\numberwithin{equation}{section}
\newcommand{\llb}{\llbracket}
\newcommand{\rrb}{\rrbracket}
\newcommand{\tn}{\vvvert}
\newcommand{\lla}{\{}
\newcommand{\rra}{\}}
\newcommand{\mcK}{\mathcal{K}}
\newcommand{\mcL}{\mathcal{L}}
\newcommand{\mcA}{\mathcal{A}}
\newcommand{\mcB}{\mathcal{B}}
\newcommand{\mcM}{\mathcal{M}}
\newcommand{\mcN}{\mathcal{N}}
\newcommand{\mcF}{\mathcal{F}}
\newcommand{\mcO}{\mathcal{O}}
\newcommand{\mcG}{\mathcal{G}}
\newcommand{\mcI}{\mathcal{I}}
\newcommand{\hatu}{\widehat{u}}
\newcommand{\hatv}{\widehat{v}}
\newcommand{\hatw}{\widehat{w}}
\newcommand{\hatg}{{\widehat{g}}}
\newcommand{\hata}{\widehat{a}}
\newcommand{\hatb}{\widehat{b}}
\newcommand{\hatn}{\widehat{n}}
\newcommand{\hatD}{\widehat{D}}
\newcommand{\hatx}{{\widehat{x}}}
\newcommand{\hatf}{\widehat{f}}
\newcommand{\hate}{\widehat{e}}
\newcommand{\hatgamma}{\widehat{\gamma}}
\newcommand{\hatomega}{\widehat{\omega}}
\newcommand{\hatnu}{\widehat{\nu}}
\newcommand{\hattau}{\widehat{\tau}}
\newcommand{\hatK}{{\widehat{K}}}
\newcommand{\hatV}{\widehat{V}}
\newcommand{\hatF}{\widehat{F}}
\newcommand{\hatG}{\widehat{G}}
\newcommand{\tildeg}{\widetilde{g}}
\newcommand{\hatnabla}{\widehat{\nabla}}
\newcommand{\hatpartial}{\widehat{\partial}}
\newcommand{\hatOmega}{\widehat{\Omega}}
\newcommand{\hatGamma}{\widehat{\Gamma}}
\newcommand{\hatmcK}{\widehat{\mcK}}
\newcommand{\ia}{i}
\newcommand{\ib}{j}
\newcommand{\Ii}{\mcI_\Omega}
\newcommand{\Iij}{\mcI_\Gamma}
\newcommand{\IR}{\mathbb{R}}
\newtheorem{lem}{Lemma}[section]
\newtheorem{ass}{Assumption}[section]
\newtheorem{thm}{Theorem}[section]
\newtheorem{rem}{Remark}[section]
\newenvironment{proof}{\noindent \newline {\bf Proof.}}
{\hfill \mbox{\fbox{} } \newline}
\newcommand{\divv}{\text{div}}
\newcommand{\map}{F}
\newcommand{\domain}{\Omega}
\newcommand{\refdomain}{\widehat{\Omega}}
\newcommand{\interface}{\Gamma}
\newcommand{\interp}{\pi}
\DeclareMathOperator{\Span}{span}
\newcommand{\glue}{\beta}
\newcommand{\stab}{\gamma}
\newcommand{\Patches}{\mcO}
\newcommand{\Interfaces}{\mcG}
\newcommand{\Fhi}{{\widehat{\mathcal{F}}_{h,i}}}
\title{\bf Cut Finite Element Methods for\\ Elliptic Problems on Multipatch\\ Parametric Surfaces\thanks{This research was supported in part by the Swedish Foundation for Strategic 
Research Grant No.\ AM13-0029, the Swedish Research Council Grant No.\ 2013-4708, and the Swedish strategic research programme eSSENCE}}
\date{}
\author{Tobias Jonsson\footnote{tobias.jonsson@umu.se}}
\author{Mats G. Larson\footnote{mats.larson@umu.se}}
\author{Karl Larsson\footnote{karl.larsson@umu.se}}
\affil{\it Department of Mathematics and Mathematical Statistics, Ume{\aa}~University, SE-901\,87 Ume{\aa}, Sweden}
\begin{document}

\maketitle

\begin{abstract}
We develop a finite element method for the Laplace--Beltrami operator  
on a surface described by a set of patchwise parametrizations. The 
patches provide a partition of the surface and each patch is the image 
by a diffeomorphism  of a subdomain of the unit square which is 
bounded by a number of smooth trim curves. A patchwise tensor product 
mesh is constructed by using a structured mesh in the reference domain. 
Since the patches are trimmed we obtain cut elements in the vicinity of 
the interfaces. We discretize the Laplace--Beltrami operator using a cut 
finite element method that utilizes Nitsche's method to enforce continuity 
at the interfaces and a consistent stabilization term to handle the 
cut elements. Several quantities in the method are conveniently 
computed in the reference domain where the mappings impose a Riemannian 
metric. We derive a priori estimates in the energy and $L^2$ norm and also 
present several numerical examples confirming our theoretical results.
\end{abstract}

\paragraph{Subject Classification Codes:} 65M60, 65M85.
\paragraph{Keywords:} cut finite elements, fictitious domain, Nitsche's method, a priori error estimates, multipatch surface, Laplace-Beltrami operator

\clearpage
\tableofcontents
\clearpage

\section{Introduction}
\paragraph{Background.}
Differential equations on surfaces appear in many applications including transport phenomena 
on surfaces and elastic membranes and shells. In engineering applications the surface geometry 
is often described using a CAD model consisting of a partition of the surface into trimmed patches 
defined by mappings from a reference domain onto the surface. When performing computations on surfaces it is beneficial to directly utilize the available parametric geometry description, in line with 
the ideas of isogeometric analysis (IGA) \cite{IGA}.

There are various techniques of enforcing interface conditions between patches, for example 
Lagrange penalty methods or methods based on weak enforcement. In the case of thin shells another approach is the bending strip method \cite{KiBaHsWuBl10}. The use of Nitsche's method \cite{Nitsche71}, or variants thereof, to weakly enforce interface conditions between patches is a well established and flexible technique, see for example \cite{ApoSch14, NitscheIGA,Langer2015,Guo2017} and the references therein. However, constructing a high quality conforming mesh on the trimmed patches is generally a difficult task. 
In this work we address the problem of conforming mesh construction by allowing the trim curve 
on each patch to arbitrarily cut the mesh by utilizing a fictitious domain method called the cut finite 
element method (CutFEM) \cite{BH12,BCHLM15}. In the same spirit \cite{RueSch13,RueSch14, KolOzc15} allow cut elements but employ the finite cell method, which is based on a different 
stabilization mechanism, where a small artificial stiffness is added on the part of the cut element 
outside of the patch it belongs to.

\paragraph{Contributions.}
We develop a general technique for consistent discretization and a framework for analysis of the 
Laplace--Beltrami operator, which serves as a model second order partial differential operator 
on a patchwise parametric surface. The patches provide a partition of the surface and each 
patch is assumed to be the image by a diffeomorphism  of a subdomain of the unit square which is bounded by a number of smooth trim curves. A patchwise tensor product mesh is constructed by 
using a structured mesh in the reference domain. Since the patches are trimmed we obtain cut 
elements in the vicinity of the interfaces. We discretize the Laplace--Beltrami operator using 
a cut finite element method that utilizes Nitsche's method to enforce continuity at the interfaces 
and a consistent stabilization term to handle the cut elements. 

Several quantities in the method are conveniently computed in the reference domain where 
the mappings impose a Riemannian metric. In particular, the stabilization term only involves 
derivatives in the reference coordinates, which is convenient since it involves higher order 
derivatives. We develop a quadrature formula for integration on the cut elements that is applicable 
to a piecewise smooth boundary. We show that the method is stable and we derive optimal order 
a priori estimates in the energy and $L^2$ norms and also present several numerical examples 
confirming our theoretical results.

Summarizing the key characteristics of the technique and framework for analysis developed in this work are:
\begin{itemize}
\item The method for computations on multipatch parametric surfaces is based on a fictitious domain method (CutFEM) which does not require the construction of conforming meshes for the trimmed patches.
\item	A stabilization term is added which allows us to perform a complete stability and error analysis independent of how the trim curves cut the computational mesh. In particular, we include proofs of 
the basic estimates related to the stabilization term in the case of higher order parametric polynomial spaces, as well as an estimate of the condition number of the stiffness matrix.

\item Both the method and the analysis are adapted to higher order elements and a quadrature rule for integration of cut higher order elements is suggested. We consider standard Lagrange elements but 
the analysis may also be applied to the spline spaces used in isogeometric analysis.
\end{itemize}

\paragraph{Outline.} In Section~\ref{sec:the-surface} we define the patchwise parametric surface, 
recall some basic facts on differential operators on surfaces, and formulate our 
model problem, in Section~\ref{sec:fem} we construct the patchwise mesh, the finite element 
spaces, formulate the finite element method, and provide some details on the implementation 
including a method for quadrature on cut elements,  
in Section~\ref{sec:error-est}  we prove stability of the method, construct an interpolation operator, 
prove a priori error estimates in the energy and $L^2$ norm and prove an upper bound for the stiffness matrix condition number. In 
Section~\ref{sec:numerics} we present numerical results confirming our theoretical results. Finally, in Section~\ref{sec:conclusions} we summarize our results and comment on possible future developments.

\section{The Surface and the Laplace--Beltrami Operator} \label{sec:the-surface}

\subsection{Piecewise Parametric Description of the Surface}
\label{sec:defsurface}
We define the surface and the piecewise parametrization as follows:
\begin{itemize}
\item Let $\domain$ be a piecewise smooth connected surface immersed in $\mathbb{R}^d,  d\geq 2$, which is not necessarily orientable.

\item For all points $x\in\overline{\Omega}$ let $\gamma_x(s)= \{y\in\Omega \, : \, \mathrm{dist}(x,y)=s\}$ define a path on $\Omega$ at a fixed distance $s$ to $x$. The length of this path is denoted $|\gamma_x(s)|$.

\item If $\Omega$ has a boundary $\partial\Omega$ it is assumed to be described by a set of smooth curves. Furthermore, for all points $x\in\partial\Omega$ we require $\lim_{s\rightarrow 0+}\frac{|\gamma_x(s)|}{s}=\pi$ which means that while the boundary may include kinks, in an intrinsic sense it is smooth.

\item For all points $x\in\Omega$ we require $\lim_{s\rightarrow 0+}\frac{|\gamma_x(s)|}{s}=2\pi$ which means that while the surface may include sharp edges, in an intrinsic sense it is smooth. More concretely the surface can have sharp edges, like the surface of a cylinder, but is not allowed to have corners, like the surface of a cube.

\item Let $\mcO = \{\Omega_\ia, \ia \in \Ii\}$ be a partition of 
$\Omega$ into a finite number of smooth subdomains $\Omega_\ia$ which we denote patches.

\item We assume that each patch
boundary $\Gamma_\ia = \partial \Omega_\ia$
is described by a uniformly bounded number of smooth curves.

\item The interfaces between the patches in $\mcO$ are described by the curves in the set $\mcG=\{ \Gamma_{\ia\ib}=\overline{\Omega}_\ia \cap \overline{\Omega}_\ib, (i,j)\in\mcI_\Gamma, i<j\}$ where $\mcI_\Gamma$ is a set of pairs of domain indices $i,j\in\mcI_\Omega$ for neighboring patches.

\item The boundary is described by the curves in the sets
$\mcB_{\partial\Omega_D}$ and $\mcB_{\partial\Omega_N}$ for the Dirichlet and Neumann parts of the boundary, respectively.

\item For each patch $\Omega_\ia$ we associate a diffeomorphism 
$F_\ia^{-1}:\Omega_\ia\rightarrow \widehat{\Omega}_\ia \subset 
I^2=[0,1]^2$ to the reference domain. We also assume that $F_\ia|_{\hatOmega_\ia}$,
is the restriction of a diffeomorphism $F_\ia:U_\delta(\hatOmega_\ia)\rightarrow F_\ia U_\delta(\hatOmega_\ia)$, where  
$U_\delta(\hatOmega_\ia) = \{\hatx \in I^2: d(\hatx,\hatOmega_\ia)<\delta\}$ and $d(x,y) = \| x - y \|_{\IR^2}$ is the usual Euclidean distance function.

\item To be able to evaluate functions $u \in H^{p+1}(\Omega)$ on $F_\ia U_\delta(\hatOmega_\ia)$  we for patches $\hatOmega_\ia$ with a smooth interface further assume $F_\ia U_\delta(\hatOmega_\ia)
\subset \Omega$ while we for patches with a sharp interface define an $H^{p+1}$-extension $E^* u$ of $u$ on $F_\ia U_\delta(\hatOmega_\ia)$ which is possible as $\partial\Omega_i$ in the sharp interface case is a closed smooth curve.

\item Given $x\in \Omega_\ia$ we denote the corresponding 
point $F_\ia^{-1} x \in {\hatOmega}_\ia$ by $\hatx$ and 
given a subset $\omega \subset \Omega_\ia$ we let 
$\hatomega = F_\ia (\omega)$. For 
a function $v:F_\ia U_\delta(\hatOmega_\ia) \rightarrow \IR$ we 
let $\hatv:U_\delta(\hatOmega_\ia) \rightarrow \IR$ 
denote the pullback $\hatv = v \circ F_\ia$ and for a function 
$\hatv:U_\delta(\hatOmega_\ia) \rightarrow \IR$ we let 
$v:F_\ia U_\delta(\hatOmega_\ia) \rightarrow \IR$ denote the push 
forward such that $v\circ F_\ia=\hatv$. Note that the pullback is 
indeed defined on the slightly larger domain $U_\delta(\hatOmega_\ia)$. 
This property will be convenient when we construct an interpolation operator. 
\end{itemize}

This surface description and notation are illustrated in Figure~\ref{fig:patch-mapping}.

\begin{figure}
\centering
\includegraphics[width=0.8\linewidth]{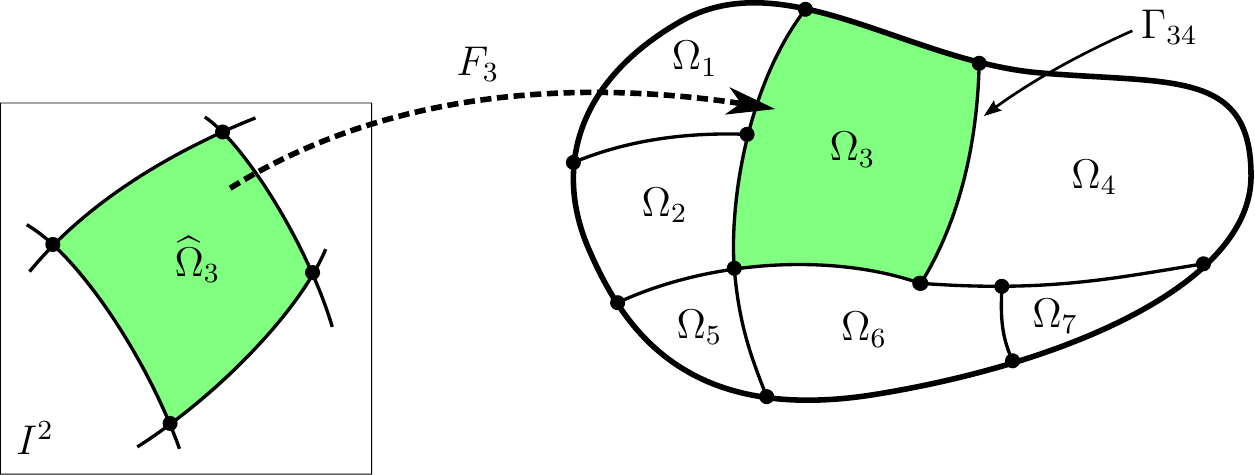}
\caption{Representation of the surface via patchwise parametrizations.}
\label{fig:patch-mapping}
\end{figure}

\subsection{Riemannian Metric on the Reference Patches}
\paragraph{Tangent Spaces.}
At each point $\hatx \in \hatOmega_\ia$ we let 
$T_{\hatx}(\hatOmega_\ia)$ denote the tangent space of 
$\hatOmega_\ia$ and we let $\{\hate_1,\hate_2\}$ be a fixed orthonormal 
basis for $T_{\hatx}(\hatOmega_\ia)$, i.e. the same basis is used 
independent of $\hatx$. At each point $x \in \domain_{\ia}$ we locally
define the tangent space to $\Omega_\ia$ as
\begin{align}
T_{x}(\domain_\ia) 
= \Span\{\hatpartial_1 F_{\ia},\hatpartial_2 F_{\ia}\}
\end{align}
where $\hatpartial_k$ is the partial derivative in the 
$\hate_k$ direction and $\hatpartial_k F_{\ia} 
= \hatpartial_k F_{\ia}|_{x}$ . Thus, any tangent vector $a \in T_{x}(\domain_i)$ 
can be written
\begin{align}
a = \hata_1 \hatpartial_1 F_{\ia} + \hata_2 \hatpartial_2 F_\ia 
= \begin{bmatrix} \hatpartial_1 F_\ia &  \hatpartial_2 F_\ia 
\end{bmatrix}
\begin{bmatrix}
\hata_1 \\ \hata_2
\end{bmatrix}
= \hatD F_\ia \hata
\label{eq:goidjn}
\end{align}
where $\hata=[\hata_1,\hata_2]^T \in T_{\hatx}(\hatOmega_\ia)$ and we introduced the 
notation 
\begin{equation}
\hatD F_\ia = F_\ia \otimes \hatnabla = \begin{bmatrix} \hatpartial_1 F_\ia &  \hatpartial_2 F_\ia 
\end{bmatrix}
\end{equation}

\paragraph{Riemannian Metric.}
We equip $T_x(\domain_\ia)$ with the Euclidean inner product "$\cdot$" in $\mathbb{R}^d$, i.e. the inner product of the immersing space, and we define the induced inner product $\hatg_{\hatx}$ on $T_{\hatx}(\refdomain_{\ia})$ as follows
\begin{align}
\hatg_{\hatx}(\hata,\hatb) = a \cdot b
\end{align}
Introducing the symmetric positive definite matrix $\hatG$ with components $\hatg_{kl} = \hatpartial_k F_\ia \cdot \hatpartial_l F_\ia$  the inner 
product $\hatg_{\hatx}(\hata,\hatb)$ can be written
\begin{align}
\hatg_{\hatx}(\hata,\hatb) =
\begin{bmatrix}
\hata_1 & \hata_2
\end{bmatrix}
\begin{bmatrix} \hatg_{11} & \hatg_{12}\\ \hatg_{21} & \hatg_{22} \end{bmatrix} 
\begin{bmatrix}
\hatb_1 \\ \hatb_2
\end{bmatrix}
= \hata^T \hatG \hatb
\end{align}
This inner product is a Riemannian metric on $\hatOmega_\ia$ 
and $\hatG$ is called the metric tensor. We denote the norm on 
$T_{\hatx}(\hatOmega_\ia)$ induced by the Riemannian metric by
\begin{align}
\| \hatv \|_{\hatg_{\hatx}}^2 = \hatg_{\hatx}(\hatv,\hatv)
\end{align}
We note that $\hatD F_i :T_{\hatx}(\hatOmega_\ia) \ni \hata 
\mapsto a \in
T_{x}(\Omega_\ia)$ is an isometry since norms and angles are 
preserved
\begin{align}
\| a \|_{\mathbb{R}^d}^2=
a \cdot a = \hatg_{\hatx}(\hata,\hata)=\| \hata \|_{\hatg_{\hatx}}^2
\end{align}
and if $\theta$ ($\widehat{\theta}$) is the angle between $a$ and $b$ 
($\hata$ and $\hatb$) we have 
 \begin{align}
\cos \theta  = \frac{a \cdot b}{\|a\|_{\mathbb{R}^d} \| b \|_{\mathbb{R}^d}} 
&= \frac{\hatg(\hata,\hatb)}{ \| \hata \|_{\hatg_\hatx} \| \hatb \|_{\hatg_\hatx}} 
= \cos \widehat{\theta}
\end{align}
We note that given $a \in T_{x}(\Omega_i)$ we find the corresponding $\hata \in T_\hatx(\hatOmega_i)$ using  the 
relation 
\begin{equation}
\hata = \hatG^{-1} \hatD F_\ia^T a
\end{equation}
since we have the identity  
\begin{equation} 
\hatg_{\hatx}(\hata,\hatb) 
= a \cdot b 
= a \cdot (\hatD F_\ia \hatb) 
=(\hatD F_\ia^T a) \cdot \hatb\qquad \forall \hatb \in T_{\hatx}(\hatOmega_\ia) 
\end{equation}
and thus we conclude that $\hatG \hata = \hatD F_\ia^T a$.  

Note that we have a uniform bound on the eigenvalues of $\hatG$, i.e. there exists constants $0 < c < C$ such that
\begin{equation}
c  \leq \lambda_\mathrm{min}( \hatG )
\leq \lambda_\mathrm{max}( \hatG ) \leq C
\qquad \forall \hatx \in \hatOmega_\ia
\end{equation}
and as a consequence
\begin{equation}
c \| \hata \|^2_{\hatg_{\hatx}} \leq \| \hata \|_{\IR^2}^2 \leq C \| \hata \|^2_{\hatg_{\hatx}}
\qquad \hatx \in \hatOmega_\ia, \ \hata \in T_{\hatx}(\hatOmega_\ia)  
\end{equation}

We will use the notation $T(\omega) = \sqcup_{x \in \omega} T_x(\omega)$  for the tangent bundle over a subset $\omega\subset\Omega$, which is the collection of all the 
tangent vector spaces at the points in $\omega$.
When there is no possibility of confusion we will use the simplified notation $g=g_x$, $\hatg=\hatg_\hatx$ and correspondingly for the induced norms.

\subsection[Integration and $L^2$ Inner Products]{Integration and \texorpdfstring{${\boldsymbol L}^{\boldsymbol 2}$}{L2} Inner Products} \label{section:L2innerprod}
\paragraph{Integration.}
The integral over $\omega\subset \Omega_\ia$ is defined  by
\begin{align}
\int_\omega f dx &= \int_{\hatomega} \hatf \ |\hatG|^{1/2} \, d\widehat{x}
\end{align}
where $|\hatG| = |\text{det}(\hatG)|$.  

Let $\hatgamma$ be a curve in $U_\delta(\hatOmega_\ia)$ 
and let $(0,l) \ni s \mapsto \hatgamma(s) \in U_\delta(\hatOmega_\ia)$ 
be an arclength parametrization and  $d\hatgamma$ the arclength 
measure. We define the integral over the curve 
$\gamma = F_\ia \circ \hatgamma \subset \Omega_\ia$ as 
follows
\begin{equation}
\int_\gamma f d\gamma 
= \int_{\hatgamma} \hatf \|\widehat{\tau}\|_{\hatg} d\hatgamma
= \int_0^l \hatf \circ \hatgamma(s) \|\widehat{\tau}\circ \hat{\gamma}(s)\|_{\hatg_{\hatgamma(s)} } ds
\end{equation}
where $\widehat{\tau} = \frac{d\hatgamma}{ds}$ is the unit tangent vector to $\hatgamma$ with 
respect to the Euclidean $\IR^2$ inner product.

\begin{rem} In Appendix \ref{appendix:operators} we provide some more details on the definition 
of the integrals and also discuss the extension to higher dimensions.
\end{rem}

\paragraph{Inner Products.}
We let $(\cdot,\cdot)_{\omega}$ denote the $L^2(\omega)$ inner product 
\begin{align}
(v,w)_{{\omega}} &=
\left\{
\begin{alignedat}{2}
&\int_{{\omega}} v w  \, dx 
= 
\int_{{\widehat{\omega}}} \hatv \hatw  \, |\hatG|^{1/2}d\hatx
\qquad &&\text{for $v,w: {\omega} \rightarrow \mathbb{R}$}
\\
&\int_{{\omega}} v \cdot w \, dx 
=\int_{{\widehat{\omega}}} \hatg(\hatv,\hatw)  \, |\hatG|^{1/2} d\hatx
\qquad &&\text{for $v,w: {\omega} \rightarrow T(\omega)$}
\end{alignedat}
\right.
\label{eq:L2prod}
\end{align}
and for curves $\gamma$ we have analogous definitions where the integrals 
and measures are replaced by integrals over the curve and the appropriate measures.

\subsection{Differential Operators in Reference Coordinates} \label{section:diffoper}
Here we introduce some differential operators and formulate 
Green's formula. In Appendix \ref{appendix:operators} we also derive these 
expressions including Green's formula using basic calculus.

\paragraph{The Gradient.}
Let $\hatnabla$ be the tangential gradient operator in reference 
coordinates
\begin{equation}
\widehat{\nabla}=\begin{bmatrix} \hatpartial_1 \\ \hatpartial_2 \end{bmatrix}
\end{equation}
The tangential gradient $\nabla u \in T_p(\Omega_i)$ is 
represented in terms of reference coordinates 
\begin{equation}
\nabla u = \hatD F_\ia \widehat{\nabla u}
\end{equation}
Using the chain rule we obtain the identities
\begin{align}
\hatnabla \hatu 
= \hatnabla (u \circ F_\ia) 
&=
(\hatD F_\ia)^T  \nabla u
=
(\hatD F_\ia)^T  \hatD F_\ia \widehat{\nabla u} 
= \hatG \widehat{\nabla u} 
\end{align}
and thus we conclude that the the gradient representation in reference coordinates is
\begin{align}\label{eq:gradient-local}
\widehat{\nabla u}
&= \hatG^{-1} \widehat{\nabla} \hatu
\end{align}

\paragraph{The Divergence.} The divergence operator 
$\divv$ on $\Omega$ is defined by the identity
\begin{equation}\label{def:divergence}
(\divv v, w)_\Omega = -(v,\nabla w)_\Omega
\end{equation}
for $w \in C^\infty_0(\omega)$,
and may be expressed in reference coordinates as follows
\begin{equation}
\widehat{\divv v} = |\hatG|^{-1/2} \hatnabla \cdot \Big( |\hatG|^{1/2} \hatv \Big)
\end{equation}

\paragraph{The Laplace--Beltrami Operator.} We define 
the Laplace--Beltrami operator $\Delta$ on the surface $\Omega$ by
\begin{equation}\label{def:Laplace-Beltrami}
\Delta v = \divv (\nabla v)%
\end{equation}
which in reference coordinates is given by the identity 
\begin{equation}\label{def:Laplace-Beltrami-local}
\widehat{\Delta v} = |\hatG|^{-1/2} \hatnabla \cdot (|\hatG|^{1/2} \hatG^{-1} \hatnabla \hatv )
\end{equation}

\paragraph{Green's Formula.} Green's formula on $\omega \subset \Omega_\ia$ takes the form
\begin{align}\label{eq:Greens-formula}
-(\Delta v,w)_\omega = (\nabla v,\nabla w)_\omega - (\nabla v, n w)_{\partial \omega}
\end{align}
where $n \in T_p(\Omega_\ia)$ is the exterior unit normal to the curve 
$\partial \omega$.

\subsection{Sobolev Spaces}

In the reference coordinates we let $H^k(\hatOmega_\ia)$ denote 
the usual Sobolev spaces of order $k$ with inner product and norm
\begin{equation}
(\hatv,\hatw)_{H^k(\hatOmega_\ia)} = \sum_{|a|\leq k} (\hatD^a \hatv,\hatD^a \hatw)_{L^2(\hatOmega_\ia)},\qquad 
\|\hatv\|^2_{H^s(\hatOmega_\ia)}=(\hatv,\hatv)_{H^k(\hatOmega_\ia)}
\end{equation}
On the surface we define the corresponding spaces of functions $v$ that 
are liftings of functions $\hatv \in H^k(\hatOmega_\ia)$, 
\begin{equation}
H^k(\Omega_\ia) 
= H^k(\hatOmega_\ia)  \circ F_\ia^{-1}
= \{v: \Omega \rightarrow \IR \,|\, v = \hatv \circ F_\ia^{-1} ,  
\hatv \in H^k(\hatOmega_\ia) \}  
\end{equation}
with inner product and norm
\begin{equation}
(v,w)_{H^k(\Omega_\ia)} = (\hatv,\hatw)_{H^k(\hatOmega_\ia)},
\qquad 
\| v \|_{H^k(\Omega_\ia)}  = \| \hatv \|_{H^k(\hatOmega_\ia)}  
\end{equation}
We employ standard notation $L^2(\omega)=H^0(\omega)$ and $\| \cdot \|_{L^2(\omega)}=\| \cdot \|_\omega$.

\subsection{The Laplace--Beltrami Interface Problem}

\begin{figure}
\centering
\begin{subfigure}[b]{0.47\textwidth}\centering
\qquad
\includegraphics[width=0.8\linewidth]{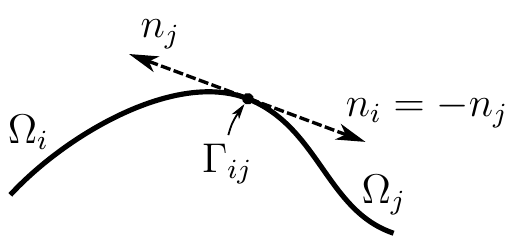}
\caption{Smooth interface}
        \label{fig:nsmooth}
\end{subfigure}
\quad
\begin{subfigure}[b]{0.47\textwidth}\centering
\includegraphics[width=0.65\linewidth]{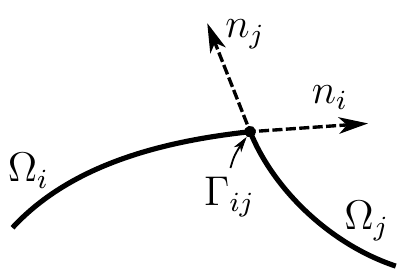}
\caption{Sharp interface}
        \label{fig:nsharp}
\end{subfigure}
\caption{Cross sections over a smooth (a) and sharp (b) patch interface $\Gamma_{ij}$ with exterior unit normals to the patch boundaries indicated.}
\label{fig:n}
\end{figure}

Let $n_i\in T_x(\Omega_i)$ be the outward pointing normal to the patch boundary $\partial \Omega_i$ as illustrated over a patch interface $\Gamma_{ij}$ in Figure~\ref{fig:n}.
We formulate our model problem for a surface without boundary:
Given $f$ such that $(f,1)_\Omega=0$, 
find $u:\Omega \rightarrow \IR$, with $(u,1)_\Omega=0$, 
such that
\begin{subequations}\label{eq:lb}
\begin{alignat}{2}
-\Delta u &= f
\qquad &&\text{in all $\Omega_\ia \in \Patches$}
\label{eq:ibdfn}
\\
\left\llb u\right\rrb  &= 0 
\qquad &&\text{on all $\Gamma_{\ia\ib} \in \Interfaces$}
\label{eq:djngj1}
\\
{n}_{\ia} \cdot \nabla u_\ia +{n}_{\ib} \cdot \nabla u_\ib  &= 0
\qquad &&\text{on all $\Gamma_{\ia\ib} \in \Interfaces$}
\label{eq:djngj2}
\end{alignat}
\end{subequations}
where the jump operator is defined by
\begin{equation}
\llb w\rrb \big|_{\interface_{\ia\ib}} = w_\ia-w_\ib
\end{equation}

\begin{rem}
We pose our model problem on a surface without boundary to simplify the presentation in the analysis. However, we comment in Section~\ref{sec:method} on how the method is easily adapted to boundary conditions and in Section~\ref{sec:numerics} we also present a numerical example with boundary conditions.
\end{rem}

\begin{rem} We use the interface formulation since we have parametric 
mappings defined on the partition $\mcO$ of $\Omega$ in contrast 
to the standard manifold description which is based on a partition of 
unity and compatibility conditions between the local parametrizations.
\end{rem}
Let $V$ be the Hilbert space
\begin{equation}\label{eq:spaceV}
V = \Bigl\{ \, v\in \bigoplus_{i\in\mcI_\Omega} H^1(\Omega_\ia) \ :\ \text{$\left\llb v \right\rrb = 0$ on $\Gamma_{ij}$, $\forall(i,j)\in\mcI_\Gamma$ \ ; \ 
$\int_\Omega v \, dx = 0$} \, \Bigr\}
\end{equation}
Then we have following weak formulation of (\ref{eq:lb}), 
\begin{align}
(f,v)_{\Omega} &= \sum_{\ia\in\Ii} (-\Delta u,v)_{\Omega_\ia} 
\\
&=  \sum_{\ia\in\Ii} (\nabla u,\nabla v)_{\Omega_\ia} - (n\cdot \nabla u, v)_{\partial \Omega_\ia} 
\\
& =  \sum_{\ia\in\Ii} (\nabla u,\nabla v)_{\Omega_\ia} 
- \ \sum_{\mathclap{(i,j)\in\Iij}}\ (n_\ia \cdot \nabla u_\ia + n_\ib \cdot \nabla u_\ib , v)_{\Gamma_{\ia\ib}} 
\\
& =  \sum_{\ia\in\Ii} (\nabla u,\nabla v)_{\Omega_\ia} 
\end{align}
for all $v \in V$. It then follows from the Lax--Milgram lemma that 
(\ref{eq:lb}) has a unique solution in $V$ for $f\in V'$, the dual of $V$, 
such that $\int_\Omega f \, dx = 0$. Furthermore, we also have the 
elliptic regularity result
\begin{equation}\label{eq:ellreg}
\sum_{\ia\in\Ii} \|v\|_{H^{s+2}(\Omega_\ia)} \lesssim \sum_{\ia\in\Ii} \|f\|_{H^{s}(\Omega_\ia)}
\end{equation}

\section{The Finite Element Method} \label{sec:fem}

\subsection{Construction of the Mesh}
Let $\widehat{\mcK}_{h,i,0}$ be a uniform 
structured tensor product mesh on the unit 
square $I^2 =[0,1]^2$ consisting of elements 
$\hatK$ with mesh size $h$. For each patch 
$\domain_\ia$ we define the active 
background mesh in the reference domain 
as
\begin{equation}
\widehat{\mcK}_{h,\ia} = \{ \hatK \in \widehat{\mcK}_{h,i,0} \, 
:\, \hatK \cap \refdomain_\ia \neq  \emptyset \}
\end{equation}
and the corresponding mesh on the surface is 
obtained by 
\begin{equation}
\mcK_{h,\ia} = \map_i( \widehat{\mcK}_{h,\ia})
= \{\map_i(\widehat{K}) : \widehat{K} \in \widehat{\mcK}_{h,\ia} \}
\end{equation}
Let $\widehat{\mcF}_{h,\ia}$ be the set of interior faces 
belonging to elements in $\widehat{\mcK}_{h,\ia}$ that 
intersects the boundary $\partial \refdomain_\ia$. The mesh and the set of edges $\widehat{\mcF}_{h,\ia}$ are illustrated in Figure~\ref{fig:meshes}. Finally, the collection of meshes 
\begin{equation}
\mcK_h = \{\mcK_{h,\ia}\, : \, \ia \in \Ii \}
\end{equation} 
provides a mesh on the surface with cut elements in the vicinity 
of the interfaces.

\begin{figure}[tb]
\centering
\begin{subfigure}[b]{0.38\textwidth}\centering
\includegraphics[width=0.85\linewidth]{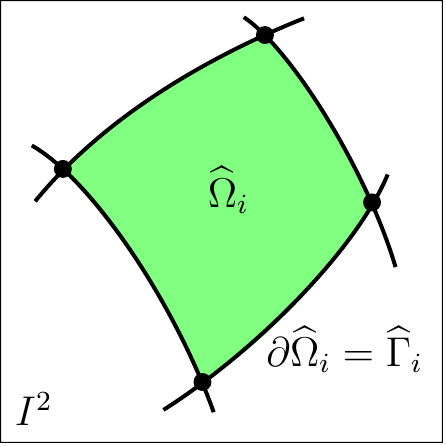}
\caption{Patch in reference domain}
        \label{fig:ref-patch}
\end{subfigure}
\begin{subfigure}[b]{0.38\textwidth}\centering
\includegraphics[width=0.85\linewidth]{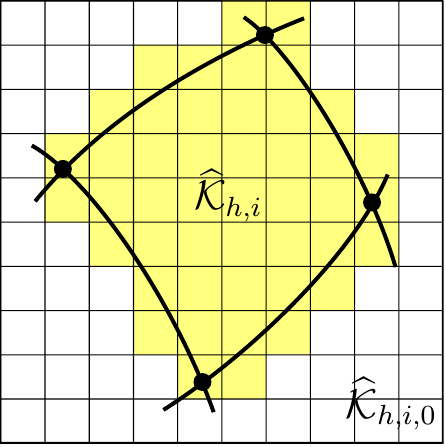}
\caption{Mesh in reference domain}
        \label{fig:ref-mesh}
\end{subfigure}

\vspace{1.0em}
\begin{subfigure}[b]{0.38\textwidth}\centering
\includegraphics[width=\linewidth]{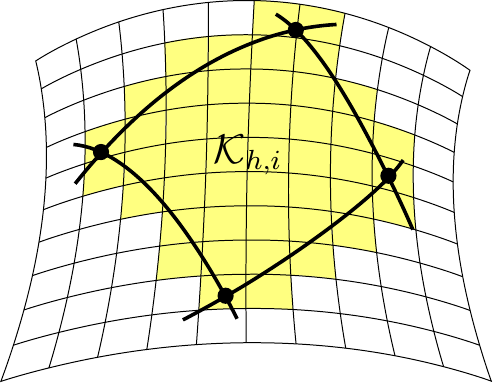}
\caption{Mesh in physical domain}
        \label{fig:phys-active}
\end{subfigure}
\begin{subfigure}[b]{0.38\textwidth}\centering
\includegraphics[width=0.85\linewidth]{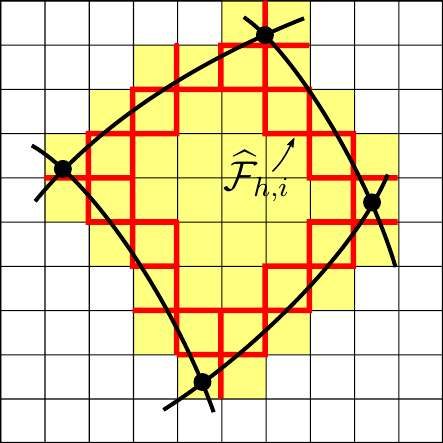}
\caption{Set of edges $\widehat{\mcF}_{h,i}$}
        \label{fig:ref-stab}
\end{subfigure}
\caption{Illustration of meshes for a patch $\Omega_i$.}
\label{fig:meshes}
\end{figure}

\subsection{The Finite Element Spaces}
Let $\widehat{V}_{h,0}$ be a finite element space on $I^2=[0,1]^2$ of continuous piecewise tensor product polynomials of order $p$ defined on the mesh $\widehat{\mcK}_{h,i,0}$. We define the spaces
\begin{align}
\widehat{V}_{h,\ia} &= \widehat{V}_{h,0}|_{\widehat{\mcK}_{h,\ia}}
\\
V_{h,i} &= \widehat{V}_{h,i} \circ F_i^{-1} = 
\{ v: \domain_i \rightarrow \mathbb{R} 
: v = \widehat{v}\circ \map_i^{-1}, \, \widehat{v} \in \widehat{V}_{h,i} \}
\\
V_h &= \bigoplus_{\ia\in \Ii} V_{h,\ia}
\end{align}
and we note that the functions in $V_h$ are discontinuous across the interfaces $\Gamma_{ij}\in\mcG$.

\begin{rem} Since each patch boundary $\widehat{\Gamma}_i = \partial\widehat{\Omega}_i$ can arbitrarily cut the mesh $\widehat{\mcK}_{h,i,0}$ we typically get cut elements on both sides of an interface and on the boundary.
\end{rem}

\subsection{The Method} \label{sec:method}

Using the formulation in \cite{flera-ytor} we introduce the Nitsche bilinear form $a_h$ given by
\begin{align}
a_h(v,w) &= \sum_{\ia\in\Ii} (\nabla v, \nabla w)_{\Omega_\ia}
\\ \nonumber
&\qquad -\sum_{(i,j)\in\Iij} 
(\{ n\cdot \nabla v \},\llb w\rrb )_{\Gamma_{\ia\ib}} + (\llb v\rrb,\{ n\cdot \nabla w \})_{\Gamma_{\ia\ib}}
\\ \nonumber
&\qquad + \sum_{(i,j)\in\Iij} \glue h^{-1}(\llb v \rrb,\llb w\rrb)_{\Gamma_{\ia\ib}}
\end{align}
where $\glue$ is a positive parameter, and the linear functional $l$ is defined
\begin{align}
l(w) &= (f,w)_\Omega
\end{align}
The finite element method takes the form: find $u_h \in V_h/\IR$ such that
\begin{equation}
A_h(u_h,v) = l(v)\quad \forall v \in V_h
\label{eq:method}
\end{equation}
where the bilinear form $A_h$ is defined
\begin{align}
A_h(v,w) &= a_h(v,w) + j_h(v,w)
\end{align}
The stabilization form $j_h$ is defined
\begin{align}
j_h(v,w) = \sum_{\ia\in\Ii} j_{h,i}(\widehat{v},\widehat{w})
\end{align}
where $j_{h,i}$ is short for $j_{h,\Fhi}$ which is defined
\begin{equation}\label{eq:stab-F}
j_{h,\widehat{\mcF}}(\hatv,\hatw) = \sum_{k=1}^p \gamma_k h^{2k-1} (\llb \widehat{D}^k_n \widehat{v}\rrb ,\llb \widehat{D}^k_n \widehat{w}\rrb )_{\widehat{\mcF}}
\end{equation}
where $\widehat{\mcF}$ is a set of faces, $\{\gamma_k\}_{k=1}^p$ are positive parameters, and $\widehat{D}^k_n v$ on a face $\widehat{F}\in\widehat{\mcF}$ is the $k$:th derivative in the face normal direction to $\widehat{F}$ with respect to the Euclidean $\IR^2$ inner product.
We recall the definition of the jump and define the normal flux average over interfaces
\begin{equation}
\llb v \rrb = v_\ia - v_\ib, 
\qquad 
\lla n\cdot \nabla v \rra = (n_\ia \cdot \nabla v_\ia 
- n_\ib \cdot \nabla v_\ib)/2
\end{equation}
In the context of unfitted finite elements the stabilization form \eqref{eq:stab-F} was first analysed for linear elements in \cite{BH12} and extended to higher order elements in \cite{MassingLarsonLoggEtAl2013a}.

Note that, for $u|_{\Omega_i} \in H^{p+1}(\Omega_i)$, the finite element method (\ref{eq:method}) is consistent
and thus the error  $u - u_h$ satisfies the Galerkin orthogonality  
\begin{equation}\label{eq:galort}
A_h(u-u_h,v) = 0 \qquad \forall v \in V_h
\end{equation}

\begin{rem}[Penalty Parameter] The parameter $\glue$ is chosen large enough as in standard Nitsche type methods and suitable choices of the parameters 
$\{\gamma_k\}_{k=1}^p$ are provided in Section~\ref{sec:numerics} below. 
\end{rem}

\begin{rem}[Adaptation to Boundary Conditions]
While formulated above for a surface without boundary  the method \eqref{eq:method} is easily adapted to boundary conditions. For non-homogeneous Dirichlet and Neumann boundary conditions
\begin{alignat}{2}
u &= f_D \qquad &&\text{on $\partial\Omega_D$}
\\
n \cdot \nabla u &= f_N \qquad &&\text{on $\partial\Omega_N$}
\end{alignat}
where $\partial\Omega_D\cup\partial\Omega_D=\partial\Omega$ and $\partial\Omega_D\cap\partial\Omega_D=\emptyset$ we introduce the modified Nitsche forms
\begin{align}
\widetilde{A}_h(v,w) &= A_h(v,w)
+\sum_{\Gamma\in\mcB_{\partial\Omega_D}} 
\glue h^{-1}( v , w )_{\Gamma}
- ( n\cdot \nabla v , w )_{\Gamma}
- ( v , n\cdot \nabla w )_{\Gamma}
\\
\widetilde{l}(w)&= l(w)
+ \sum_{\Gamma\in\mcB_{\partial\Omega_N}}  
( f_N , w )_{\Gamma}
+ \sum_{\Gamma\in\mcB_{\partial\Omega_D}} 
( f_D , \glue h^{-1} w - n\cdot \nabla w )_{\Gamma}
\end{align}
and the resulting method reads: find $u_h\in V_h$ such that
\begin{equation}
\widetilde{A}_h(u_h,v) = \widetilde{l}(v)\quad \forall v \in V_h
\label{eq:method-bdry}
\end{equation}
Note that for a problem with Dirichlet boundary we no longer need $(u_h,1)_\Omega=(f,1)_\Omega=0$.
\end{rem}

\begin{rem}[Adaptation to Convection--Diffusion]
The method is also easily extended to cover convection--diffusion operators such as
\begin{align}
\mcL u = -\epsilon\Delta u + b\cdot\nabla u
\end{align}
where $\epsilon$ is a constant and $b|_{\Omega_i} = b_i :\Omega_i\rightarrow T_p(\Omega_i)$ is a tangential vector field.
In this case we get some additional terms and the bilinear form reads
\begin{align}
a_h(v,w) &= \sum_{\ia\in\Ii} \epsilon(\nabla v, \nabla w)_{\Omega_\ia}
+ (b\cdot\nabla v,w)_{\Omega_\ia}
\\ \nonumber
&\qquad -\sum_{(i,j)\in\Iij} 
\epsilon(\{ n\cdot \nabla v \},\llb w\rrb )_{\Gamma_{\ia\ib}}
+
\left( 2\{ n \cdot b v \}  , \{w\} + \gamma  \llb w \rrb \right)_{\Gamma_{\ia\ib}}
\\ \nonumber
&\qquad + \sum_{(i,j)\in\Iij} \glue \epsilon h^{-1} \left( \llb v \rrb,\llb w\rrb\right)_{\Gamma_{\ia\ib}}
\end{align}
where $\gamma$ is an upwind parameter and $\{\cdot\}$ for the non-flux terms is the usual average $\{w\}=\frac{w_i + w_j}{2}$.
Note that we require the vector field $b$ to be consistent across interfaces in the sense that $n_i\cdot b_i + n_j\cdot b_j = 0$ on $\Gamma_{ij}$.
\end{rem}

\subsection{Formulation in Reference Coordinates} 
In order to assemble the load vector and stiffness matrix we have the 
following expressions in reference coordinates
\begin{align}
( f , v )_{\Omega_\ia}
&=
\int_{\widehat{\Omega}_\ia} \widehat{f} \widehat{v} |\hatG|^{1/2} \, d\widehat{x}
\\
(\nabla v, \nabla w)_{\Omega_\ia}
&=
\int_{\hatOmega_\ia} \hatg(\hatG^{-1}\hatnabla \hatv, 
\hatG^{-1} \hatnabla \hatw)_{\hatOmega_\ia} |\hatG|^{1/2} \, d\widehat{x}
\\
&= \int_{\hatOmega_\ia} \hatnabla \hatv \cdot \hatG^{-1} 
\hatnabla \hatw  |\hatG|^{1/2} \, d\widehat{x}
\end{align}
For the interface terms on $\Gamma_{\ia \ib}$ we note that 
$p_{\ia\ib} = F_\ib^{-1} F_{\ia} : \hatGamma_{\ia\ib}^\ia 
\rightarrow  \hatGamma_{\ia\ib}^\ib$ is a bijection. Here we 
introduced the notation $\hatGamma_{\ia \ib}^\ia = F_\ia^{-1} (\Gamma_{\ia\ib})$. We may use $p_{\ia \ib}$ to pull back 
values from $\hatGamma_{\ia\ib}^\ib$ on $\hatGamma_{\ia\ib}^\ia$,
i.e. fetch values from the reference representation of $\Gamma_{\ia\ib}$ on $\hatOmega_j$
based on coordinates in the reference representation of $\Gamma_{\ia\ib}$ on $\hatOmega_i$.
We obtain the identity
\begin{align}
 \int_{\Gamma_{\ia \ib}} \llb v \rrb \llb w \rrb \, d\gamma
&= 
 \int_{\Gamma_{\ia \ib}} (v_\ia - v_\ib) (w_\ia - w_\ib ) \, d\gamma
 \\ \label{eq:interface-terms-a}
&= 
 \int_{\hatGamma_{\ia \ib}^i} ( \hatv_\ia - \hatv_\ib\circ p_{\ia\ib}^{-1}) (\hatw_\ia - \hatw_\ib \circ p_{\ia\ib}^{-1})  \|\hattau_\ia \|_{\hatg} \, d\hatgamma
\end{align}
and for the other interface term we get
\begin{align}\nonumber
&\int_{\Gamma_{\ia\ib}} \{ n\cdot \nabla v \} \llb w \rrb \, d\gamma
\\
&\qquad = \int_{\Gamma_{\ia\ib}} 
2^{-1}(n_\ia \cdot \nabla v_\ia - n_\ib \cdot \nabla v_\ib)(w_\ia - w_\ib) \, d\gamma
\\ \label{eq:interface-terms-b}
&\qquad=
\int_{\hatGamma_{\ia\ib}^i} 
2^{-1}(\hatg(\hatn_\ia, \widehat{\nabla v_\ia}) 
- \hatg(\hatn_\ib, \widehat{\nabla v_\ib})\circ p_{\ia\ib}^{-1}  ) (\hatw_\ia -\hatw_\ib\circ p_{\ia\ib}^{-1}) \|\hattau_\ia \|_{\hatg} \, d \hatgamma
\end{align}
where
\begin{align}\label{eq:local-a}
\hatg(\hatn_\ia, \widehat{\nabla v_\ia})
= \|\hatG^{-1} \hatnu_\ia\|_{\hatg}^{-1}
\hatnu_\ia \cdot \hatG^{-1}  \hatnabla \hatv_\ia
\end{align}
and $\hatnu_\ia$ is the unit normal to $\hattau_\ia$ with respect to 
the Euclidean $\IR^2$ inner product in $T(\hatOmega_\ia)$. 
To verify 
(\ref{eq:local-a}) we used the identity (\ref{eq:gradient-local})  for the gradient 
$\widehat{ \nabla v}_\ia = \hatG^{-1} \hatnabla \hatv_\ia$, and  the fact that the reference 
coordinates $\hatn_\ia$ of the normal $n_\ia$ may be expressed in 
terms of $\hatnu_\ia$,
\begin{equation}\label{eq:normal-coordinates}
\hatn_\ia = \frac{\hatG^{-1} \hatnu_\ia}{\| \hatG^{-1} \hatnu_\ia \|_{\hatg}} 
\end{equation}
which follows from the fact that $\frac{\hatG^{-1} \hatnu}{\| \hatG^{-1} \hatnu \|_{\hatg}}$ is a unit vector with respect to the metric inner product $\hatg$ 
which is also $\hatg-$orthogonal to the tangent vector $\hattau$,  
\begin{equation}
\hatg(\hatG^{-1}\hatnu,\hattau) = \hatnu \cdot \hattau = 0
\end{equation}

\subsection{Quadrature}

\paragraph{Quadrature on Cut Elements.}
To compute the terms implied from the above forms we generate a quadrature scheme for evaluation of integrals in the reference patches on the form
\begin{align}
\int_{\widehat{\Omega}_\ia \cap \hatK} f(\widehat{x}_1,\widehat{x}_2) \, d\widehat{x}
\label{eq:exint}
\end{align}
where the domain of integration is the intersection between a reference patch $\widehat{\Omega}_\ia$ and a finite element $\hatK$, and the integrand $f$ stems from tensor product polynomials of arbitrary order.
We denote this intersection
$\widehat{\omega} = \widehat{\Omega}_\ia \cap \hatK$
 and assume its boundary can be described by the union of $N$ non-overlapping curves $\widehat{\gamma}_k:[0,1]\rightarrow\mathbb{R}^2$, i.e.
\begin{align}
\partial\widehat{\omega} = \bigcup_{k=1}^N \widehat{\gamma}_k
\end{align}
where $\widehat{\gamma}_k(s) = \begin{bmatrix} \widehat{\gamma}_{k,1}(s) & \widehat{\gamma}_{k,2}(s) \end{bmatrix}^T$ is parametrized such that $\|\widehat{\gamma}_k'(s)\|_{\mathbb{R}^2} \neq 0$ and that $\widehat{\gamma}_k(s)$ in the positive direction traverses $\partial\widehat{\omega}$ counter-clockwise. Thus, the exterior unit normal to $\widehat{\omega}$, with respect to the $\mathbb{R}^2$ inner product, may be expressed
\begin{align}
\widehat{\nu} = \frac{1}{ \| \widehat{\gamma}_k' \|_{\mathbb{R}^2} } \begin{bmatrix}\widehat{\gamma}_{k,2}' \\ -\widehat{\gamma}_{k,1}' \end{bmatrix}
\label{eq:normal2d_mani}
\end{align}
We define a vector field
\begin{align} \label{eq:quadvecfield}
\widehat{\phi}: \widehat{\omega}\ni
\begin{bmatrix} \hatx_1 \\ \hatx_2 \end{bmatrix}
\rightarrow \begin{bmatrix} 0 \\ \int_a^{\widehat{x}_2} f(\widehat{x}_1,q) \, dq \end{bmatrix} \in \IR^2
\end{align}
where $a$ is an arbitrary constant which we choose as $a=\min_{\widehat{x}_2 \in \hatK} \widehat{x}_2$ and
note that we can express the integrand in \eqref{eq:exint} as $f = \widehat{\nabla}\cdot\widehat{\phi}$ by the fundamental theorem of calculus.
We rewrite \eqref{eq:exint} as two nested one dimensional integrals, one in each reference coordinate direction, by applying the divergence theorem in $\mathbb{R}^2$ and the following calculations
\begin{align}
\int_{\widehat{\omega}} f \, d\widehat{x} &= \int_{\widehat{\omega}} \widehat{\nabla}\cdot\widehat{\phi} \, d\widehat{x}
\\&=
\int_{\partial{\widehat{\omega}}} \widehat{\phi}\cdot\widehat{\nu} \, d\widehat{\gamma}
\\&=
\int_{\partial{\widehat{\omega}}}  \widehat{\nu}_2  \left( \int_{a}^{\widehat{x}_2} f(\widehat{x}_1,q) \, dq \right) \, d\widehat{\gamma}
\\&=
\sum_{k=1}^N \int_{{\widehat{\gamma}}_k} \widehat{\nu}_2 \left( \int_{a}^{\widehat{x}_2} f(\widehat{x}_1,q) \, dq \right) \, d\widehat{\gamma}
\\&=
\sum_{k=1}^N \int_0^1 \widehat{\nu}_2\circ\widehat{\gamma}_k(s)
\left( \int_{a}^{\widehat{\gamma}_{k,2}(s)} f(\widehat{\gamma}_{k,1}(s),q) \, dq \right)
 \| \widehat{\gamma}_k' \|_{\mathbb{R}^2} \, ds
\\&=
\sum_{k=1}^N \int_0^1
(-\widehat{\gamma}_{k,1}'(s))
\int_{a}^{\widehat{\gamma}_{k,2}(s)} f(\widehat{\gamma}_{k,1}(s),q) \, dq
\, ds
\label{eq:kgjns}
\\&=
\sum_{k=1}^N \int_0^1
\widehat{\gamma}_{k,1}'(s)\left(a - \widehat{\gamma}_{k,2}(s)\right)
\int_{0}^{1} f(\widehat{\gamma}_{k,1}(s),a + \tilde{s}(\widehat{\gamma}_{k,2}(s)-a)) \, d\tilde{s}
\, ds
\end{align}
where we in \eqref{eq:kgjns} use \eqref{eq:normal2d_mani} and in the last equality make a change of integration in the inner 1D integral.

Assuming the integrand $f$ is a tensor product polynomial of degree $p_f$, i.e. $f\in Q_{p_f}$, and that the boundary representation $\widehat{\gamma}_k(s)$ in each dimension is a polynomial of degree $p_\gamma$, i.e. $\widehat{\gamma}_k\in P_{p_\gamma}$, we can deduce the following resulting polynomial degrees of the integrands in the inner and outer 1D integral
\begin{alignat}{3}
& f(\widehat{\gamma}_{k,1}(s),a + \tilde{s}(\widehat{\gamma}_{k,2}(s)-a))
&&\in P_{p_f} \quad &&\text{w.r.t. $\tilde{s}$}
\\
&\widehat{\gamma}_{k,1}'(s)\left(a - \widehat{\gamma}_{k,2}(s)\right)
\int_{0}^{1} f(\widehat{\gamma}_{k,1}(s),a + \tilde{s}(\widehat{\gamma}_{k,2}(s)-a)) \, d\tilde{s}
\quad&&\in P_{2p_f p_\gamma + 2 p_\gamma - 1} \quad &&\text{w.r.t. $s$}
\end{alignat}
and thus we for each reference dimension can choose the number of Gauss points such that the 1D integrals in \eqref{eq:kgjns} are evaluated exactly. Let $\{s^{(i)},w^{(i)}\}_{i=1}^n$ and $\{\tilde{s}^{(j)},\tilde{w}^{(j)}\}_{j=1}^{\tilde{n}}$ be the set of Gauss quadrature points and weights which exactly integrates polynomials of degree $\leq p_f$ and degree $\leq 2p_f p_\gamma + 2 p_\gamma -1$ on $[0,1]$, respectively. Thus, the resulting quadrature points and weights are
\begin{align}
\begin{aligned}
\widehat{x}_1^{(ijk)} &=
\widehat{\gamma}_{k,1}(s^{(i)})
\\
\widehat{x}_2^{(ijk)} &=
a + \tilde{s}^{(j)}\left(\widehat{\gamma}_{k,2}(s^{(i)})-a\right)
\\
\widehat{w}^{(ijk)} &=
\widehat{\gamma}_{k,1}'(s^{(i)})\left(a - \widehat{\gamma}_{k,2}(s^{(i)})\right) w^{(i)} \tilde{w}^{(j)}
\end{aligned}
\qquad
\text{where}
\qquad
\left\{
\begin{aligned}
1 &\leq i \leq n \\ 1 &\leq j \leq \tilde{n} \\ 1 &\leq k \leq N
\end{aligned}
\right.
\end{align}
Note that the sum of the quadrature weights $\{\widehat{w}^{(ijk)}\}$ gives the area of $\widehat{\omega}$.
An illustration of the resulting quadrature points for an example intersection is given in Figure~\ref{fig:quadrature_mf} and in Table~\ref{tab:quadrature_mf} we list the polynomial degree and the number of integration points in the nested 1D integrals depending on the tensor product polynomial of the initial integrand $f$ and the boundary polynomial degree of the boundary representation.

\begin{rem}[Domain Complexity]
Note that the formulation of the quadrature rule assumes nothing about the complexity of the integration domain other than that its boundary should be well approximated by piecewise polynomial parametrizations. Thus, complex boundaries or holes pose no problem with this quadrature rule and the resolution of the boundary approximation is independent of the size of the finite elements.
On the other hand, allowing arbitrarily complex boundaries within an element means we cannot assume a readily available bulk description, for example a mesh, of the intersection between the element and the domain.
\end{rem}

\begin{rem}[Current Implementation] \label{rem:quadrature-implementation}
As the tensor product polynomials of our finite element basis functions will be perturbed by the Riemannian metric we compensate for this in the quadrature rule by choosing a higher order rule than indicated by the basis functions alone. Also, in cases where the trimmed patches in the reference domain are not exactly represented by piecewise $P_1$ curves we in our current implementation choose a $P_1$ representation with a resolution high enough for this error to be negligible. This use of $P_1$ representation is however not a limitation of the quadrature rule as seen in the above derivation and an alternative would be to use higher order approximations of the patch boundaries instead.
\end{rem}

\begin{rem}[Negative Quadrature Weights]
As seen in Figure~\ref{fig:quadrature_mf} the quadrature method includes both positive and negative weights which stems from adding and subtracting various parts of the integration domain. This is an undesirable property when considering reduced quadrature as inexact cancellation possibly could lead to loss of coercivity.
A possible modification which improves the method in this regard is to replace the constant lower bound $a$ in the integral in \eqref{eq:quadvecfield} by a polynomial of degree $\leq p_\gamma$ where the polynomial coefficients are chosen
such that the number of negative quadrature weights are minimized. If no restriction is placed on the polynomial coefficients, this could lead to some quadrature points being placed slightly outside the element.

However, in the present work we do further not investigate the aspect of reduced integration. We view this as a reference quadrature rule capable of integrating higher order tensor product polynomials and as noted in Remark~\ref{rem:quadrature-implementation} we rather use an increased integration order. In a complicated real world setting it is therefore advisable to chose an alternative quadrature scheme where positive quadrature weights can be guaranteed.
\end{rem}

\begin{figure}
\centering
\begin{subfigure}[b]{0.36\textwidth}\centering
\includegraphics[width=0.9\linewidth]{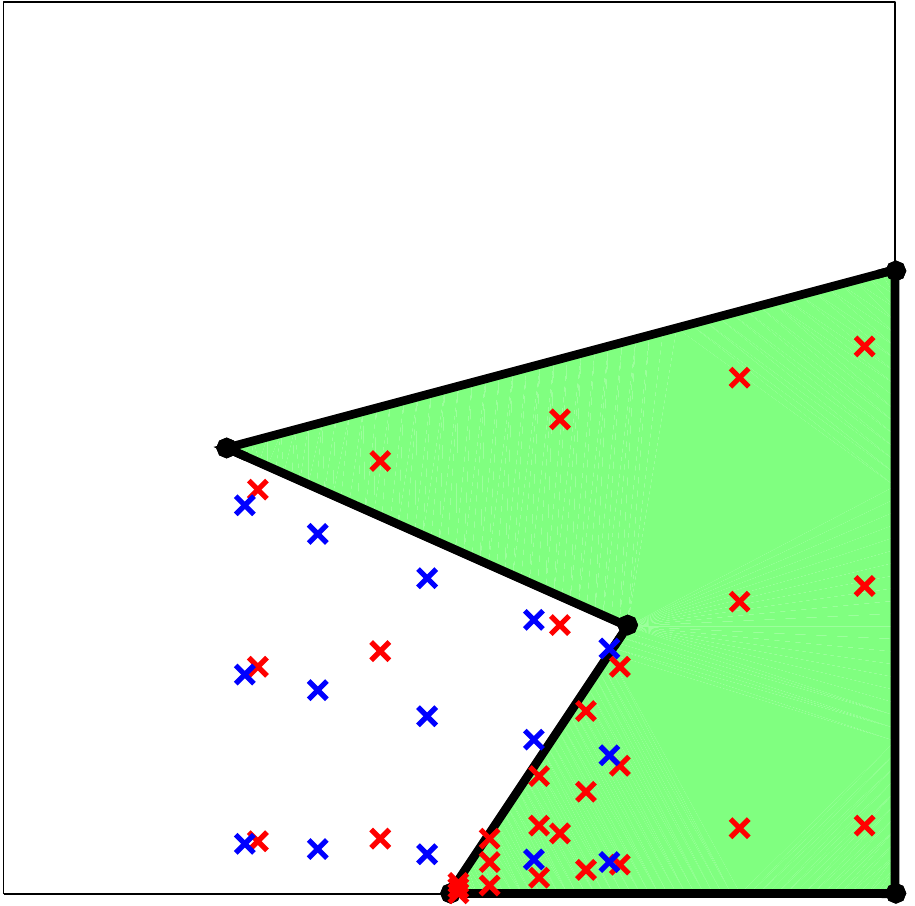}
\caption{$P_1$ approximation}
        \label{fig:p1q}
\end{subfigure}
\begin{subfigure}[b]{0.36\textwidth}\centering
\includegraphics[width=0.9\linewidth]{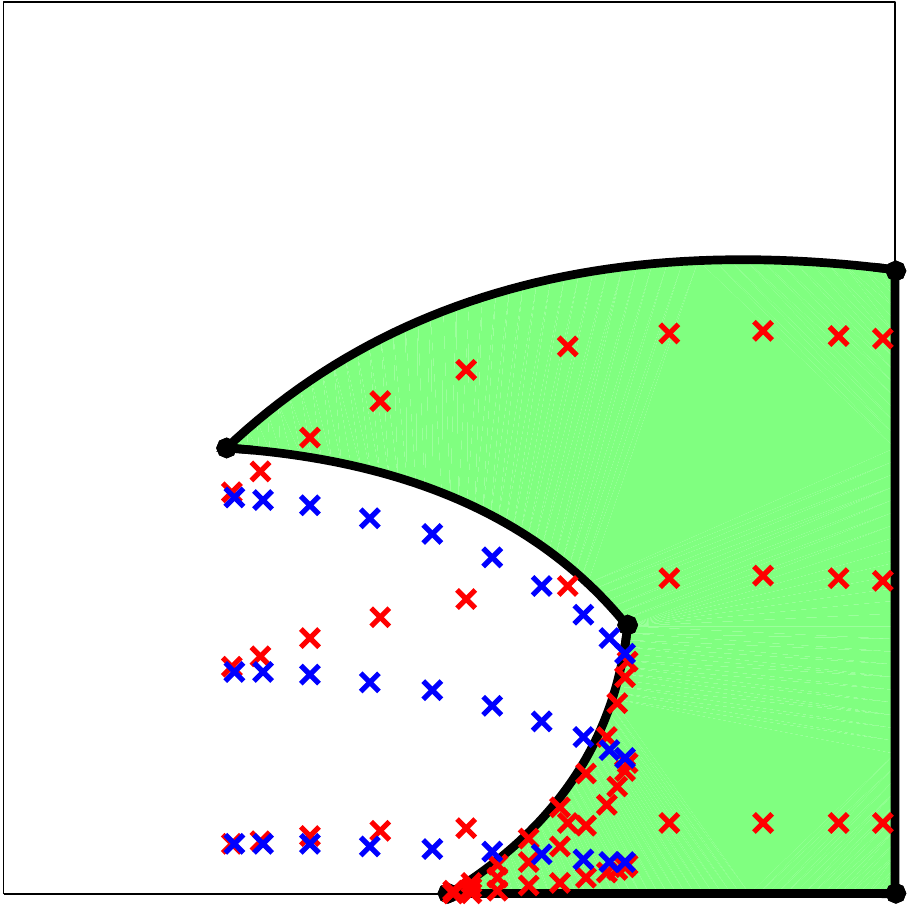}
\caption{$P_2$ approximation}
        \label{fig:p2q}
\end{subfigure}
\caption{Illustrations of quadrature rule for exact integration of $Q_4$ polynomials when the boundary is approximated using five $P_1$ segments (a) and five $P_2$ segments (b). Quadrature points with positive and negative weights are indicated in red and blue, respectively.}
\label{fig:quadrature_mf}
\end{figure}

\begin{table}[tb]
\centering
\begin{tabular}{llll}
\toprule
$(f,\widehat{\gamma}_k)$ & $\widehat{x}_2$ integrand
& $\widehat{x}_1$ integrand & points/seg. \\
\midrule
$(Q_2,P_1)$ & $P_2$ & $P_5$ & $2\times 3=6$\\
$(Q_4,P_1)$ & $P_4$ & $P_9$ & $3\times 5=15$\\
$(Q_6,P_1)$ & $P_6$ & $P_{13}$ & $4\times 7=28$\\
\midrule
$(Q_2,P_2)$ & $P_2$ & $P_{11}$ & $2\times 6=12$ \\
$(Q_4,P_2)$ & $P_4$ & $P_{19}$ & $3\times 10=30$\\
$(Q_6,P_2)$ & $P_6$ & $P_{27}$ & $4\times 14=56$\\
\bottomrule
\end{tabular}
\caption{Polynomial degree for nested 1D integrals and resulting number of quadrature points for each boundary segment assuming no optimizations, such as removing zero weight points, are used.}
\label{tab:quadrature_mf}
\end{table}

\begin{figure}[tb]
\centering
\includegraphics[width=0.27\linewidth]{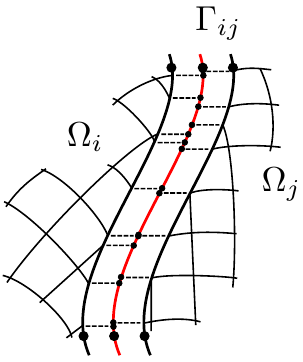}
\caption{Illustration of $\Gamma_{ij}$, i.e. the interface between patch $\domain_i$ and patch $\domain_j$. The points indicate the partition of the curve describing $\Gamma_{ij}$ such that each segment only is associated to a single element in $\widehat{\mcK}_{h,i}$ respectively in $\widehat{\mcK}_{h,j}$.}
\label{fig:mesh-interface}
\end{figure}

\paragraph{Quadrature on Interfaces.} To compute the interface terms 
(\ref{eq:interface-terms-a}) and (\ref{eq:interface-terms-b}) we construct a 
partition of $\hatGamma^\ia_{\ia\ib}$ which contains both all the 
intersection points between the curve $\hatGamma^\ia_{\ia \ib}$ 
and the mesh $\widehat{\mcK}_{h,\ia}$  as well as all the intersection points 
between the curve  $\hatGamma^\ib_{\ia \ib}$ and the mesh 
$\widehat{\mcK}_{h,\ib}$ mapped back to  $\hatGamma^\ia_{\ia\ib}$ using 
the mapping  $p_{\ia \ib}^{-1} = F_\ia^{-1} F_\ib$. Each interval in the partition of $\hatGamma^\ia_{\ia\ib}$ will thus be associated only with a single element in $\widehat{\mcK}_{h,\ia}$ and a single element in $\widehat{\mcK}_{h,\ib}$ and we apply a 1D Gauss quadrature rule on each interval. See Figure 
\ref{fig:mesh-interface} for an illustration of the partition of the interface.

\section{A Priori Error Estimates} \label{sec:error-est}

Let $a \lesssim b$ denote $a \leq C b$ with a constant $C$ independent of the mesh parameter $h$. 

\subsection{Norms}
Given a set of faces $\widehat{\mcF}$ in a mesh let
\begin{equation}
 \| \hatv \|^2_{j_{h,\widehat{\mcF}}} = j_{h,\widehat{\mcF}}(\widehat{v},\widehat{v}) = \sum_{k=1}^p \gamma_k h^{2k-1} \| \llb \widehat{D}^k_n \widehat{v}\rrb \|^2_{\widehat{\mathcal{F}}}
\end{equation}
We define the following energy norm
\begin{equation}
\tn v \tn^2_h  =\sum_{\ia\in\Ii}  \tn v \tn^2_{h, \ia} + \ \sum_{\mathclap{ (\ia,\ib)\in\Iij }} \ h \| \{ n \cdot  \nabla v \}  \|^2_{\interface_{\ia\ib}}  +  h^{-1} \|\llb v\rrb \|^2_{\interface_{\ia \ib}}
\label{eq:enNorm}
\end{equation}
where 
\begin{equation}
\tn v \tn^2_{h, \ia} =  \| \nabla v \|^2_{\Omega_\ia} + \| \hatv \|^2_{j_{h,\Fhi}}
\end{equation}

\subsection{Inverse Inequalities}

On elements which are partially outside the patch domain, as illustrated in Figure~\ref{fig:twoelms}, we
use the following inverse inequality to control a discrete function or its gradient on an element in terms of the gradient on a neighboring element and a suitable face term.

We will below make repeated use of the set of elements cut by the patch boundary $\Gamma_i$ and thus we define the set
\begin{align} \label{eq:gamma-partition}
\mcK_{h,i}(\Gamma_i) = \{ K \in \mcK_{h,i} : \Gamma_i \cap K \neq \emptyset \}
\end{align}
and analogously we define $\widehat{\mcK}_{h,i}(\widehat{\Gamma}_i)$ in the reference domain.

\begin{lem}\label{lem:inverse-ghost}
Let the two elements $\hatK_1,\hatK_2\in\widehat{\mcK}_{h,i}$ be neighbors of face $\widehat{F}$ with face normal $n_{\widehat{F}}$.
For all $\hatv \in { \hatV}_{h,i}|_{\hatK_1\cup \hatK_2}$ the following estimates then hold
\begin{align}\label{eq:inverse-ghost}
\| \hatnabla \hatv \|^2_{\hatK_1}
&\lesssim 
 \| \hatnabla \hatv \|^2_{\hatK_2} 
 + \| \hatv \|^2_{j_{h,\widehat{F}}}
\\ \label{eq:inverse-ghost-L2}
\| \hatv \|^2_{\hatK_1}
&\lesssim 
 \| \hatv \|^2_{\hatK_2} 
 + h^2 \| \hatv \|^2_{j_{h,\widehat{F}}}
\end{align}
\end{lem}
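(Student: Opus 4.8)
The key observation is that both estimates are purely local statements about polynomials on two neighboring elements $\hatK_1, \hatK_2$ sharing a face $\widehat{F}$, entirely in the reference coordinates (the Riemannian metric plays no role here). So the plan is to work on a reference configuration: pick $\hatK_2$ as the ``good'' element and write $\hatv|_{\hatK_1}$ as a polynomial extension from $\hatK_2$. The natural device is the Taylor polynomial of $\hatv|_{\hatK_2}$ expanded across the face $\widehat{F}$ in the normal direction $n_{\widehat{F}}$: since $\hatv$ is a tensor product polynomial of degree $p$, its restriction to $\hatK_1$ and the degree-$p$ Taylor polynomial $T\hatv$ of $\hatv|_{\hatK_2}$ about a point on $\widehat{F}$ differ by exactly the jumps of the normal derivatives $\llb \widehat{D}^k_n \hatv\rrb$, $k=1,\dots,p$, across $\widehat{F}$. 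Concretely, one writes $\hatv|_{\hatK_1} = T\hatv + R$, where $T\hatv$ is the polynomial extension of $\hatv|_{\hatK_2}$ and $R$ is a polynomial on $\hatK_1$ that vanishes together with enough normal derivatives on $\widehat{F}$ to be controlled by $\sum_k \|\llb \widehat{D}^k_n \hatv\rrb\|^2_{\widehat{F}}$.

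\textbf{Key steps.} First, I would reduce to a reference element of unit size by the usual scaling argument, tracking powers of $h$: on the unit-size configuration there are no constants depending on $h$, and finite-dimensionality of the polynomial space makes all the relevant norms equivalent. Second, on the reference configuration, decompose $\hatv|_{\hatK_1} = T\hatv + R$ as above. For the first estimate \eqref{eq:inverse-ghost}, bound $\|\hatnabla T\hatv\|_{\hatK_1} \lesssim \|\hatnabla T\hatv\|_{\hatK_2 \cup \hatK_1} \lesssim \|\hatnabla \hatv\|_{\hatK_2}$ using that $T\hatv$ is a fixed polynomial agreeing with $\hatv$ on $\hatK_2$ together with a norm-equivalence / inverse-estimate on the fixed-size patch; and bound $\|\hatnabla R\|_{\hatK_1} \lesssim \sum_{k=1}^p \|\llb \widehat{D}^k_n \hatv\rrb\|_{\widehat{F}}$ because $R$ is a polynomial determined linearly by those jump data. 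Rescaling back introduces the weights $h^{2k-1}$ in the $j_{h,\widehat{F}}$-norm and the matching powers on the left, giving \eqref{eq:inverse-ghost}. Third, for \eqref{eq:inverse-ghost-L2}, the same decomposition works with $\|\hatv\|$ in place of $\|\hatnabla\hatv\|$; the extra factor $h^2$ on the stabilization term appears simply because the $L^2$ norm scales with one more power of $h$ than the gradient, so the $h^{2k-1}$ weights become effectively $h^{2k+1}$ against an $L^2$ left-hand side — matching $h^2\|\hatv\|^2_{j_{h,\widehat{F}}}$.

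\textbf{Main obstacle.} The routine parts are the scaling bookkeeping and the finite-dimensional norm equivalences. The one step that needs genuine care is establishing that the ``remainder'' polynomial $R$ (the difference between $\hatv|_{\hatK_1}$ and the extension of $\hatv|_{\hatK_2}$) is controlled by exactly the $p$ jump quantities $\llb \widehat{D}^k_n \hatv\rrb$, $k=1,\dots,p$, and nothing more — i.e.\ that the map from $(\hatv|_{\hatK_2}, \llb \widehat{D}^1_n\hatv\rrb, \dots, \llb \widehat{D}^p_n\hatv\rrb)$ to $\hatv|_{\hatK_1}$ is a well-defined bounded linear isomorphism on the relevant polynomial spaces. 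Because $\hatv$ is a genuine tensor product polynomial of degree $p$ (not a full polynomial of total degree $p$), one must check that the normal-direction Taylor expansion truncated at order $p$ together with the $p$ jump corrections indeed recovers $\hatv|_{\hatK_1}$ exactly; this is where the tensor product structure and the choice of $\widehat{F}$ as an axis-aligned face matter. I expect this to be handled by a dimension count plus the observation that a tensor product polynomial, restricted along the normal coordinate with the tangential coordinate fixed, is a univariate polynomial of degree $\le p$, so its values on $\hatK_1$ are exactly determined by the value and first $p$ derivatives at the interface — which is precisely the data appearing on the right-hand side. This argument is essentially the one in \cite{MassingLarsonLoggEtAl2013a}, which I would cite.
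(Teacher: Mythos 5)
Your proposal is correct and follows essentially the same route as the paper's proof: the paper likewise writes $\hatv|_{\hatK_1} = \hatv|_{\hatK_2} + (\hatv|_{\hatK_1}-\hatv|_{\hatK_2})$, passes the polynomially extended part from $\hatK_1$ to $\hatK_2$ by an inverse estimate, and controls the difference through the exact Taylor expansion in the face-normal direction whose coefficients are precisely the jumps $\llb \widehat{D}^k_n \hatv \rrb$, which is exactly your tensor-product/axis-aligned-face observation. The only cosmetic difference is in execution: the paper bounds the remainder by an explicit computation (Cauchy--Schwarz in $k$ and integration of $s^{2(k-1)}$, plus a Poincar\'e inequality on $\hatK_1$ for the $L^2$ estimate) rather than by your scaling and finite-dimensionality argument.
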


\begin{figure}
\centering
\includegraphics[width=0.25\linewidth]{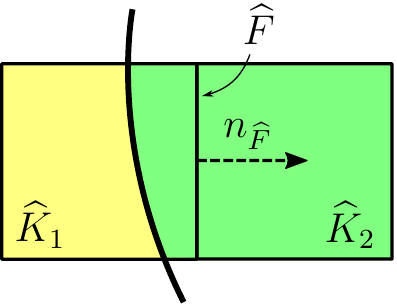}
\caption{Elements $\widehat{K}_1$ and $\widehat{K}_2$ neighboring face $\widehat{F}$ with the patch boundary cutting through $\widehat{K}_1$ such that $\widehat{K}_1$ is partially outside the patch domain.}
\label{fig:twoelms}
\end{figure}

\begin{proof}
We begin by proving estimate \eqref{eq:inverse-ghost} and we then make use of calculations in this proof when proving the second estimate \eqref{eq:inverse-ghost-L2}.
\paragraph{Proof of Estimate (\ref{eq:inverse-ghost}).}
Let $\hatv_l = \hatv |_{{\hatK}_l}$, $l=1,2.$ 
Since ${\hatv}_2$ is a 
polynomial we may evaluate $\hatv_2$ also  on $\hatK_1$. Using 
the triangle inequality followed by the inverse estimate $\|\hatv_2\| _{\hatK_1}
 \lesssim \|\hatv_2\|_{\hatK_2}$, we obtain 
\begin{align}
\| \hatnabla \hatv_1 \|_{\hatK_1} & \leq 
\| \hatnabla \hatv_2\|_{\hatK_1} + \| \hatnabla(\hatv_1 - \hatv_2) \|_{\hatK_1}
\\
&\lesssim 
\| \hatnabla \hatv_2\|_{\hatK_2} + \| \hatnabla{(\hatv_1 - \hatv_2)} \|_{\hatK_1}
\end{align}
To estimate the second term we note that using Taylor's formula on $(\hatv_1 - \hatv_2)$ at $x_{\widehat{F}} \in {\widehat{F}}$ in the face normal direction $n_{\widehat{F}}$ (with respect to the Euclidean $\IR^2$ inner product) gives
\begin{align}
(\hatv_1 - \hatv_2)(x)
&=
\underbrace{(\hatv_1 - \hatv_2)(x_{\widehat{F}})}_{=0}
+
\sum_{k=1}^p \hatD_{n}^k (\hatv_1-\hatv_2)(x_{\widehat{F}}) \frac{(n_{\widehat{F}}\cdot(x - x_{\widehat{F}}))^k}{k!} 
\end{align}
for $(x - x_F)  = s n_{\widehat{F}}$ with $s \in \IR$. Using an orthonormal coordinate system $\{\hate_1,\hate_2\}$ with $\hate_1=n_{\widehat{F}}$, i.e. a coordinate system which is aligned with the face normal and the face itself, we have
$x_{\widehat{F}} = [x_{\widehat{F},1}, x_2]^T$
and 
$n_{\widehat{F}}\cdot(x-x_{\widehat{F}}) = s = x_1 - x_{\widehat{F},1}$.
This gives the following expressions for the partial derivatives
\begin{align}
\hatpartial_1(\hatv_1- \hatv_2)(x) 
&= \sum_{k=1}^p \hatD_{n}^k (\hatv_1-\hatv_2)(x_{\widehat{F}}) \hatpartial_1\frac{(x_1  - x_{\widehat{F},1})^{k}}{k!}
\\
&= \sum_{k=1}^p \hatD_{n}^k (\hatv_1-\hatv_2)(x_{\widehat{F}}) \frac{(x_1 - x_{\widehat{F},1})^{k-1}}{(k-1)!} 
\\
\hatpartial_2(\hatv_1- \hatv_2) (x) 
&= \sum_{k=1}^p \hatpartial_2 \hatD_{n}^k (\hatv_1-\hatv_2)(x_{\widehat{F}}) \frac{(x_1  - x_{\widehat{F},1})^{k}}{k!} 
\end{align}

Using the Cauchy--Schwarz inequality for sums we obtain
\begin{align}
\|\hatpartial_1(\hatv_1- \hatv_2)\|^2_{\hatK_1} 
&\lesssim  \sum_{k=1}^p \|\hatD_{n}^k (\hatv_1-\hatv_2)\|_{\widehat{F}}^2 
\frac{1}{((k-1)!)^2} \int_0^h s^{2(k-1)} \, ds
\\
&\lesssim  \sum_{k=1}^p \| \llb \hatD_{n}^k \hatv \rrb \|_{\widehat{F}}^2 
\frac{h^{2k-1}}{((k-1)!)^2 (2 k - 1)}
\end{align}
and in the same way we have
\begin{align}
\|\hatpartial_2(\hatv_1- \hatv_2)\|^2_{\hatK_1} 
&\lesssim  \sum_{k=1}^p \|\hatpartial_2 \llb \hatD_{n}^k \hatv \rrb \|_{\widehat{F}}^2 
\frac{h^{2k+1}}{(k!)^2}
\\ \label{eq:inverselemma-B}
&\lesssim  \sum_{k=1}^p \| \llb \hatD_{n}^k \hatv \rrb \|_{\widehat{F}}^2 
\frac{h^{2k-1}}{(k!)^2}
\end{align}
where we used an inverse inequality in the last step to remove $\hatpartial_2$. In summary, we have
\begin{equation}
\|\hatnabla \hatv \|^2_{\hatK_1} 
\lesssim 
\|\hatnabla \hatv \|^2_{\hatK_2} 
+ 
 \sum_{k=1}^p h^{2k-1} \| \llb \hatD_{n}^k \hatv \rrb \|_{\widehat{F}}^2 
\end{equation}
which concludes the proof of estimate \eqref{eq:inverse-ghost}.

\paragraph{Proof of Estimate (\ref{eq:inverse-ghost-L2}).}
Starting in the same way as the proof of \eqref{eq:inverse-ghost} but without the gradients we have
\begin{align}
\| \hatv_1 \|_{\hatK_1} & \leq 
\| \hatv_2\|_{\hatK_1} + \| \hatv_1 - \hatv_2 \|_{\hatK_1}
\lesssim 
\| \hatv_2\|_{\hatK_2} + \| \hatv_1 - \hatv_2 \|_{\hatK_1}
\end{align}
As $(\hatv_1 - \hatv_2)|_{\widehat{F}}=0$ the Poincaré inequality holds yielding the following estimate
\begin{align}
\| \hatv_1 - \hatv_2 \|_{\hatK_1} \lesssim h^2 \| \nabla (\hatv_1 - \hatv_2) \|_{\hatK_1}
\end{align}
and we handle the remaining term as in the proof of \eqref{eq:inverse-ghost}.
\end{proof}

\begin{ass}[Patch Geometry]\label{ass:patches}
For a given element $K$ let  $\mcN_0(K) = K$ and for $l=1,2,\dots,$ let $\mcN_{l}(K)$ be 
the union of all elements that share a face or a node with an element in 
$\mcN_{l-1}(K)$, in other words $\mcN_l(K)$ is the set of elements that are 
neighbors of distance less or equal to $l$.  Assume that there is a positive 
integer $l$, a maximum mesh parameter $0<h_0$, and a positive constant
 $c$, such that for all $h \in (0,h_0]$ and all $K \in \mcK_{h}(\Gamma)$ there is an element $K' \in \mcN_{l}(K)$ such that $|K'\cap \hatOmega_\ia| \geq c|K|$.
\end{ass}

\begin{rem} This assumption limits the complexity of the reference subdomains 
$\hatOmega_\ia$. Note that the assumptions holds for $0< h \leq h_0$ with 
$h_0$ small enough when the boundary $\partial \hatOmega_\ia$ satisfies a cone condition. The assumption does not hold for instance in the vicinity of a 
cusp. In future work we will return to situations with more general patches  including very thin patches and patches with cusps since such patches may occur in CAD models used in practical engineering design. 
\end{rem}

\begin{lem}\label{eq:L2-bdry-elements} Given Assumption~\ref{ass:patches} it for $\hatK \in \widehat{\mcK}_{h,i}(\widehat{\Gamma}_i)$, holds
\begin{equation}\label{eq:inverse-ref}
\| \hatv \|^2_{\hatK} \lesssim \|\hatv \|^2_{\mcN_l(\hatK)\cap \hatOmega_\ia} 
+ \| \hatv \|^2_{j_{h,\mcF(\mcN_l(\hatK))}}
\end{equation}
where $\mcF(\mcN_l(\hatK))$ is the set of interior faces in the neighborhood $\mcN_l(\hatK)$.
\end{lem}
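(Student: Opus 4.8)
The plan is to obtain \eqref{eq:inverse-ref} by a chaining argument: repeatedly apply the two-element inverse inequality of Lemma~\ref{lem:inverse-ghost} along a path of face-neighboring elements from $\hatK$ to the ``good'' element $\hatK'$ guaranteed by Assumption~\ref{ass:patches}, and then absorb the $L^2$ norm on $\hatK'$ into the $L^2$ norm on $\hatK'\cap\hatOmega_\ia$ using that $|\hatK'\cap\hatOmega_\ia|\geq c|\hatK|$. First I would fix $\hatK\in\widehat{\mcK}_{h,i}(\widehat\Gamma_i)$ and invoke Assumption~\ref{ass:patches} to get $\hatK'\in\mcN_l(\hatK)$ with $|\hatK'\cap\hatOmega_\ia|\geq c|\hatK|$. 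Since $\hatK'$ lies within distance $l$ of $\hatK$, there is a chain $\hatK=\hatK^{(0)},\hatK^{(1)},\dots,\hatK^{(m)}=\hatK'$ with $m\leq C(l)$ where consecutive elements share a face $\hatF^{(r)}\in\mcF(\mcN_l(\hatK))$ (here one uses that two elements sharing only a node in a structured tensor-product mesh can be joined by a short face-neighbor path, so we may as well assume face adjacency throughout, up to enlarging the constant and the face set). Applying \eqref{eq:inverse-ghost-L2} along the chain gives
\begin{equation}
\|\hatv\|^2_{\hatK} \lesssim \|\hatv\|^2_{\hatK'} + h^2\sum_{r=1}^{m}\|\hatv\|^2_{j_{h,\hatF^{(r)}}} \lesssim \|\hatv\|^2_{\hatK'} + h^2\|\hatv\|^2_{j_{h,\mcF(\mcN_l(\hatK))}},
\end{equation}
with constants depending only on $l$ (and the fixed shape of the reference mesh), which are $h$-uniform by the scaling in Lemma~\ref{lem:inverse-ghost}. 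Note $h^2\leq h_0^2\lesssim 1$, so the $h^2$ factor on the stabilization term is harmless and can be dropped to match the statement.

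Next I would control $\|\hatv\|^2_{\hatK'}$ by $\|\hatv\|^2_{\hatK'\cap\hatOmega_\ia}$. This is a finite-dimensional norm-equivalence argument: on the reference element $\hatK'$, consider the two seminorms $\hatv\mapsto\|\hatv\|_{\hatK'}$ and $\hatv\mapsto\|\hatv\|_{\hatK'\cap\hatOmega_\ia}$ on the finite-dimensional polynomial space $\widehat V_{h,i}|_{\hatK'}=Q_p$. The second is actually a norm provided $\hatK'\cap\hatOmega_\ia$ has positive measure, which it does since $|\hatK'\cap\hatOmega_\ia|\geq c|\hatK|>0$. After rescaling to the reference cube, the relevant fact is that there is a constant depending only on $p$, the space dimension, and the \emph{measure fraction} $c$ (not on the shape of the intersection) such that $\|q\|^2_{\widehat K}\leq C(p,c)\|q\|^2_{\omega}$ for every polynomial $q\in Q_p$ and every measurable $\omega\subset\widehat K$ with $|\omega|\geq c|\widehat K|$; this is a standard consequence of the equivalence of all norms on the finite-dimensional space $Q_p$ together with a compactness/normalization argument over the (compact) set of admissible measure-$c$ subsets, or alternatively via a Remez-type inequality. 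Combining this with the chaining estimate and $\hatK'\cap\hatOmega_\ia\subset\mcN_l(\hatK)\cap\hatOmega_\ia$ yields \eqref{eq:inverse-ref}.

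The main obstacle is making the step $\|\hatv\|^2_{\hatK'}\lesssim\|\hatv\|^2_{\hatK'\cap\hatOmega_\ia}$ rigorous with a constant that is \emph{uniform over all admissible intersection shapes} and over $h$. The uniformity in $h$ is immediate after the affine rescaling to the unit cube (the mesh is uniform and structured, and $|\hatOmega_\ia$-boundary curves cut it arbitrarily but the polynomial degree $p$ is fixed). The uniformity over shapes is the real content: one cannot simply quote a fixed inverse inequality because $\hatK'\cap\hatOmega_\ia$ can be geometrically wild. The clean way is the normalization argument: suppose not; then there is a sequence $q_n\in Q_p$ with $\|q_n\|_{\widehat K}=1$ and measurable $\omega_n\subset\widehat K$, $|\omega_n|\geq c$, with $\|q_n\|_{\omega_n}\to0$; by compactness of the unit sphere of $Q_p$ pass to a limit $q$ with $\|q\|_{\widehat K}=1$, and derive $\int_{\widehat K}\mathbf 1_{\omega_n}q_n^2\to0$; a short argument (e.g. $\int_{\widehat K}\mathbf 1_{\omega_n}q^2 = \int\mathbf 1_{\omega_n}(q^2-q_n^2)+\int\mathbf 1_{\omega_n}q_n^2$, and $|\{q^2\geq\epsilon\}|>0$ for some $\epsilon>0$ since $q\not\equiv0$ is a nonzero polynomial) forces a contradiction with $|\omega_n|\geq c$. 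I expect this normalization lemma to be either cited from the CutFEM literature or proved in a line or two; everything else is routine bookkeeping with the chain length $m$ and the constants from Lemma~\ref{lem:inverse-ghost}.
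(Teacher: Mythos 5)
Your proposal is correct and follows essentially the same route as the paper, whose one-line proof simply invokes repeated use of Lemma~\ref{lem:inverse-ghost} together with Assumption~\ref{ass:patches}. The only difference is that you spell out the step the paper leaves implicit, namely that $\| \hatv \|^2_{\hatK'} \lesssim \| \hatv \|^2_{\hatK'\cap\hatOmega_\ia}$ for polynomials when $|\hatK'\cap\hatOmega_\ia|\geq c|\hatK|$, via a finite-dimensional norm-equivalence (compactness/Remez-type) argument with a constant depending only on $p$ and the measure fraction, which is exactly the standard justification.
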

\begin{proof} This estimate follows directly from repeated use of Lemma 
\ref{lem:inverse-ghost} together with the Assumption on Patch Geometry 
that there is $\hatK' \in \mcN_l(\hatK)$ such that $|\mcN_l(\hatK) \cap \hatOmega_\ia| \gtrsim h^2$.
\end{proof}

\begin{lem} \label{lem:inverse}
There is a constant such that for all $v\in V_{h,\ia}$ 
it holds
\begin{equation}\label{eq:inverse}
h\| n_\ia \cdot \nabla v \|^2_{\Gamma_\ia} 
\lesssim \tn v \tn^2_{h, \ia}
\end{equation}
\end{lem}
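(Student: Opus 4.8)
The goal is a discrete trace--inverse inequality controlling the normal flux on the (possibly cut) patch boundary $\Gamma_\ia$ by the energy norm on the patch. The plan is to work entirely in the reference domain, where $V_{h,\ia}$ consists of tensor product polynomials on the structured mesh $\widehat{\mcK}_{h,\ia}$, and to reduce the curved surface quantity $h\|n_\ia\cdot\nabla v\|^2_{\Gamma_\ia}$ to a reference-domain quantity using the metric identities from Section~\ref{section:diffoper}. Concretely, by \eqref{eq:local-a} and \eqref{eq:normal-coordinates} together with the integration formula for curves, we have $h\|n_\ia\cdot\nabla v\|^2_{\Gamma_\ia} = h\int_{\widehat{\Gamma}_\ia}\bigl(\|\hatG^{-1}\hatnu\|_{\hatg}^{-1}\hatnu\cdot\hatG^{-1}\hatnabla\hatv\bigr)^2\|\hattau\|_{\hatg}\,d\hatgamma$, and since $\hatG$ has uniformly bounded eigenvalues, this is $\lesssim h\|\hatnabla\hatv\|^2_{\widehat{\Gamma}_\ia}$, the (reference) $L^2$ norm of the gradient over the reference trim curve.

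Next I would localize: $\widehat{\Gamma}_\ia$ is covered by the cut elements $\hatK\in\widehat{\mcK}_{h,\ia}(\widehat{\Gamma}_\ia)$, so $h\|\hatnabla\hatv\|^2_{\widehat{\Gamma}_\ia}\lesssim\sum_{\hatK}h\|\hatnabla\hatv\|^2_{\widehat{\Gamma}_\ia\cap\hatK}$. On each such element, a discrete trace inequality of the form $h\|\hatnabla\hatv\|^2_{\widehat{\Gamma}_\ia\cap\partial\hatK}\lesssim\|\hatnabla\hatv\|^2_{\hatK}$ (for tensor product polynomials of degree $p$ on an element of size $h$, with the curve $\widehat{\Gamma}_\ia\cap\hatK$ of bounded length) brings us to $\sum_{\hatK}\|\hatnabla\hatv\|^2_{\hatK}$, summed over cut elements. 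The subtle point is that this last sum is over elements that may be mostly \emph{outside} $\hatOmega_\ia$, so it is not directly bounded by $\|\nabla v\|^2_{\Omega_\ia}$. This is exactly what the stabilization is designed to fix: applying Lemma~\ref{lem:inverse-ghost} repeatedly (equivalently, Lemma~\ref{eq:L2-bdry-elements} in gradient form, which itself uses Assumption~\ref{ass:patches}) we get $\|\hatnabla\hatv\|^2_{\hatK}\lesssim\|\hatnabla\hatv\|^2_{\mcN_l(\hatK)\cap\hatOmega_\ia}+\|\hatv\|^2_{j_{h,\mcF(\mcN_l(\hatK))}}$. Summing over the (finitely overlapping) neighborhoods and translating the reference-domain gradient back to $\|\nabla v\|_{\Omega_\ia}$ via the metric bounds, the right-hand side is controlled by $\|\nabla v\|^2_{\Omega_\ia}+\|\hatv\|^2_{j_{h,\Fhi}}=\tn v\tn^2_{h,\ia}$.

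The main obstacle is the step passing from the cut elements to the interior: one must ensure the finite overlap of the neighborhoods $\mcN_l(\hatK)$ (so the sum does not accumulate an $h$-dependent constant), that the face set $\bigcup_\hatK\mcF(\mcN_l(\hatK))$ is contained in (a bounded number of copies of) $\Fhi$ so the stabilization terms genuinely sum to $\|\hatv\|^2_{j_{h,\Fhi}}$, and that Assumption~\ref{ass:patches} is invoked correctly to guarantee a ``good'' interior element within distance $l$. The curved-geometry and trace-inequality steps are routine given the uniform metric bounds; the bookkeeping in the ghost-penalty chaining is where care is needed, but it is standard CutFEM machinery and I would present it compactly. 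A minor technical care point is that one should actually chain the \emph{gradient} version of Lemma~\ref{lem:inverse-ghost}, i.e. \eqref{eq:inverse-ghost}, across the neighborhood, which is legitimate since consecutive elements in the chain share a face of $\Fhi$.
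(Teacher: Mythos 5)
Your proposal follows essentially the same route as the paper's proof: reduce to the reference gradient on $\widehat{\Gamma}_\ia$ via the uniform metric bounds, localize over the cut elements, apply the elementwise discrete trace inequality $h\|\hatnabla\hatv\|^2_{\hatGamma_\ia\cap\hatK}\lesssim\|\hatnabla\hatv\|^2_{\hatK}$, and then chain the gradient version of Lemma~\ref{lem:inverse-ghost} (using Assumption~\ref{ass:patches}) to reach $\|\nabla v\|^2_{\Omega_\ia}+\|\hatv\|^2_{j_{h,\Fhi}}$. The bookkeeping points you flag (finite overlap of the neighborhoods, faces lying in $\Fhi$) are exactly what the paper handles implicitly, so the plan is sound.
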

\begin{proof} 
We proceed as follows
\begin{align}\label{eq:inv-a}
h \| n_\ia \cdot \nabla v \|^2_{\Gamma_\ia} 
&\leq
h \| \nabla v \|^2_{\Gamma_\ia}
\\ \label{eq:inv-b}
&= \sum_{K \in \mcK_{h,i}(\Gamma_\ia)}
h \|\nabla v \|^2_{\Gamma_\ia \cap K}
\\ \label{eq:inv-c}
&\lesssim   \sum_{K \in \mcK_{h,i}(\Gamma_\ia)}
h \|\hatnabla \hatv \|^2_{\hatGamma_\ia \cap \hatK}
\\ \label{eq:inv-d}
&\lesssim  \sum_{K \in \mcK_{h,i}(\Gamma_\ia)}
\|\hatnabla \hatv \|^2_{ \hatK}
\\ \label{eq:inv-e}
&\lesssim   \sum_{K \in \mcK_{h,i}(\Gamma_\ia)}
\Big( \|\hatnabla \hatv \|^2_{\mcN_l(\hatK)} 
+ \| \hatv \|^2_{j_h,\mcF(\mcN_l(\hatK))}\Big)
\\ \label{eq:inv-f}
&\lesssim   \sum_{K \in \mcK_{h,i}(\Gamma_\ia)}  \|\nabla v \|^2_{\mcN_l(K)} 
+  \| \hatv \|^2_{j_h,\mcF(\mcN_l(\hatK))}
\\ \label{eq:inv-g}
&\lesssim \|\nabla v\|^2_{\Omega_\ia} 
+ \|\hatv \|^2_{j_{h,\Fhi}}
\end{align}
where in (\ref{eq:inv-a}) we used the Cauchy--Schwarz inequality; 
in (\ref{eq:inv-b}) we divided the integral into element contributions; 
in (\ref{eq:inv-c}) we mapped to reference coordinates and used the 
bound
\begin{align}
\|\nabla v \|^2_{\Gamma_\ia \cap K} 
&= \int_{\hatGamma_\ia \cap \hatK} \|\hatG^{-1} \hatnabla \hat v\|^2_{\hatg} 
\|\tau\|_{\hatg}
 \, d\hatgamma
\\ 
& \leq \int_{\hatGamma_\ia \cap \hatK}  \lambda_\mathrm{max}(\hatG^{-1} ) \|\hatnabla \hat v\|^2_{\IR^2} \|\tau\|_{\hatg} 
 \, d\hatgamma
\\
&\leq 
\underbrace{ \left( \sup_{x \in \hatGamma_\ia \cap \hatK } \frac{\|\tau\|_{\hatg}}{\lambda_\mathrm{min}( \hatG ) }  \right)  }_{\lesssim 1}
 \int_{\hatGamma_\ia \cap \hatK} \|\hatnabla \hat v\|^2_{\IR^2} 
 \, d\hatgamma
\\&
\lesssim
\|\hatnabla \hatv \|^2_{\hatGamma_\ia \cap \hatK} 
\end{align}
In (\ref{eq:inv-d}) we used the following inverse trace inequality
\begin{equation}\label{eq:trace-internal}
h\| \hatnabla \hatv \|^2_{\hatGamma_\ia \cap \hatK} 
\lesssim \|\hatnabla \hatv \|^2_\hatK
\end{equation} 
which holds for $\hatv \in \hatV_{h,i}|_\hatK$ and we verify below;
in (\ref{eq:inv-e}) we used estimate (\ref{eq:inverse-ref}); 
in (\ref{eq:inv-f}) 
we used the bound
\begin{align}
\|\hatnabla \hatv \|^2_{\hat K} 
&= \int_{\hatK} \| \hatnabla \hatv \|^2_{\IR^2} \, d\hatx
\\&
\leq \int_{\hatK} \lambda_\mathrm{min}(\hatG^{-1}) \| \hatG^{-1} \hatnabla \hatv \|^2_\hatg \, d\hatx
\\&
\leq \underbrace{\left\| \bigl(|\hatG|^{1/2} \lambda_\mathrm{max}(\hatG) \bigr)^{-1} \right\|_{L^\infty(\hatK)}}_{\lesssim 1} 
\int_{\hatK} \| \widehat{\nabla v} \|_{\hatg}^2 |\hatG|^{1/2} \, d\hatx
\lesssim 
\|\nabla v \|^2_{K} 
\end{align}
for each of the elements in $\mcN_l(\hatK)$; and finally in \eqref{eq:inv-g} we used \eqref{eq:inverse-ref}.

\paragraph{Verification of (\ref{eq:trace-internal}).}  We have the bounds
\begin{equation}
h\| \hatnabla \hatv \|^2_{\hatGamma_\ia \cap \hatK} 
\leq 
h \underbrace{|\hatGamma_\ia \cap \hatK|}_{\lesssim h} \, 
\| \hatnabla \hatv \|^2_{L^\infty(\hatGamma_\ia \cap \hatK)} 
\leq
h^2 
\| \hatnabla \hatv \|^2_{L^\infty(\hatK)} 
\lesssim 
h^{-1} \| \hatnabla \hatv \|^2_{\hatK}
\end{equation}
where we used the fact that the length $\hatGamma_\ia \cap \hatK$ 
of the curve segment  $|\hatGamma_\ia \cap \hatK| \lesssim h$ for 
$h \in (0,h_0]$, with $h_0$ small enough, which holds since 
$\Gamma_\ia$ consists of a finite set of smooth curve segments,
and at last we used an inverse bound to estimate the $L^\infty$ norm in 
terms of the $L^2$ norm.
\end{proof}

\subsection{Coercivity and Continuity}
\begin{lem}[Coercivity and Continuity] The form $A_h$ satisfies:\\
(i) For large enough
$\glue$ it holds for all $v \in V_h$,
\begin{equation}\label{eq:coercivity}
\tn v \tn^2_h \lesssim A_h(v,v)
\end{equation}
(ii) For all  
$v,w \in V_h + H^{3/2+\epsilon}(\Omega)$ it holds
\begin{equation}\label{eq:continuity}
| A_h(v,w) | \lesssim \tn v \tn_h \tn w \tn_h 
\end{equation}
\end{lem}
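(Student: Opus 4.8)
The plan is to prove coercivity and continuity by the standard Nitsche machinery, but carefully using the two inverse inequalities established above (Lemma~\ref{lem:inverse-ghost} and Lemma~\ref{lem:inverse}) to deal with the cut elements and the stabilization term. For part (i), I would start by writing out $A_h(v,v) = \sum_i \|\nabla v\|^2_{\Omega_i} - 2\sum_{(i,j)} (\{n\cdot\nabla v\},\llb v\rrb)_{\Gamma_{ij}} + \sum_{(i,j)} \glue h^{-1}\|\llb v\rrb\|^2_{\Gamma_{ij}} + j_h(v,v)$. The only term without a definite sign is the Nitsche consistency term, which I would bound using Cauchy--Schwarz followed by a weighted Young inequality: for any $\delta>0$,
\begin{equation}
2|(\{n\cdot\nabla v\},\llb v\rrb)_{\Gamma_{ij}}| \le \delta h \|\{n\cdot\nabla v\}\|^2_{\Gamma_{ij}} + \delta^{-1} h^{-1}\|\llb v\rrb\|^2_{\Gamma_{ij}}.
\end{equation}
Summing over interfaces and invoking the inverse inequality \eqref{eq:inverse} from Lemma~\ref{lem:inverse}, which controls $h\|n_i\cdot\nabla v\|^2_{\Gamma_i}$ by $\tn v\tn^2_{h,i} = \|\nabla v\|^2_{\Omega_i} + \|\hatv\|^2_{j_{h,\Fhi}}$, the term $\delta h\|\{n\cdot\nabla v\}\|^2_{\Gamma_{ij}}$ is absorbed into a small multiple of $\sum_i\tn v\tn^2_{h,i}$. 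Choosing $\delta$ small and then $\glue$ large enough that $\glue - \delta^{-1} \gtrsim 1$, the remaining $h^{-1}\|\llb v\rrb\|^2$ contribution stays positive, and collecting terms yields $A_h(v,v)\gtrsim \sum_i\|\nabla v\|^2_{\Omega_i} + \sum_i\|\hatv\|^2_{j_{h,\Fhi}} + \sum_{(i,j)} h^{-1}\|\llb v\rrb\|^2_{\Gamma_{ij}}$. Comparing with the definition \eqref{eq:enNorm} of $\tn\cdot\tn_h$, the only remaining piece is the term $\sum_{(i,j)} h\|\{n\cdot\nabla v\}\|^2_{\Gamma_{ij}}$, which is again bounded by $\sum_i\tn v\tn^2_{h,i}$ via Lemma~\ref{lem:inverse}; hence $\tn v\tn^2_h \lesssim A_h(v,v)$.

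For part (ii), continuity, I would bound each of the four groups of terms in $A_h(v,w)$ directly by $\tn v\tn_h\tn w\tn_h$. The bulk term $\sum_i(\nabla v,\nabla w)_{\Omega_i}$ and the penalty term $\sum_{(i,j)}\glue h^{-1}(\llb v\rrb,\llb w\rrb)_{\Gamma_{ij}}$ are immediate by Cauchy--Schwarz since both factors appear explicitly in the energy norm. The stabilization term $j_h(v,w) = \sum_i j_{h,i}(\hatv,\hatw)$ is handled by Cauchy--Schwarz in the bilinear form $j_{h,i}$, giving $|j_{h,i}(\hatv,\hatw)| \le \|\hatv\|_{j_{h,\Fhi}}\|\hatw\|_{j_{h,\Fhi}}$. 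For the Nitsche consistency terms $(\{n\cdot\nabla v\},\llb w\rrb)_{\Gamma_{ij}}$ I would split the weight as $h^{1/2}\{n\cdot\nabla v\}$ against $h^{-1/2}\llb w\rrb$ and apply Cauchy--Schwarz, producing $h^{1/2}\|\{n\cdot\nabla v\}\|_{\Gamma_{ij}} \cdot h^{-1/2}\|\llb w\rrb\|_{\Gamma_{ij}}$, both of which are dominated by $\tn v\tn_h$ and $\tn w\tn_h$ respectively (the first directly from \eqref{eq:enNorm}). Summing over interfaces and using the discrete Cauchy--Schwarz inequality on the resulting sums of products completes the estimate.

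The main subtlety — not a deep obstacle, but the point requiring care — is that the estimates must hold for the \emph{enlarged} space $V_h + H^{3/2+\epsilon}(\Omega)$ in part (ii), so the inverse inequality \eqref{eq:inverse} (valid only for discrete functions) cannot be used there; instead the energy norm itself already contains the term $h\|\{n\cdot\nabla v\}\|^2_{\Gamma_{ij}}$, so continuity of the Nitsche term follows without any inverse estimate, and the trace term $n\cdot\nabla v|_{\Gamma_{ij}}$ is well-defined for $v\in H^{3/2+\epsilon}$ by the trace theorem. For coercivity in part (i), by contrast, one restricts to $v\in V_h$ precisely so that Lemma~\ref{lem:inverse} applies and the consistency term can be absorbed; the dependence of the hidden constants on $\glue$, the metric eigenvalue bounds $c,C$ of $\hatG$, and the stabilization parameters $\gamma_k$ should be tracked to confirm they are independent of $h$. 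I would also note in passing that a Poincaré-type inequality is not needed here since the energy norm $\tn\cdot\tn_h$ is being compared against $A_h$ on all of $V_h$ (including the constants), with the quotient by $\IR$ entering only at the level of well-posedness via \eqref{eq:galort}.
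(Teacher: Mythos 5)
Your proposal is correct and follows essentially the same route as the paper, which proves (i) by absorbing the Nitsche consistency term via the inverse inequality \eqref{eq:inverse} of Lemma~\ref{lem:inverse} together with the standard Young-inequality argument and a sufficiently large $\glue$, and (ii) directly by Cauchy--Schwarz. You have merely written out in detail the ``standard arguments'' the paper leaves implicit, including the correct observation that Lemma~\ref{lem:inverse} is only needed (and only available) for the discrete functions in part (i).
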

\begin{proof} The first statement (i) follows directly from the inverse 
inequality (\ref{eq:inverse}) together with standard arguments, see 
for instance \cite{LarBen}. The second statement (ii) follows directly 
from the Cauchy--Schwarz inequality.
\end{proof}

\subsection{Interpolation}

Let $\widehat{\pi}_{h,\ia}: L^2(\widehat{\mcK}_{h,\ia}) \rightarrow 
\widehat{V}_{h,\ia}$ be the Scott--Zhang interpolation operator, see 
\cite{ScottZhang}. We recall the standard interpolation error estimate
\begin{equation}\label{eq:interpol-elem}
\|\hatu - \widehat{\pi}_{h,\ia} \hatu\|_{H^s(\widehat{K})} 
\lesssim 
h^{p+1 - s} \| \hatu \|_{H^{p+1}(\mcN(\widehat{K}))} 
\end{equation} 
where $\mcN(\hatK) \subset \hatmcK_{h,\ia}$ is the set of elements 
in $\hatmcK_{h,\ia}$ that are neighbors to $\hatK$. Next we note 
that for $0<h\leq h_0$, with $h_0$ small enough, we have 
\begin{equation}\label{eq:interpol-a}
\hatmcK_{h,\ia} \subset U_\delta(\hatOmega_\ia)
\end{equation}
where $U_\delta(\hatOmega_\ia) = \{x \in I^2 \,|\, d(x,\hatOmega_\ia)<\delta \}$, see Section \ref{sec:defsurface}. We 
define the global interpolation operator $\pi_h:L^2(\Omega) \rightarrow V_h$ 
as follows
\begin{equation}\label{eq:interpolant-global}
\widehat{(\pi_h v)}_\ia = \widehat{\pi}_{h,\ia} \hatv_\ia 
\end{equation}
where we used the fact that $\hatv_\ia$ is defined on $U_\delta(\hatOmega_\ia)$ and therefore the right hand side 
of (\ref{eq:interpolant-global}) is well defined due to (\ref{eq:interpol-a}).

\begin{lem}[Interpolation Error Estimate]\label{lem:interpol-energy} 
The interpolation operator $\pi_h$ defined by (\ref{eq:interpolant-global}) 
satisfies 
\begin{equation}\label{eq:interpol-energy}
\tn u - \pi_h u \tn_h \lesssim h^{p} \| \hatu \|_{H^{p+1}(\widehat{\Omega}_\ia)}
\end{equation}
\end{lem}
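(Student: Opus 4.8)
The energy norm $\tn u - \pi_h u\tn_h$ has three types of contributions that I would estimate one at a time: the patchwise gradient term $\|\nabla(u-\pi_h u)\|_{\Omega_\ia}$, the stabilization seminorm $\|\widehat{u-\pi_h u}\|_{j_{h,\Fhi}}$, and the two interface terms $h^{1/2}\|\{n\cdot\nabla(u-\pi_h u)\}\|_{\Gamma_{\ia\ib}}$ and $h^{-1/2}\|\llb u-\pi_h u\rrb\|_{\Gamma_{\ia\ib}}$. The unifying idea is to transfer everything to the reference domain, where by definition $\|\nabla v\|_{\Omega_\ia}\simeq\|\hatnabla\hatv\|_{\hatOmega_\ia}$ up to the uniformly bounded metric factors (using the spectral bounds on $\hatG$ established in Section~\ref{sec:defsurface}), and then apply the Scott--Zhang estimate \eqref{eq:interpol-elem} element by element. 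Since $\hatu\in H^{p+1}$ extends to the enlarged domain $U_\delta(\hatOmega_\ia)$ and, by \eqref{eq:interpol-a}, the active mesh $\hatmcK_{h,\ia}$ lies inside $U_\delta(\hatOmega_\ia)$ for $h$ small, the interpolant $\widehat\pi_{h,\ia}\hatu$ is well defined on every active element and \eqref{eq:interpol-elem} is applicable there; summing over $\hatK\in\hatmcK_{h,\ia}$ and using finite overlap of the patches $\mcN(\hatK)$ gives
\begin{equation*}
\|\hatnabla(\hatu-\widehat\pi_{h,\ia}\hatu)\|_{\hatmcK_{h,\ia}}\lesssim h^{p}\|\hatu\|_{H^{p+1}(\hatmcK_{h,\ia})}\lesssim h^{p}\|\hatu\|_{H^{p+1}(\hatOmega_\ia)},
\end{equation*}
which controls the bulk gradient term; here the last step uses that $\hatmcK_{h,\ia}\subset U_\delta(\hatOmega_\ia)$ and the $H^{p+1}$-extension is bounded by $\|\hatu\|_{H^{p+1}(\hatOmega_\ia)}$ (in the smooth-interface case the extension is the restriction of $u$ itself, in the sharp-interface case it is the extension $E^*u$ set up in Section~\ref{sec:defsurface}).

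For the stabilization seminorm I would bound each face term $h^{2k-1}\|\llb\hatD^k_n(\hatu-\widehat\pi_{h,\ia}\hatu)\rrb\|_{\widehat F}^2$ for $\widehat F\in\Fhi$. Since $\hatu$ itself is globally $H^{p+1}$ and hence the jump $\llb\hatD^k_n\hatu\rrb$ across an interior face vanishes, we have $\llb\hatD^k_n(\hatu-\widehat\pi_{h,\ia}\hatu)\rrb=-\llb\hatD^k_n\widehat\pi_{h,\ia}\hatu\rrb=\llb\hatD^k_n(\hatu-\widehat\pi_{h,\ia}\hatu)\rrb$ expressed purely through the interpolation error on the two elements sharing $\widehat F$; applying a trace inequality on each element to pass from the face $\widehat F$ to the adjacent elements, then an inverse inequality to move the $k$ normal derivatives onto $\hatu-\widehat\pi_{h,\ia}\hatu$ in $H^1$, and finally \eqref{eq:interpol-elem} with $s=1$, yields a contribution of order $h^{2k-1}\cdot h^{-1}\cdot h^{2(p-k)}\cdot h^{-2(k-1)}=h^{2p}$ — wait, more carefully: trace contributes $h^{-1}$, each inverse step removing a derivative contributes $h^{-2}$ so $k-1$ of them removing down to first order gives $h^{-2(k-1)}$, and \eqref{eq:interpol-elem} with $s=1$ gives $h^{2(p-1)}$ times $\|\hatu\|_{H^{p+1}}^2$, so the total is $h^{2k-1}h^{-1}h^{-2(k-1)}h^{2(p-1)}=h^{2p-2}$. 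To gain the missing two powers I should instead keep one derivative inside the norm and use $s=k$ in \eqref{eq:interpol-elem}: trace $h^{-1}$ times $\|\hatu-\widehat\pi_{h,\ia}\hatu\|_{H^k(\mcN(\widehat K))}^2\lesssim h^{2(p+1-k)}\|\hatu\|_{H^{p+1}}^2$ gives $h^{2k-1}h^{-1}h^{2(p+1-k)}=h^{2p}$, as required. Summing over the bounded number of faces in $\Fhi$ and over $k=1,\dots,p$ gives $\|\widehat{u-\pi_h u}\|_{j_{h,\Fhi}}\lesssim h^{p}\|\hatu\|_{H^{p+1}(\hatOmega_\ia)}$.

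For the interface terms I would again map to the reference curve $\hatGamma_{\ia\ib}^\ia$ and use that the metric-weighted arclength measure is uniformly comparable to the Euclidean arclength (spectral bounds on $\hatG$). The flux term $h^{1/2}\|\{n\cdot\nabla(u-\pi_h u)\}\|_{\Gamma_{\ia\ib}}$ is dominated, via the trace inequality \eqref{eq:trace-internal} established in the proof of Lemma~\ref{lem:inverse} (valid for polynomials; for the smooth part $\hatu$ one uses a standard continuous trace inequality $\|\hatnabla\hatu\|_{\hatGamma\cap\hatK}^2\lesssim h^{-1}\|\hatnabla\hatu\|_{\hatK}^2+h\|\hatnabla^2\hatu\|_{\hatK}^2$), which converts $h^{1/2}\|\hatnabla(\hatu-\widehat\pi_{h,\ia}\hatu)\|_{\hatGamma_{\ia\ib}^\ia\cap\hatK}$ into $\|\hatnabla(\hatu-\widehat\pi_{h,\ia}\hatu)\|_{\hatK}$ plus higher-order terms, already bounded by $h^{p}\|\hatu\|_{H^{p+1}}$ above. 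For the jump term, since $\llb u\rrb=0$ on the interface (because $u\in V$), $\llb u-\pi_h u\rrb=-\llb\pi_h u\rrb=\llb u-\pi_h u\rrb$ is again just the interpolation error, and $h^{-1/2}\|u-\pi_h u\|_{\Gamma_{\ia\ib}}\lesssim h^{-1/2}(h^{-1/2}\|u-\pi_h u\|_{\Omega}+h^{1/2}\|\nabla(u-\pi_h u)\|_{\Omega})$ by a trace inequality, giving $h^{-1}h^{p+1}\|\hatu\|_{H^{p+1}}+h^{p}\|\hatu\|_{H^{p+1}}=O(h^p)$ after using \eqref{eq:interpol-elem} with $s=0$ and $s=1$.

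\textbf{Main obstacle.} The routine parts are the scaling bookkeeping above; the genuinely delicate point is the well-definedness and stability of the interpolant near trimmed interfaces, i.e. that the Scott--Zhang operator $\widehat\pi_{h,\ia}$ acting on the whole active mesh $\hatmcK_{h,\ia}$ — which sticks out of $\hatOmega_\ia$ — still satisfies \eqref{eq:interpol-elem} with the $H^{p+1}$-norm taken over $\hatmcK_{h,\ia}$ and then controlled by $\|\hatu\|_{H^{p+1}(\hatOmega_\ia)}$. This is exactly what the $U_\delta$-extension machinery and the inclusion \eqref{eq:interpol-a} are designed to handle, together with the distinction between smooth interfaces (where $u$ genuinely lives on a neighbourhood) and sharp interfaces (where one must invoke the extension $E^*u$ across the closed smooth curve $\partial\Omega_i$). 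Getting the stabilization-term power of $h$ right — using $s=k$ rather than $s=1$ in the interpolation estimate — is the other place where a careless argument loses a factor and must be done with care.
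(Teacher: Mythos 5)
Your proposal follows essentially the same route as the paper's proof: Scott--Zhang interpolation on the enlarged reference meshes via the $U_\delta$-extension and \eqref{eq:interpol-a}, the face trace inequality combined with \eqref{eq:interpol-elem} at order $s=k$ (and $s=k+1$) for the stabilization seminorm, and trace inequalities on the cut boundary reducing the interface terms to element contributions. One small correction: the inequality you call a ``standard continuous trace inequality'' on $\hatGamma_\ia\cap\hatK$ is precisely the cut-position-independent estimate \eqref{eq:trace-cut2} used in the paper, and it must be applied directly to the whole error $u-\pi_h u$ (which lies in $H^2$ elementwise), not by splitting off the polynomial part and invoking \eqref{eq:trace-internal} separately, since a triangle-inequality split of $u$ and $\pi_h u$ before taking traces would leave $O(1)$ terms and destroy the $O(h^p)$ bound.
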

\begin{proof} Let $\rho = u - \pi_h u$. For each 
$\widehat{F} \in \widehat{\mcF}_{h,\ia}$ there are two neighboring 
elements $\widehat{K}_1$ and $\widehat{K}_2$ in $\widehat{\mcK}_{h,\ia}$. 
Using the triangle inequality followed by the trace inequality 
\begin{equation}\label{eq:trace-face}
\| \hatv \|^2_{\hatF} \lesssim h^{-1} \| \hatv \|_\hatK^2 + h \| \hatnabla \hatv \|^2_\hatK 
\qquad 
\hatv \in H^1(\hatK)
\end{equation}
and the interpolation estimate (\ref{eq:interpol-elem}) we obtain
\begin{align}
\| \widehat{\rho} \|_{j_h,\widehat{F}}^2
&=
\sum_{k=1}^p h^{2k-1} \|\llb \widehat{D}^k_n {\widehat{\rho}}\rrb\|^2_{\widehat{F}}
\\
&\leq 
\sum_{k=1}^p h^{2k-1} \sum_{j=1}^2 \|\widehat{D}^k_n {\widehat{\rho}_j}\|^2_{\widehat{F}} 
\\
&\lesssim 
\sum_{j=1}^2 \sum_{k=1}^p h^{2k-1}\Big( 
h^{-1} \|\widehat{D}^{k}_n {\widehat{\rho}_j}\|^2_{\widehat{K}_j} 
+ h \|\widehat{D}^{k+1}_n {\widehat{\rho}_j}\|^2_{\widehat{K}_j}\Big)
\\
&\lesssim 
\sum_{j=1}^2 \sum_{k=1}^p 
h^{2p} \| {\widehat{u}_j}\|^2_{H^{p+1}(\mcN(\widehat{K}_j))} 
\end{align}
Thus we conclude that
\begin{equation}
\| \widehat{\rho}\|^2_{j_{h,\ia}} \lesssim h^{2p} 
\| {\widehat{u}}\|^2_{H^{p+1}(\widehat{\Omega}_\ia)}
\end{equation}
Next we recall the following trace inequality 
\begin{equation}
h \| \hatv \|^2_{\hatK\cap \hatGamma_\ia}
\lesssim \| \hatv \|^2_\hatK + h^2 \| \hatnabla \hatv \|^2_\hatK 
\qquad \hatv \in H^1(\hatK),\;\; \hatK \in \hatmcK_{h,\ia} \label{eq:trace-cut2}
\end{equation}
which holds independent of the position of $\hatGamma_\ia$ 
in $\hatK$, see \cite{HanHanLar}. Introducing the notation 
\begin{equation}
\hatmcK_{h,\ia}(\hatGamma_\ia) 
= \{ \hatK \in \hatmcK_{h,\ia} : 
 \hatK \cap \hatGamma_\ia \neq \emptyset 
\} 
\end{equation}
we may estimate the remaining terms in the energy norm~\eqref{eq:enNorm} using 
the triangle inequality in~\eqref{proof:intp_eq1}, the trace inequality~\eqref{eq:trace-cut2} in \eqref{proof:intp_eq2}  and 
the interpolation estimate (\ref{eq:interpol-elem}) in~\eqref{proof:intp_eq3} as follows
\begin{align}
\tn \rho \tn_h^2 &\lesssim 
\sum_{\ia \in \Ii} \|\nabla \rho \|^2_{\Omega_\ia} + \|\widehat{\rho}\|^2_{j_{h,\ia}} 
+ h\|n_\ia \cdot \nabla \rho_\ia\|^2_{\Gamma_\ia} 
+ h^{-1} \|\rho_\ia\|^2_{\Gamma_\ia} \label{proof:intp_eq1}
\\
&\lesssim 
\sum_{\ia \in \Ii} \|\hatnabla \widehat{\rho} \|^2_{\hatOmega_\ia} + \|\widehat{\rho}\|^2_{j_{h,\ia}} 
+ h\|\hatnabla \widehat{\rho}_\ia\|^2_{\hatGamma_\ia} 
+ h^{-1} \|\widehat{\rho}_\ia\|^2_{\hatGamma_\ia}
\\
&\lesssim 
\sum_{\ia \in \Ii} \|\hatnabla \widehat{\rho} \|^2_{\hatOmega_\ia} 
+ \|\widehat{\rho}\|^2_{j_{h,\ia}} 
+ \|\hatnabla  \widehat{\rho}_\ia\|^2_{\widehat{\mcK}_{h,\ia}(\widehat{\Gamma}_\ia)} 
+ h^2\|\hatnabla^2 \widehat{\rho}_\ia\|^2_{\widehat{\mcK}_{h,\ia}(\widehat{\Gamma}_\ia)} \label{proof:intp_eq2}
\\ \nonumber 
&\qquad \quad
+ h^{-2} \|\widehat{\rho}_\ia\|^2_{\widehat{\mcK}_{h,\ia}(\widehat{\Gamma}_\ia)} 
+ \|\hatnabla \widehat{\rho}_\ia\|^2_{\widehat{\mcK}_{h,\ia}(\widehat{\Gamma}_\ia)} 
\\ \label{proof:intp_eq3}
&\lesssim h^{2p} \| \hatu \|_{H^{p+1}(\widehat{\Omega}_\ia)}^2   
\end{align}
which concludes the proof.
\end{proof}

\subsection{Error Estimates}

\begin{thm}[Energy Error Estimate]\label{thm:energy}
Let $u\in H^{p+1}(\Omega)$ be the solution to (\ref{eq:lb}) and $u_h$ 
the solution to (\ref{eq:method}), then 
\begin{align}\label{thmA:a}
\tn u - u_h \tn_h \lesssim h^{p} \sum_{i\in\mcI_\Omega} \| \hatu \|_{H^{p+1}(\widehat{\Omega}_\ia)}
\end{align}
\end{thm}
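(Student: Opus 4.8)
The plan is to follow the standard Strang-type argument for consistent Galerkin methods with the energy norm $\tn \cdot \tn_h$: split the error as $u - u_h = (u - \pi_h u) + (\pi_h u - u_h)$, bound the interpolation part by Lemma~\ref{lem:interpol-energy}, and control the discrete part $e_h := \pi_h u - u_h \in V_h$ by coercivity and Galerkin orthogonality. Concretely, I would first use the triangle inequality $\tn u - u_h \tn_h \leq \tn u - \pi_h u \tn_h + \tn \pi_h u - u_h \tn_h$, where the first term is already $\lesssim h^p \sum_i \|\hatu\|_{H^{p+1}(\hatOmega_\ia)}$ by~\eqref{eq:interpol-energy}. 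For the second term, apply coercivity~\eqref{eq:coercivity} to get $\tn e_h \tn_h^2 \lesssim A_h(e_h, e_h)$, then write $A_h(e_h, e_h) = A_h(\pi_h u - u, e_h) + A_h(u - u_h, e_h)$. The last term vanishes by Galerkin orthogonality~\eqref{eq:galort} since $e_h \in V_h$, so $\tn e_h \tn_h^2 \lesssim A_h(\pi_h u - u, e_h)$.

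The remaining step is to bound $A_h(\pi_h u - u, e_h)$ by continuity. Here one must be slightly careful: the continuity estimate~\eqref{eq:continuity} is stated for arguments in $V_h + H^{3/2+\epsilon}(\Omega)$, and since $u \in H^{p+1}(\Omega) \subset H^{3/2+\epsilon}(\Omega)$ for $p \geq 1$ and $\pi_h u, e_h \in V_h$, both arguments lie in this space, so $|A_h(\pi_h u - u, e_h)| \lesssim \tn \pi_h u - u \tn_h \tn e_h \tn_h$. Combining, $\tn e_h \tn_h^2 \lesssim \tn \pi_h u - u \tn_h \tn e_h \tn_h$, hence $\tn e_h \tn_h \lesssim \tn \pi_h u - u \tn_h \lesssim h^p \sum_i \|\hatu\|_{H^{p+1}(\hatOmega_\ia)}$ again by Lemma~\ref{lem:interpol-energy}. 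Adding the two contributions via the triangle inequality yields the claimed estimate~\eqref{thmA:a}.

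The only subtlety I anticipate — and it is minor — is making sure the energy norm $\tn \cdot \tn_h$ is actually finite on $u$, i.e. that the face terms $\|\hatv\|_{j_{h,\Fhi}}^2$ and the trace terms $h\|\{n\cdot\nabla v\}\|_{\Gamma_{ij}}^2$ make sense for the exact solution; this is where the elliptic regularity $u|_{\Omega_i} \in H^{p+1}(\Omega_i)$ and the $H^{p+1}$-extension assumption from Section~\ref{sec:defsurface} are used, together with the fact that $u$ satisfies the interface conditions~\eqref{eq:djngj1}--\eqref{eq:djngj2} exactly, so $\llb u \rrb = 0$ and the flux is continuous across interfaces. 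Everything else is a direct assembly of results already proved in the excerpt, so I expect no real obstacle; the main content of the theorem is packaged entirely inside Lemma~\ref{lem:interpol-energy} and the coercivity/continuity lemma.

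\begin{proof}
Write $u - u_h = (u - \pi_h u) + (\pi_h u - u_h)$ and set $e_h = \pi_h u - u_h \in V_h$. By the triangle inequality,
\begin{equation}
\tn u - u_h \tn_h \leq \tn u - \pi_h u \tn_h + \tn e_h \tn_h
\end{equation}
The first term is bounded by Lemma~\ref{lem:interpol-energy}. For the second, coercivity~\eqref{eq:coercivity} gives
\begin{equation}
\tn e_h \tn_h^2 \lesssim A_h(e_h,e_h) = A_h(\pi_h u - u, e_h) + A_h(u - u_h, e_h)
\end{equation}
Since $e_h \in V_h$, Galerkin orthogonality~\eqref{eq:galort} yields $A_h(u - u_h, e_h) = 0$. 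Using continuity~\eqref{eq:continuity}, which applies since $u \in H^{p+1}(\Omega) \subset H^{3/2+\epsilon}(\Omega)$ and $\pi_h u, e_h \in V_h$, we obtain
\begin{equation}
\tn e_h \tn_h^2 \lesssim |A_h(\pi_h u - u, e_h)| \lesssim \tn \pi_h u - u \tn_h \tn e_h \tn_h
\end{equation}
and hence $\tn e_h \tn_h \lesssim \tn u - \pi_h u \tn_h$. Combining the two estimates and invoking Lemma~\ref{lem:interpol-energy} once more gives
\begin{equation}
\tn u - u_h \tn_h \lesssim \tn u - \pi_h u \tn_h \lesssim h^p \sum_{i\in\mcI_\Omega} \| \hatu \|_{H^{p+1}(\widehat{\Omega}_\ia)}
\end{equation}
which is the claimed estimate.
\end{proof}
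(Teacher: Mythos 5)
Your proof is correct and follows essentially the same argument as the paper: triangle inequality around the interpolant, coercivity plus Galerkin orthogonality and continuity for the discrete part, and Lemma~\ref{lem:interpol-energy} for both pieces. The extra remarks about continuity applying on $V_h + H^{3/2+\epsilon}(\Omega)$ and the finiteness of the energy norm on $u$ are fine but do not change the route.
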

\begin{proof} Adding and subtracting the interpolant we have 
\begin{align}\label{thmA:b}
\tn u - u_h \tn_h &\leq \tn u - \pi_h u  \tn_h + \tn \pi_h u - u_h \tn_h
\\ \label{thmA:c}
&\lesssim h^{p} \| \hatu \|_{H^{p+1}(\widehat{\Omega}_\ia)} + \tn \pi_h u - u_h \tn_h
\end{align}
where we used the interpolation error estimate (\ref{eq:interpol-energy}). For the 
second term we have, using the notation $e_h = \interp_h u - u_h$ and Galerkin orthogonality (\ref{eq:galort}), 
\begin{align}\label{thmA:d}
\tn e_h \tn_h^2 &\lesssim A_h(e_h,e_h) 
\\ \label{thmA:e}
&= A_h(\pi_h u - u,e_h)
\\ \label{thmA:f}
&\lesssim 
\tn \pi_h u - u \tn_h \tn e_h \tn_h 
\end{align}
Thus we conclude that 
\begin{align}\label{thmA:g}
\tn e_h \tn_h \lesssim \tn \pi_h u - u \tn_h 
\lesssim h^{p} \| \hatu \|_{H^{p+1}(\widehat{\Omega}_\ia)}
\end{align}
where we finally used the interpolation estimate (\ref{eq:interpol-energy}) again. 
Together (\ref{thmA:c}) and (\ref{thmA:g}) concludes the proof.
\end{proof}

\begin{thm}[${\boldsymbol L}^{\boldsymbol 2}$ Error Estimate]\label{thm:L2}
Let $u$ be the solution to (\ref{eq:lb}) and $u_h$ 
the solution to (\ref{eq:method}), then 
\begin{equation}
\| u - u_h \|_\Omega \lesssim h^{p+1} \| \hatu \|_{H^{p+1}(\widehat{\Omega}_\ia)}
\end{equation}
\end{thm}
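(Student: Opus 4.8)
The plan is a duality (Aubin--Nitsche) argument; the only nonstandard ingredient is the treatment of the stabilization form $j_h$, which cannot be applied directly to the dual solution since the latter will only be $H^2$ on each patch. First I would fix the representative of $u_h$ with $(u_h,1)_\Omega=0$, so that $e=u-u_h$ has vanishing mean, and introduce the dual problem: find $\phi\in V$ with $(\phi,1)_\Omega=0$ such that $-\Delta\phi=e$ in every $\Omega_i$, $\llb\phi\rrb=0$ on every $\Gamma_{ij}$, and $n_i\cdot\nabla\phi_i+n_j\cdot\nabla\phi_j=0$ on every $\Gamma_{ij}$. By Lax--Milgram this is well posed, and the elliptic regularity estimate (\ref{eq:ellreg}) with $s=0$ gives $\sum_{i}\|\widehat{\phi}\|_{H^2(\widehat{\Omega}_i)}\lesssim\|e\|_\Omega$; note that only $H^2$-per-patch regularity is available, since $e$ lies in $\bigoplus_i H^1(\Omega_i)$ but does not satisfy the interface conditions.

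Next I would establish the identity $\|e\|^2_\Omega=a_h(e,\phi)$. Testing $-\Delta\phi=e$ with $e$, summing over patches, and applying Green's formula (\ref{eq:Greens-formula}) on each $\Omega_i$ produces $\sum_i(\nabla e,\nabla\phi)_{\Omega_i}$ together with interface boundary terms; using the transmission condition $n_j\cdot\nabla\phi_j=-n_i\cdot\nabla\phi_i$ on $\Gamma_{ij}$ these collapse to $-\sum_{(i,j)\in\mcI_\Gamma}(\{n\cdot\nabla\phi\},\llb e\rrb)_{\Gamma_{ij}}$, where $\{n\cdot\nabla\phi\}=n_i\cdot\nabla\phi_i$. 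Comparing with the definition of $a_h$ and using $\llb\phi\rrb=0$, which annihilates the two terms of $a_h$ carrying $\llb\phi\rrb$, yields exactly $a_h(e,\phi)$; all terms are well defined because $\phi|_{\Omega_i}\in H^2(\Omega_i)\subset H^{3/2+\epsilon}(\Omega_i)$. I would then split $a_h(e,\phi)=a_h(e,\phi-\pi_h\phi)+a_h(e,\pi_h\phi)$ and treat the two pieces separately.

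For the interpolated piece, Galerkin orthogonality (\ref{eq:galort}) gives $a_h(e,\pi_h\phi)=A_h(e,\pi_h\phi)-j_h(e,\pi_h\phi)=-j_h(e,\pi_h\phi)$. Since $\widehat{u}|_{\widehat{\Omega}_i}\in H^{p+1}(\widehat{\Omega}_i)$, its normal derivatives of order $\le p$ lie in $H^1$ and hence have no jumps across interior faces, so $j_h(u,\cdot)=0$ and therefore $a_h(e,\pi_h\phi)=j_h(u_h,\pi_h\phi)$, which is bounded by $\|u_h\|_{j_h}\|\pi_h\phi\|_{j_h}$ with $\|v\|_{j_h}^2=j_h(v,v)$. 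Here $\|u_h\|_{j_h}=\|e\|_{j_h}\le\tn e\tn_h\lesssim h^p\sum_i\|\widehat{u}\|_{H^{p+1}(\widehat{\Omega}_i)}$ by Theorem~\ref{thm:energy}, so it remains to prove $\|\pi_h\phi\|_{j_h}\lesssim h\sum_i\|\widehat{\phi}\|_{H^2(\widehat{\Omega}_i)}$. This estimate is the heart of the proof, and the step I expect to be the main obstacle: because $\widehat{D}^k_n\widehat{\phi}$ has no trace for $k\ge2$, the interpolation argument of Lemma~\ref{lem:interpol-energy} does not carry over. For $k=1$ one may still insert $\pm\widehat{D}^1_n\widehat{\phi}$, since $\widehat{D}^1_n\widehat{\phi}\in H^1$, and combine the Scott--Zhang estimate (\ref{eq:interpol-elem}) with a face trace inequality; for $k\ge2$ I would instead use that $\widehat{\pi}_{h,i}\widehat{\phi}$ is a polynomial and estimate $\|\widehat{D}^k_n\widehat{\pi}_{h,i}\widehat{\phi}\|_{\widehat{F}}\lesssim h^{3/2-k}\|\widehat{D}^2\widehat{\pi}_{h,i}\widehat{\phi}\|_{\widehat{K}}$ by inverse and trace inequalities, followed by the $H^2$-stability of the Scott--Zhang operator, $\|\widehat{D}^2\widehat{\pi}_{h,i}\widehat{\phi}\|_{\widehat{K}}\lesssim\|\widehat{\phi}\|_{H^2(\mcN(\widehat{K}))}$. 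In both cases the weight $h^{2k-1}$ compensates exactly and one obtains $\|\widehat{\pi}_{h,i}\widehat{\phi}\|^2_{j_{h,i}}\lesssim h^2\|\widehat{\phi}\|^2_{H^2(\widehat{\Omega}_i)}$.

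For the remaining piece, since $a_h$ contains only bulk gradient terms and interface flux/jump terms, a termwise Cauchy--Schwarz gives $|a_h(e,\phi-\pi_h\phi)|\lesssim\tn e\tn_h\,\tn\phi-\pi_h\phi\tn_{h,*}$, where $\tn\cdot\tn_{h,*}$ denotes the part of the energy norm (\ref{eq:enNorm}) not involving $j_h$; the interpolation bound $\tn\phi-\pi_h\phi\tn_{h,*}\lesssim h\sum_i\|\widehat{\phi}\|_{H^2(\widehat{\Omega}_i)}$ follows exactly as in Lemma~\ref{lem:interpol-energy} with $p$ replaced by $1$ and the $j_h$ contribution deleted, so only $H^2$ regularity of $\phi$ is used. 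Combining the two pieces with the energy estimate $\tn e\tn_h\lesssim h^p\sum_i\|\widehat{u}\|_{H^{p+1}(\widehat{\Omega}_i)}$ and the elliptic regularity bound yields $\|e\|^2_\Omega\lesssim h^{p+1}\big(\sum_i\|\widehat{u}\|_{H^{p+1}(\widehat{\Omega}_i)}\big)\|e\|_\Omega$, and dividing through by $\|e\|_\Omega$ gives the theorem. Everything except the $j_h$-estimate $\|\pi_h\phi\|_{j_h}\lesssim h\sum_i\|\widehat{\phi}\|_{H^2(\widehat{\Omega}_i)}$ is the standard duality machinery already prepared in the preceding lemmas.
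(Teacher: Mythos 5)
Your proposal is correct and follows essentially the same route as the paper's proof: the same dual problem and elliptic regularity, the identity $\|e\|_\Omega^2=a_h(e,\phi)$, the Galerkin-orthogonality split into $a_h(e,\phi-\pi_h\phi)$ plus a stabilization term, and the two bounds $\tn \phi-\pi_h\phi\tn_h$-type $\lesssim h\|e\|_\Omega$ and $\|\pi_h\phi\|_{j_h}\lesssim h\|e\|_\Omega$. The only deviation is technical, in the proof of the latter estimate: the paper subtracts a single polynomial $\widehat{\pi}_F\widehat{\phi}$ on each element pair so that all orders $k$ are handled uniformly by inverse/trace and interpolation estimates, whereas you split $k=1$ (inserting $\pm\widehat{D}^1_n\widehat{\phi}$) from $k\ge 2$ (discrete trace and inverse estimates plus $H^2$-stability of Scott--Zhang); both arguments are valid and give the same bound.
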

\begin{proof} Let $e = u - u_h$ be the error, and $\phi$ the 
solution of the dual problem
\begin{subequations}
\begin{alignat}{2}
-\Delta \phi &= e
\qquad &&\text{in all $\Omega_\ia \in \Patches$}
\\
\left\llb \phi\right\rrb  &= 0 
\qquad &&\text{on all $\Gamma_{\ia\ib} \in \Interfaces$}
\\
{n}_{\ia} \cdot \nabla \phi_\ia +{n}_{\ib} \cdot \nabla \phi_\ib  &= 0
\qquad &&\text{on all $\Gamma_{\ia\ib} \in \mcG$}
\end{alignat}
\end{subequations}
Recall that $\int_{\Omega} u \, dx = \int_\Omega u_h \, dx = 0 \, dx$ and thus $\int_\Omega e =0 \, dx$ and we conclude that the dual problem has a unique solution in $V$, see 
(\ref{eq:spaceV}), that satisfies 
\begin{equation}\label{eq:ellregdual}
\sum_{i\in\mcI_\Omega}\| \phi \|_{H^2(\Omega_i)} \lesssim \|e\|_\Omega
\end{equation}
Multiplying the dual problem by $e$, integrating by parts 
on each subdomain $\Omega_\ia$, and using the interface conditions on $\phi$ we obtain
\begin{align}
\|e \|^2_\Omega
&= \sum_{\ia \in \Ii}  -(e ,\Delta \phi)_{\Omega_i}
\\&
= a_h(e,\phi)
\\& \label{eq:kdjbn}
= a_h(e,\phi - \pi_h \phi) - j_h(e,\pi_h \phi)
\\& \label{eq:ibfdu}
\leq
\| e \|_{a_h} \| \phi -\pi_h\phi \|_{a_h} + \| e \|_{j_h} \| \pi_h\phi \|_{j_h}
\\&
\leq
\tn e \tn_{h} \bigl( \underbrace{\| \phi -\pi_h\phi \|_{a_h}}_{I} + \underbrace{\| \pi_h\phi \|_{j_h}}_{II} \bigr)
\lesssim h^p (I + II)
\end{align}
where we in \eqref{eq:kdjbn} used the Galerkin orthogonality \eqref{eq:galort} to subtract $A_h(e,\pi_h\phi)=0$, and we in \eqref{eq:ibfdu}  applied the Cauchy--Schwarz inequality and also introduced the norms $\| v \|_{a_h}^2 = a_h(v,v)$ and $\| v \|_{j_h}^2 = j_h(v,v)$ induced by their respective forms.
In the final inequality we applied the energy norm estimate Theorem~\ref{thm:energy}.

\paragraph{Estimate of $\boldsymbol I$.}
Using the triangle inequality on the jumps and averages, applying the trace inequality and standard interpolation estimates we obtain
\begin{align}
I &
\lesssim h \sum_{i\in\mcI_\Omega} \| \phi \|_{H^2(\Omega_i)}
\lesssim h \| e \|_\Omega
\end{align}
where we in the last inequality use the elliptic regularity of the dual solution \eqref{eq:ellregdual}.

\paragraph{Estimate of $\boldsymbol I \boldsymbol I$.}

Consider two neighboring elements $\hatK_1,\hatK_2\in\widehat{\mcK}_{h,i}$ sharing face $\widehat{F}$.
Introducing a patchwise interpolant $\widehat{\pi}_F:L^2(\hatK_1 \cup \hatK_2)\rightarrow Q^p(\hatK_1 \cup \hatK_2)$, where $Q^p$ is the space of tensor product polynomials of degree $\leq p$, we note that we can subtract $\widehat{\pi}_{F}\widehat{\phi}$ inside the stabilization terms as it will give no contribution due to the jump over faces.
We proceed as follows
\begin{align}
h^{2k-1} \bigl\| \llb \widehat{D}^k_n \widehat{\pi}_{h,i}\widehat{\phi} \rrb \bigr\|_{\widehat{F}}^2
&=
h^{2k-1} \bigl\| \llb \widehat{D}^k_n ( \widehat{\pi}_{h,i}\widehat{\phi} - \widehat{\pi}_{F}\widehat{\phi} ) \rrb \bigr\|_{\widehat{F}}^2
\\&\lesssim \label{eq:l2a}
h^{2k-2} \bigl\|  \widehat{\pi}_{h,i}\widehat{\phi} - \widehat{\pi}_{F}\widehat{\phi} \bigr\|_{H^k(\widehat{K}_1 \cup \widehat{K}_2)}^2
\\&\lesssim \label{eq:l2b}
\bigl\|  \widehat{\pi}_{h,i}\widehat{\phi} - \widehat{\pi}_{F}\widehat{\phi} \bigr\|_{H^1(\widehat{K}_1 \cup \widehat{K}_2)}^2
\\&\leq \label{eq:l2c}
\left(
\bigl\|  \widehat{\pi}_{h,i}\widehat{\phi} - \widehat{\phi} \bigr\|_{H^1(\widehat{K}_1 \cup \widehat{K}_2)}^2
+
\bigl\|  \widehat{\phi} - \widehat{\pi}_{F}\widehat{\phi}  \bigr\|_{H^1(\widehat{K}_1 \cup \widehat{K}_2)}^2
\right)
\\&\lesssim \label{eq:l2d}
h^2 \| \widehat{\phi} \|_{H^2(\widehat{K}_1 \cup \widehat{K}_2)}^2
\end{align}
where in \eqref{eq:l2a} we used an inverse trace inequality, in \eqref{eq:l2b} we used an inverse inequality, in \eqref{eq:l2b} we added and subtracted $\widehat{\phi}$ and used the triangle inequality, and finally in \eqref{eq:l2d} we used interpolation estimates.
We thus have the estimate
\begin{align}
II^2 =
\| \pi_h\phi \|_{j_h}^2
\lesssim h^2 \sum_{i\in\mcI_\Omega} \sum_{\widehat{F}\in\mcF(\widehat{\mcK}_{h,i})}\| \widehat{\phi} \|_{H^2(\widehat{K}_1 \cup \widehat{K}_2)}^2
\lesssim h \sum_{i\in\mcI_\Omega} \| \phi \|_{H^2(\mcK_{h,i})}
\lesssim h  \| e \|_{\Omega}
\end{align}
which concludes the proof.
\end{proof}

\subsection{Condition Number Estimate}

To prove an upper bound on the stiffness matrix condition number we follow the approach in \cite{BH12,Ern06}.
Let $\{\varphi_i\}_{i=1}^N$ be the standard piecewise tensor product polynomial Lagrange basis functions associated with the nodes in $\mcK_h$ and let $\mcA$ and $\mcM$ be the stiffness and mass matrices with elements $\mcA_{ij}=A_h(\varphi_j,\varphi_i)$ and $\mcM_{ij}=(\varphi_j,\varphi_i)_\Omega$, respectively. The condition number for the stiffness matrix is defined by
\begin{align}\label{eq:cond-def}
\kappa(\mcA) = |\mcA|_{\IR^N}|\mcA^{-1}|_{\IR^N}
\end{align}
where $|\cdot |_{\IR^N}$ on matrices denotes the operator norm 
\begin{align}
|\mcA|_{\IR^N} = \sup_{\mathbf{V}\in \IR^N\backslash 0} \frac{| \mcA\mathbf{V}|_{\IR^N}}{| \mathbf{V} |_{\IR^N}}
\end{align}
and $|\cdot |_{\IR^N}$ on vectors denotes the Eucledian norm.

\begin{thm}[Upper Bound on Condition Number] \label{thm:condition-number}
The condition number of the stiffness matrix $\mcA$ satisfies the estimate
\begin{equation}
\kappa(\mcA) \lesssim h^{-2}
\end{equation}
for all $h\in(0,h_0]$ with $h_0$ sufficiently small.
\end{thm}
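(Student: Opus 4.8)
The plan is to follow the route used for condition number bounds of unfitted methods in \cite{BH12,Ern06}, estimating the largest and smallest eigenvalues of the symmetric matrix $\mcA$ separately. Since a global constant function lies in the kernel of $A_h$, the estimate is understood on the mean-value-zero subspace $V_h/\IR$ (equivalently, for the problem with Dirichlet data of the corresponding remark, it is read on all of $V_h$, where $\mcA$ is nonsingular). The central tool is the scaling equivalence between the Euclidean norm of a coefficient vector and the $L^2$ norm of the corresponding finite element function over the \emph{full active background meshes} $\widehat{\mcK}_{h,i}$, i.e.\ including the fictitious parts of the cut elements; this is where the cut structure is felt and where the stabilization $j_h$ will be indispensable.

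First I would record the scaling estimate. Writing $v=\sum_i V_i\basis_i$ and noting that, since $V_h=\bigoplus_i V_{h,i}$, each basis function belongs to exactly one patch so that $\bfV$ splits into patch blocks with $|\bfV|^2_{\IR^N}=\sum_i|\bfV_i|^2_{\IR^N}$, the fact that each $\widehat{\mcK}_{h,i}$ is part of a uniform tensor product mesh and that $F_i$ has Jacobian bounded above and below yields
\begin{equation}
c\, h^{2}\, |\bfV|^2_{\IR^N} \;\leq\; \sum_{i\in\Ii} \|\hatv_i\|^2_{L^2(\widehat{\mcK}_{h,i})} \;\leq\; C\, h^{2}\, |\bfV|^2_{\IR^N},
\end{equation}
where $\hatv_i$ is the reference representation of $v|_{\Omega_i}$ on the whole active mesh $\widehat{\mcK}_{h,i}$. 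For the upper bound on $\lambda_{\max}(\mcA)$ I would then estimate, for any $v\in V_h$,
\[
\bfV^T\mcA\bfV = A_h(v,v) \lesssim \tn v\tn_h^2 \lesssim h^{-2}\sum_{i\in\Ii}\|\hatv_i\|^2_{L^2(\widehat{\mcK}_{h,i})} \lesssim |\bfV|^2_{\IR^N},
\]
using continuity \eqref{eq:continuity}, then elementwise inverse and inverse-trace inequalities on the active mesh (each stabilization contribution $h^{2k-1}\|\llb \widehat{D}^k_n\hatv\rrb\|^2_{\widehat{F}}$ scaling like $h^{-2}\|\hatv\|^2_{\widehat{K}}$ uniformly in $k$ by an inverse trace followed by an inverse inequality, the interface terms of $\tn\cdot\tn_h$ handled the same way via the cut trace inequality \eqref{eq:trace-cut2}), and finally the scaling estimate. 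Hence $\lambda_{\max}(\mcA)\lesssim 1$.

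For the lower bound on $\lambda_{\min}(\mcA)$ over $V_h/\IR$, coercivity \eqref{eq:coercivity} gives $\tn v\tn_h^2\lesssim A_h(v,v)$, so it suffices to prove $\sum_i\|\hatv_i\|^2_{L^2(\widehat{\mcK}_{h,i})}\lesssim\tn v\tn_h^2$. Splitting each active mesh into elements contained in $\hatOmega_i$, where $\|\hatv\|^2_{\widehat{K}}\leq\|\hatv\|^2_{\hatOmega_i}\lesssim\|v\|^2_{\Omega_i}$, and cut elements $\widehat{K}\in\widehat{\mcK}_{h,i}(\widehat{\Gamma}_i)$, where Lemma~\ref{eq:L2-bdry-elements} bounds $\|\hatv\|^2_{\widehat{K}}$ by $\|\hatv\|^2_{\mcN_l(\widehat{K})\cap\hatOmega_i}+\|\hatv\|^2_{j_{h,\mcF(\mcN_l(\widehat{K}))}}$, and summing with the finite overlap of the neighborhoods, gives $\sum_i\|\hatv_i\|^2_{L^2(\widehat{\mcK}_{h,i})}\lesssim\|v\|^2_\Omega+\|v\|^2_{j_h}$. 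Since $\|v\|^2_{j_h}\leq\tn v\tn_h^2$ and, for mean-value-zero $v$, a standard broken discrete Poincaré inequality yields $\|v\|^2_\Omega\lesssim\sum_i\|\nabla v\|^2_{\Omega_i}+\sum_{(i,j)\in\Iij}h^{-1}\|\llb v\rrb\|^2_{\Gamma_{ij}}\leq\tn v\tn_h^2$, we obtain $h^2|\bfV|^2_{\IR^N}\lesssim\tn v\tn_h^2\lesssim A_h(v,v)=\bfV^T\mcA\bfV$, hence $\lambda_{\min}(\mcA)\gtrsim h^2$. Combining the two bounds, $\kappa(\mcA)=\lambda_{\max}(\mcA)/\lambda_{\min}(\mcA)\lesssim h^{-2}$.

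The main obstacle is the lower bound, and within it the step controlling the $L^2$ norm of $v$ over the fictitious parts of the cut elements by the energy norm: on those regions $v$ is not constrained by $\|v\|_{L^2(\Omega)}$ at all, and it is precisely the stabilization form $j_h$ — through Lemma~\ref{lem:inverse-ghost} and its iterated version Lemma~\ref{eq:L2-bdry-elements}, which in turn rely on the Patch Geometry Assumption~\ref{ass:patches} — that restores the missing $L^2$ stability. One also has to keep track of the powers of $h$ through the chain of inverse and trace inequalities so that every stabilization order collapses to the single factor $h^{-2}$; the broken Poincaré inequality and the handling of the one-dimensional kernel of constants are routine but must be stated explicitly.
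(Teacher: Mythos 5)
Your proposal is correct and follows essentially the same route as the paper: the scaling equivalence between $|\bfV|_{\IR^N}$ and $\|v\|_{\mcK_h}$, the inverse inequality $\tn v \tn_h \lesssim h^{-1}\|v\|_{\mcK_h}$, and a discrete Poincar\'e inequality obtained by controlling the cut elements through the ghost-penalty lemmas (Lemma~\ref{lem:inverse-ghost}, Lemma~\ref{eq:L2-bdry-elements}) together with coercivity and continuity, all on the mean-value-zero subspace. The only cosmetic difference is that you bound $\lambda_{\max}$ and $\lambda_{\min}$ via Rayleigh quotients while the paper bounds $|\mcA|_{\IR^N}$ and $|\mcA^{-1}|_{\IR^N}$ by duality, which coincide since $\mcA$ is symmetric; your explicit use of a broken Poincar\'e inequality with interface jump terms is, if anything, slightly more careful than the paper's appeal to the standard Poincar\'e inequality.
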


\begin{proof}
If $v=\sum_{i=1}^N \mathbf{V}_i\varphi_i$ and $\{\varphi_i\}_{i=1}^N$ is the usual nodal basis on $\mcK_h$ the following well known estimate holds
\begin{align} \label{eq:discrete-equiv}
h | \mathbf{V} |_{\IR^N} \lesssim \| v \|_{\mcK_{h}} \lesssim h | \mathbf{V} |_{\IR^N}
\end{align}

We will make use of the inverse inequality
\begin{align}\label{eq:energy-L2-inverse}
\tn v \tn_h \lesssim h^{-1} \| v \|_{\mcK_h} \qquad \forall v \in V_h/\IR
\end{align}
and the discrete Poincaré inequality
\begin{align}\label{eq:poincare}
\| v \|_{\mcK_{h}} \lesssim \tn v \tn_h \qquad \forall v \in V_h/\IR
\end{align}
The inverse inequality \eqref{eq:energy-L2-inverse} is proven by first applying the triangle inequality to all jump and average terms and then using Lemma~\ref{lem:inverse} on the consistency terms, the inverse inequality
\begin{align}
h^{-1} \| v \|_{\Gamma_i \cap K}^2 \lesssim \| v \|_K^2
\end{align}
on the jump penalty term and the inverse inequality
\begin{align}
h^{2k-1}\| \widehat{D}_n^k \widehat{v} \|_{\partial\widehat{K}}^2
\lesssim
h^{2k-2} \| \widehat{v} \|_{H^k(\widehat{K})}^2
\lesssim
h^{-2} \|\widehat{v}\|_{\widehat{K}}^2
\end{align}
on the stability terms.
The discrete Poincaré inequality \eqref{eq:poincare} is proven by first separating the cut elements and applying Lemma~\ref{eq:L2-bdry-elements} which gives
\begin{align}
\| v \|_{\mcK_{h}}^2
\lesssim
\| v \|_{\Omega}^2 + \sum_{i\in\mcI_\Omega} \| v \|_{\mcK_{h,i}(\Gamma_i)}^2
\lesssim
\| v \|_{\Omega}^2 + j_h(v,v)
\lesssim
\| \nabla v \|_{\Omega}^2 + j_h(v,v)
\lesssim
\tn v \tn_h^2
\end{align}
where we in the second last inequality apply the standard Poincaré inequality on the first term.

We now turn to estimating $|\mcA|_{\IR^N}$ and $|\mcA^{-1}|_{\IR^N}$ separately. The product of these estimates will give a bound on the condition number $\kappa(\mcA)$ by its definition \eqref{eq:cond-def}.

\paragraph{Estimate of $|\mcA|_{\IR^N}$.}
Let $\mathbf{V} \in \widetilde{\IR^N}$ where $\widetilde{\IR^N} = \{ \mathbf{V}\in\IR^N \, : \, \mathbf{V}^T \mcM \mathbf{V} = 0 \}$. In other words $\widetilde{\IR^N}$ is the space of coefficient vectors corresponding for discrete functions in $V_h/\IR$.
By the definition of the method (in matrix form) and using continuity \eqref{eq:continuity} we have
\begin{align}
| \mcA\mathbf{V} |_{\IR^N}
&=
\sup_{\mathbf{W}\in \IR^N\backslash 0} \frac{(\mathbf{W},\mcA\mathbf{V})_{\IR^N}}{| \mathbf{W} |_{\IR^N}}
\\&=
\sup_{\mathbf{W}\in \widetilde{\IR^N}\backslash 0} \frac{(\mathbf{W},\mcA\mathbf{V})_{\IR^N}}{| \mathbf{W} |_{\IR^N}}
\\&
=
\sup_{w \in \{V_h/\IR\}\backslash 0} \frac{A_h(v,w)}{| \mathbf{W} |_{\IR^N}}
\\&
\lesssim
\sup_{w \in \{V_h/\IR\}\backslash 0} \frac{\tn v \tn_h \tn w \tn_h}{| \mathbf{W} |_{\IR^N}}
\\&
\lesssim |\mathbf{V} |_{\IR^N}
\end{align}
where we in the last inequality used the inverse estimate \eqref{eq:energy-L2-inverse} and \eqref{eq:discrete-equiv}.
It follows that
\begin{align} \label{eq:stiffness-bound}
|\mcA |_{\IR^N} \lesssim 1
\end{align}

\paragraph{Estimate of $|\mcA^{-1}|_{\IR^N}$.}
Let $\mathbf{V}\in\widetilde{\IR^N}$.
Using \eqref{eq:discrete-equiv}, the Poincaré inequality \eqref{eq:poincare}, coercivity \eqref{eq:coercivity}
and the Cauchy--Schwarz inequality we obtain
\begin{align}
h | \mathbf{V} |_{\IR^N}
\lesssim
\| v \|_{\mcK_{h}}
\lesssim
\tn v \tn_h
\lesssim
\frac{A_h(v,v)}{\tn v \tn_h}
=
\frac{(\mcA\mathbf{V},\mathbf{V})_{\IR^N}}{\tn v \tn_h}
\lesssim
\frac{|\mcA\mathbf{V}|_{\IR^N}|\mathbf{V}|_{\IR^N}}{\tn v \tn_h}
\lesssim
h^{-1} |\mcA\mathbf{V}|_{\IR^N}
\end{align}
Since $\mathbf{V}$ is arbitrary we by choosing $\mathbf{U}=\mcA\mathbf{V}\in\widetilde{\IR^N}$, i.e. $\widetilde{\IR^N} \ni \mathbf{V}=\mcA^{-1}\mathbf{U}$, get
\begin{align}
|\mcA^{-1}|_{\IR^N} \lesssim h^{-2}
\end{align}
which in combination with \eqref{eq:stiffness-bound} concludes the proof.
\end{proof}

\section{Numerical Results} \label{sec:numerics}

In this section we present our numerical experiments to verify convergence rates and the stability of the cut finite element method on patchwise parametrized surfaces. We also provide various numerical examples. 

\subsection{Model Problems} \label{sec:modprob}
For our convergence and stability results we choose the same Laplace--Beltrami model problems as in \cite{OMRAG09}; a problem on the unit sphere and a problem on a torus surface. The solutions and load functions to these problems satisfy $(u,1)_\domain = (f,1)_\domain = 0$.  
\paragraph{Surface and Analytical Solution.}
The surfaces and analytical solutions for our two model problems are illustrated in Figure~\ref{fig:analytical} and described below.
\begin{itemize}
\item \textbf{Sphere:}
The surface $\domain$ is the unit sphere centered in origo and we use a manufactured problem with analytical solution  $u = 3x^2 y-y^3$.

\item \textbf{Torus:}
The surface $\Omega$ is a torus with inner radius $r = 0.6$ and outer radius $R = 1$. This surface can be expressed in Cartesian coordinates as the points
\begin{equation}
\left\{
  x = (R + r \cos \theta) \cos{\phi} \, , \
  y = (R + r \cos \theta) \sin{\phi} \, , \
  z = r \sin \theta
\right\}
\label{eq:torusPar}
\end{equation}
for  $0 \leq \theta \leq 2 \pi$ and $0 \leq \phi < 2 \pi$ where $\{\theta,\phi\}$ are toroidal coordinates of the surface.
We use a manufactured problem with analytical solution $ u = \sin( 3 \phi) \cos(3 \theta + \phi)$. 
\end{itemize}

\begin{figure}
    \centering
    \begin{subfigure}[b]{0.47\textwidth}
        \centering
        \includegraphics[width=0.75\textwidth]{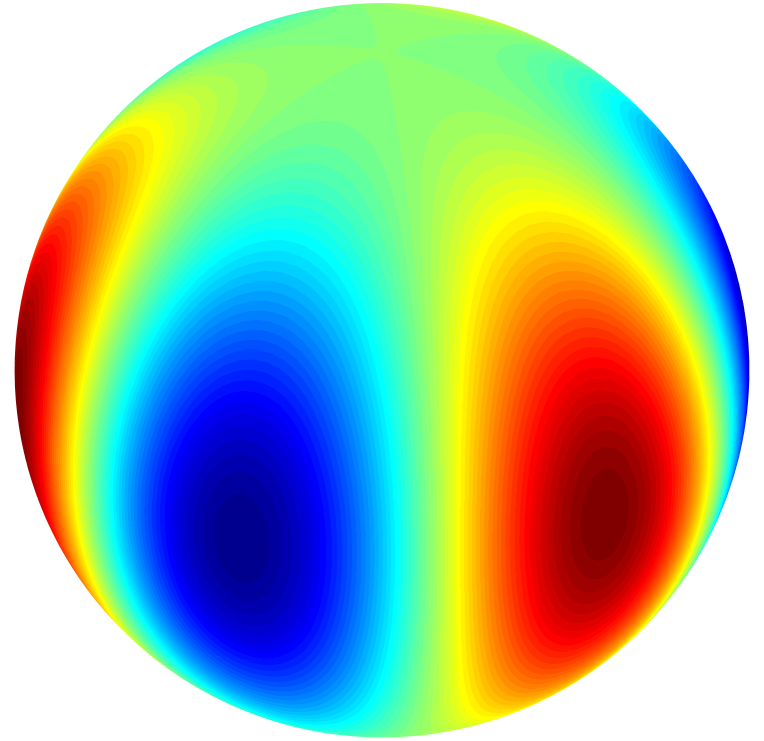}
        \caption{Sphere}
    \end{subfigure}
    \begin{subfigure}[b]{0.47\textwidth}
        \centering
        \includegraphics[width=0.9\textwidth]{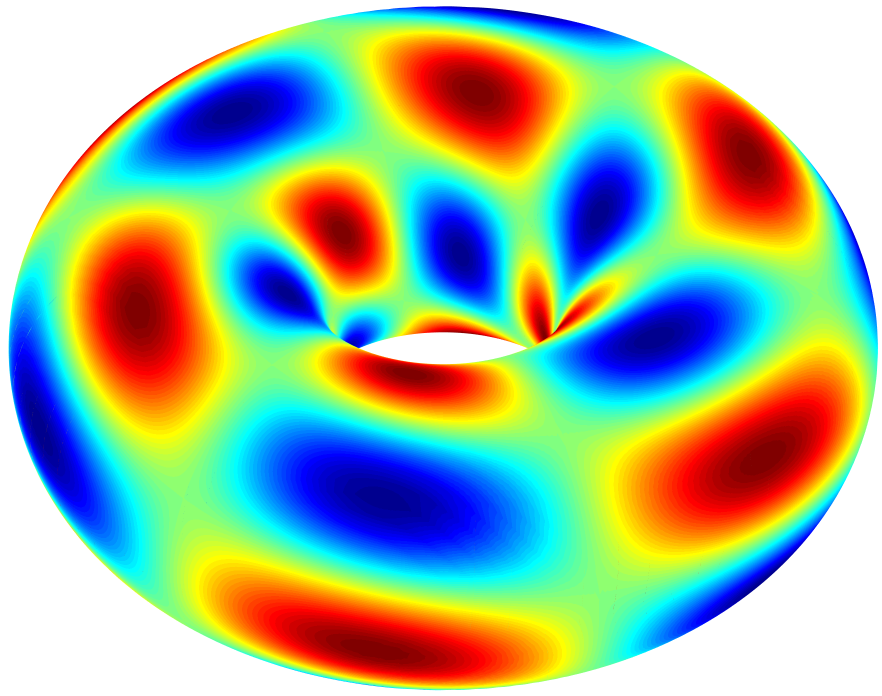}
        \caption{Torus}
    \end{subfigure}
    \caption{Analytical solutions to the two model problems.}
    \label{fig:analytical}
\end{figure}

\paragraph{Patchwise Surface Description.}
In the presented method the surface $\Omega$ is described by a set of mappings $\{ F_i \}_{i\in\mcI_\Omega}$ and trimmed patches in reference coordinates $\{ \hatOmega_i \}_{i\in\mcI_\Omega}$ such that $\Omega = \bigcup_{i\in\mcI_\Omega} F_i(\hatOmega_i)$ and $\bigcap_{i\in\mcI_\Omega} F_i(\hatOmega_i) = \emptyset$.
To construct such a description for the two model problems we first create a closed surface approximation of $\Omega$ consisting of a number of polygons $\{T_i\}_{i\in\mcI_\Omega}$. For each polygon $T_i$
we by a simple affine mapping can create an inverse mapping down to a reference patch $\hatOmega_i$ in $[0,1]^2$. To map onto the surface $\Omega$ we from $T_i$ use a
closest point mapping and by combining the inverse mapping and the closest point mapping we define $F_i:\hatOmega_i\rightarrow \Omega_i$. The actual patchwise descriptions used for the model problem are illustrated in Figure~\ref{fig:model-patches}.

\begin{figure}
    \centering
    \begin{subfigure}[b]{0.47\textwidth}
        \centering
        \includegraphics[width=0.75\textwidth]{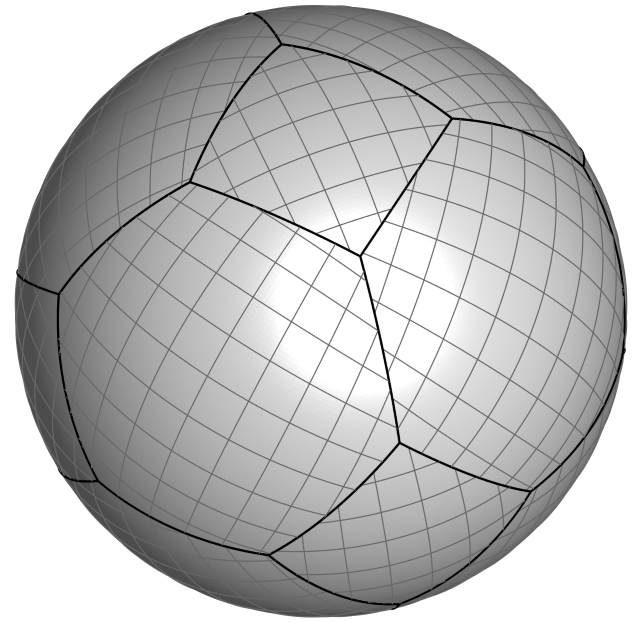}
       \caption{Sphere}
    \end{subfigure}
    \begin{subfigure}[b]{0.47\textwidth}
        \centering
        \includegraphics[width=0.9\textwidth]{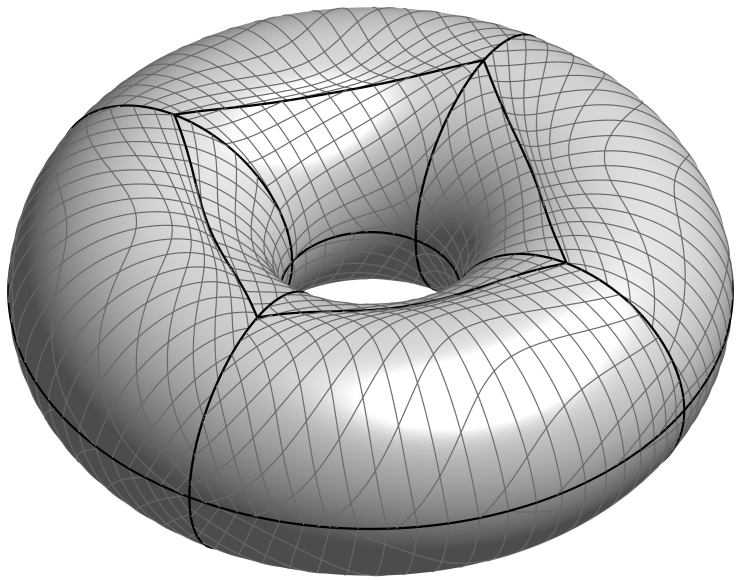}
        \caption{Torus}
    \end{subfigure}
      \caption{Patchwise described surfaces for the two model problems. The patches are displayed with coarse meshes to illustrate the effect of the constructed mappings.}
    \label{fig:model-patches}
\end{figure}

\subsection{Implementation Aspects}

We use tensor product Lagrange finite elements of order $p$ on quadrilaterals in our implementation. In the results below we for the Nitsche interface terms used the parameter $\beta=100$ and for the CutFEM stability terms used parameters $\gamma_k = 10^{-2}$, $k=1,\dots,p$. The latter choice is numerically investigated in Section~\ref{sec:stability} below. To impose the average constraint $(u_h,1)_\Omega=0$ we use a Lagrange multiplier approach, see for example \cite{LarBen}.

\subsection{Convergence}

\begin{figure}
    \centering
    \begin{subfigure}[b]{0.47\textwidth}
        \centering
        \includegraphics[width=0.75\textwidth]{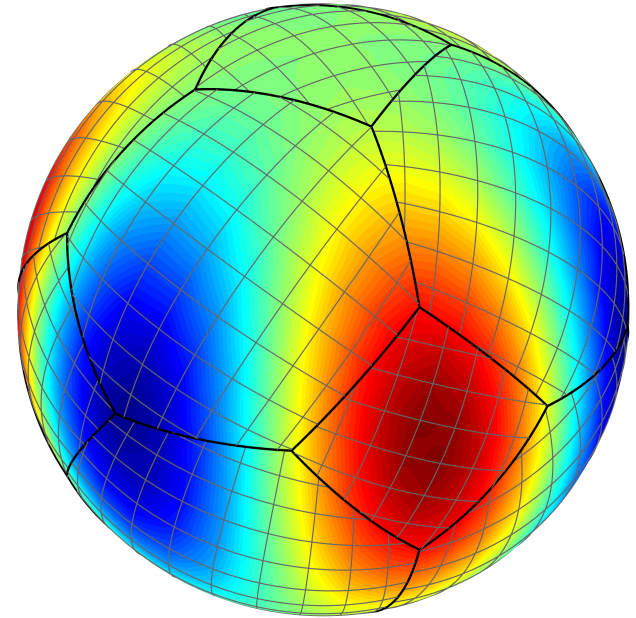}
       \caption{Sphere}
    \end{subfigure}
    \begin{subfigure}[b]{0.47\textwidth}
        \centering
        \includegraphics[width=0.9\textwidth]{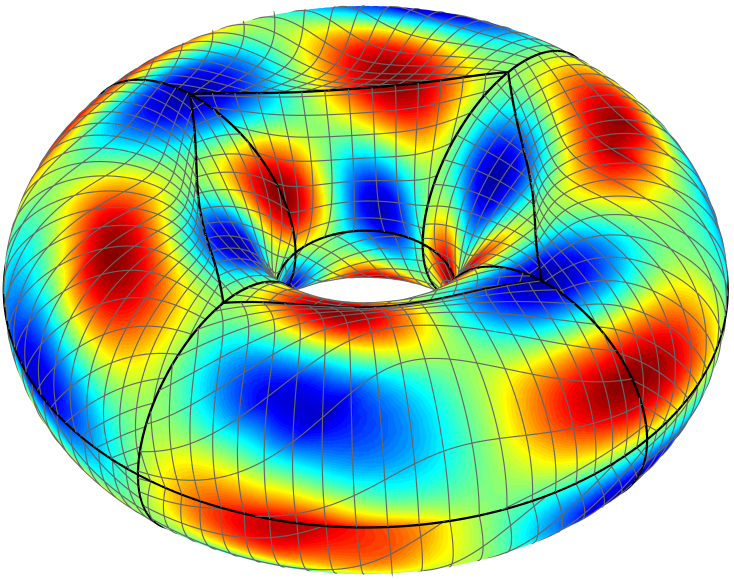}
        \caption{Torus}
    \end{subfigure}
      \caption{Finite element solutions $u_h$ to the model problems.}
    \label{fig:u_h}
\end{figure}

To confirm our theoretical results in Theorem~\ref{thm:energy} and Theorem~\ref{thm:L2} we present convergence results for the energy norm error and $L^2$ norm error in Figure~\ref{fig:conv-energy} and Figure~\ref{fig:conv-L2}, respectively. Example numerical solutions for the two model problems are displayed in Figure~\ref{fig:u_h}. In these studies the geometry representation, i.e. the reference patches $\hatOmega_i$ and mappings $F_i$, is kept fixed while the background grid is refined.

\begin{figure}
\centering
\begin{subfigure}[b]{0.48\textwidth}\centering
\includegraphics[width=0.9\textwidth]{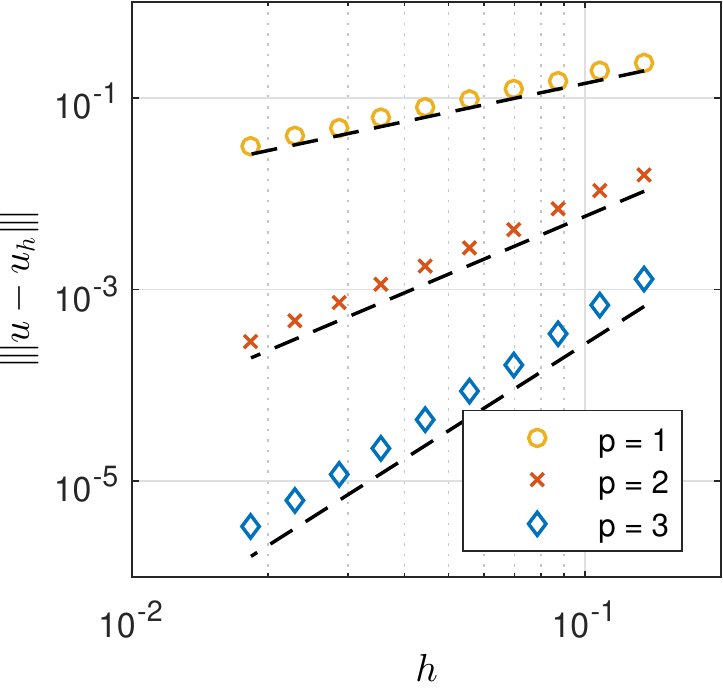}
\caption{Sphere}
\end{subfigure}  
\begin{subfigure}[b]{0.48\textwidth}\centering
\includegraphics[width=0.9\textwidth]{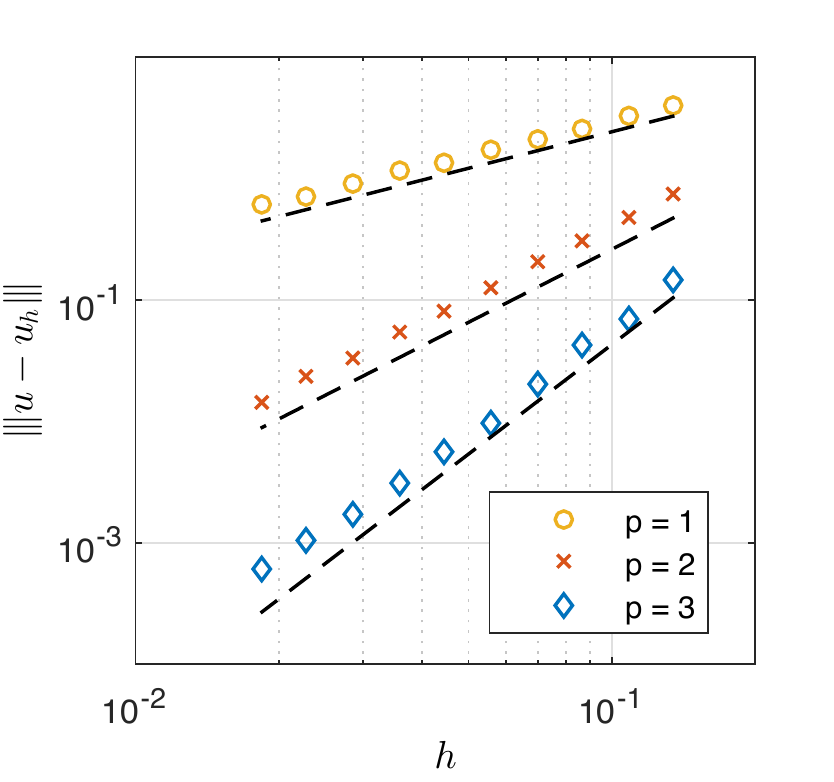}
\caption{Torus}
\end{subfigure}
\caption{Convergence results in the energy norm for the two model problems. The dashed reference lines are $h^{p}$.}
\label{fig:conv-energy}
\end{figure}

\begin{figure}
\centering
\begin{subfigure}[t]{0.48\textwidth}\centering
\includegraphics[width=0.9\textwidth]{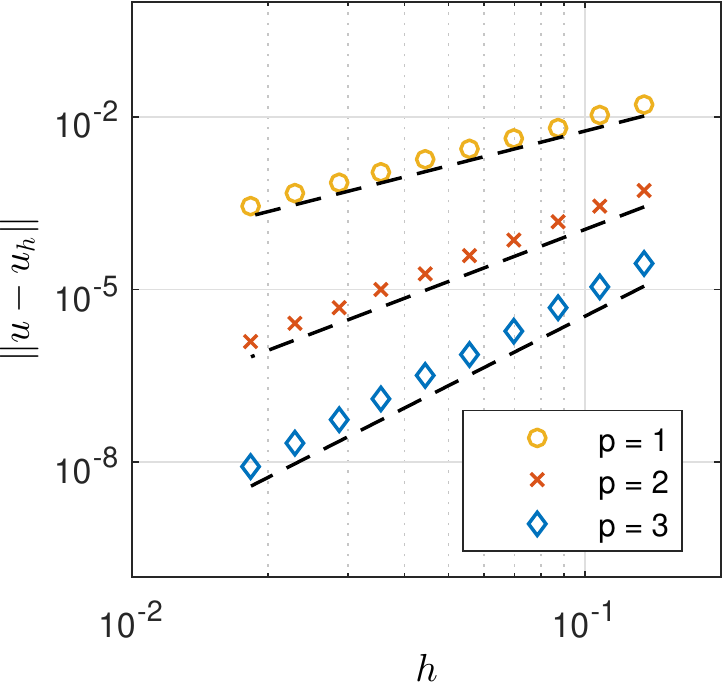}
\end{subfigure}
\begin{subfigure}[t]{0.48\textwidth}\centering
\includegraphics[width=0.9\textwidth]{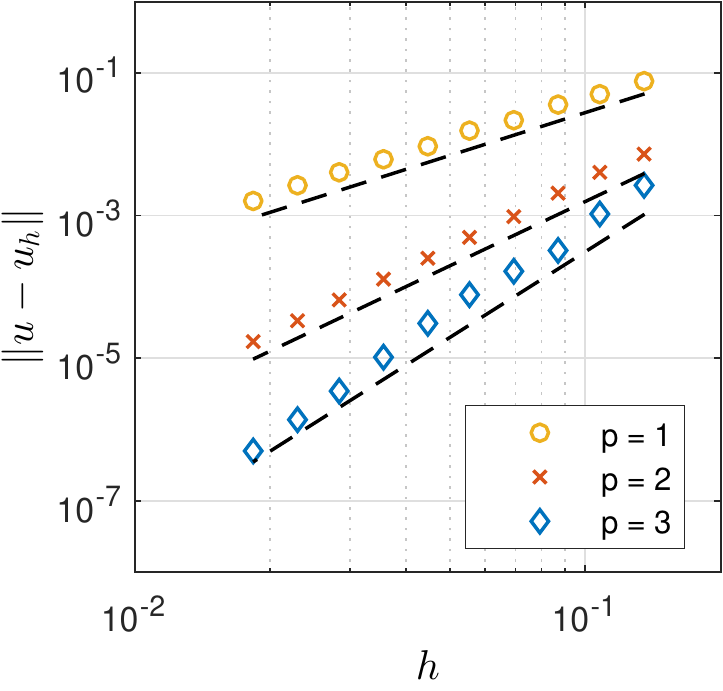}
\end{subfigure}
\caption{Convergence results in $L^2$ norm for the two model problems. The dashed reference lines are $h^{p+1}$.}
\label{fig:conv-L2}
\end{figure}

\subsection{Stability} \label{sec:stability}
\paragraph{Patch Position in the Background Mesh.}
Depending on how a reference patch $\hatOmega_i$ is positioned in $[0,1]^2$ the intersection with the background mesh may produce situations with arbitrary small cut elements. To demonstrate the stability of the method with regard to different cut situations we produce statistical data by randomly rotating each reference patch $\hatOmega_i$ in the background mesh to give random cut situations and repeating the simulation N times.  The standard deviation of the energy and $L^2$ errors in these simulations are presented in Figure~\ref{fig:rotStudy}.

\begin{figure}
\centering
\begin{subfigure}[t]{0.48\textwidth}\centering
\includegraphics[width=0.9\textwidth]{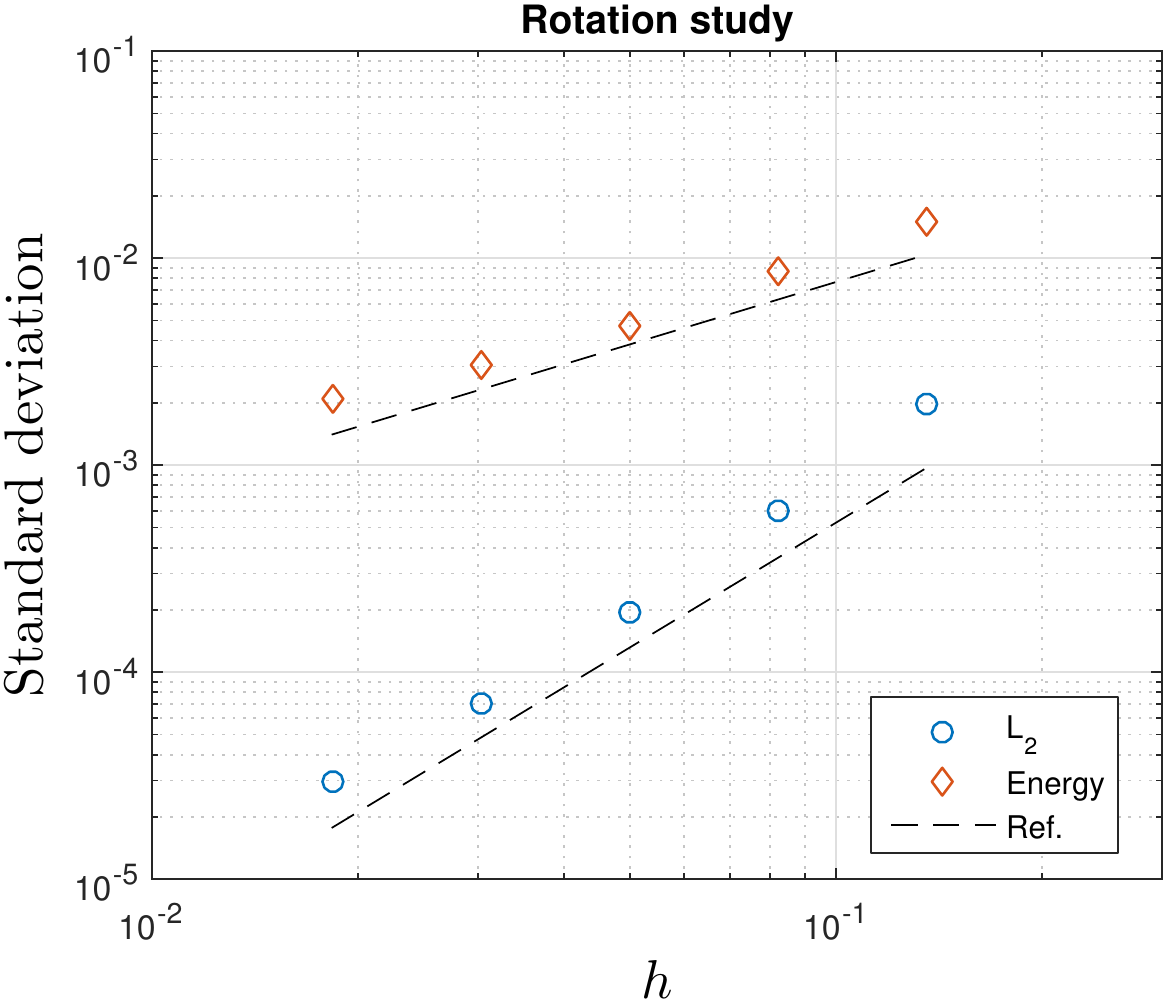}
\end{subfigure}
\begin{subfigure}[t]{0.48\textwidth}\centering
\includegraphics[width=0.9\textwidth]{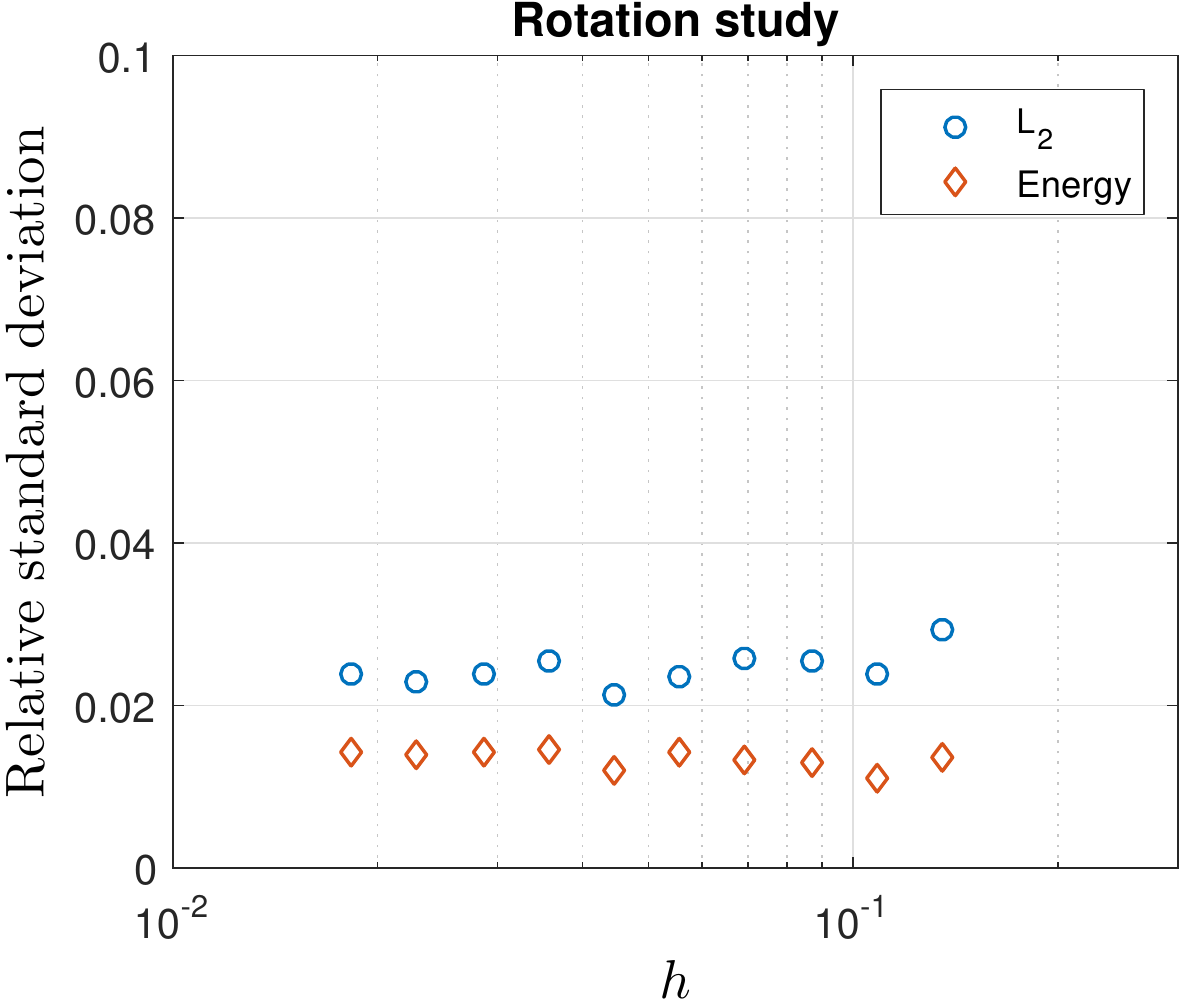}
\label{fig:rotRel}
    \end{subfigure}
\caption{
Standard deviation (left) and relative standard deviation (right) of the error in the energy norm and $L^2$ norm against the mesh size $h$ for the sphere model problem with random placement of the reference patches in the background mesh. Bilinear finite elements are used ($p=1$) and the
reference lines in the left figure are $\mathcal{O}(h)$ and $\mathcal{O}(h^{2})$.}
      \label{fig:rotStudy}
\end{figure}

\paragraph{Condition Number.}
The discrete problem can become arbitrarily ill conditioned if the stabilization term $j_{h,i}$ is not included in the form $A_h$. To capture this instability we estimate the condition number of the stiffness matrix for numerous patch positions in the reference domain, producing different cut situations. This is illustrated in Figure~\ref{fig:kappa_h} where we estimate the condition number for both the stabilized and unstabilized system in random cut situations. Note that the condition number for the stabilized stiffness matrix scales as $\mathcal{O}(h^{-2})$ in agreement with the bound proven in Theorem~\ref{thm:condition-number}.

\begin{figure}
\centering
\begin{subfigure}[t]{0.48\textwidth}\centering
\includegraphics[width=0.95\textwidth]{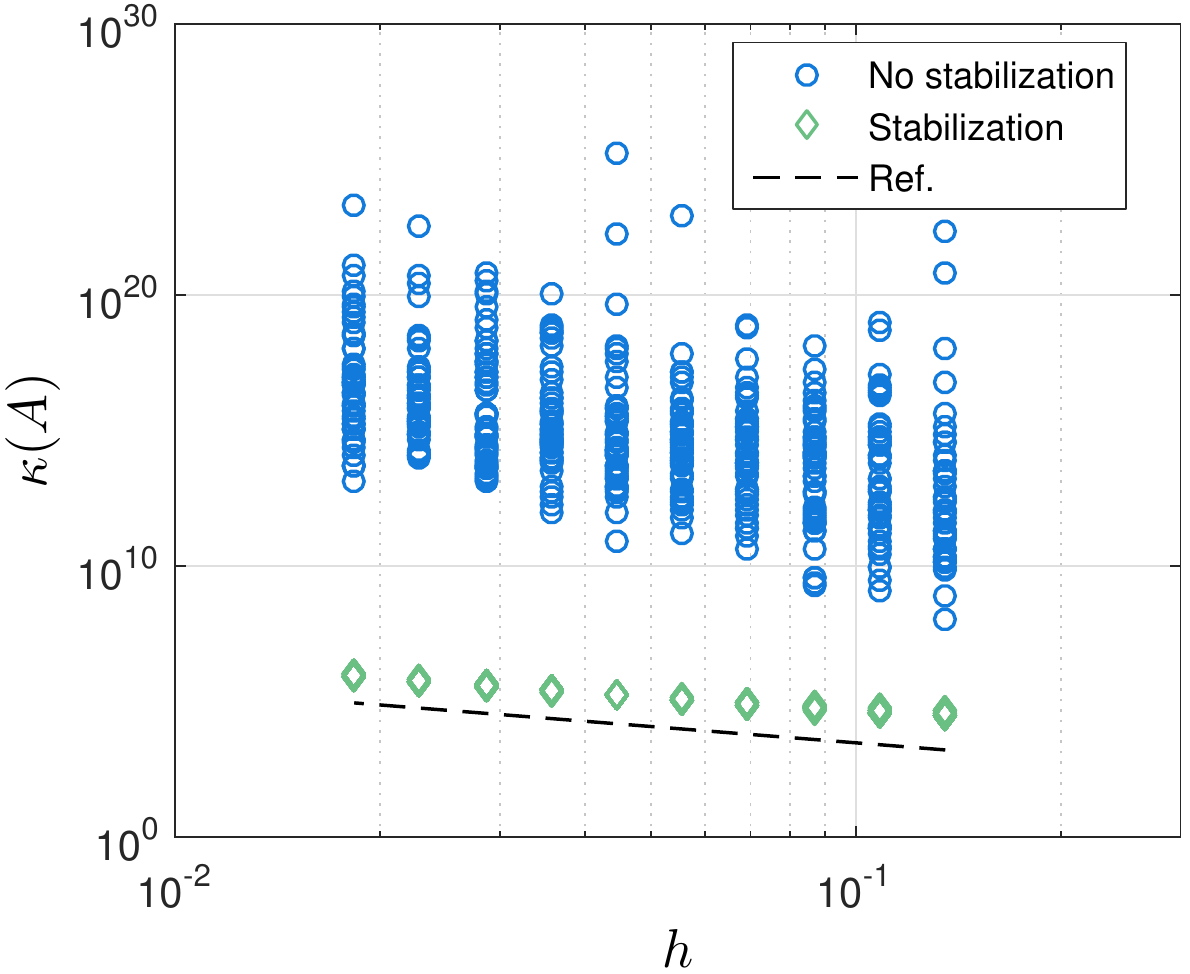}
\caption{}
\label{fig:kappa_h}
\end{subfigure}
\begin{subfigure}[t]{0.48\textwidth}\centering
\includegraphics[width=0.95\textwidth]{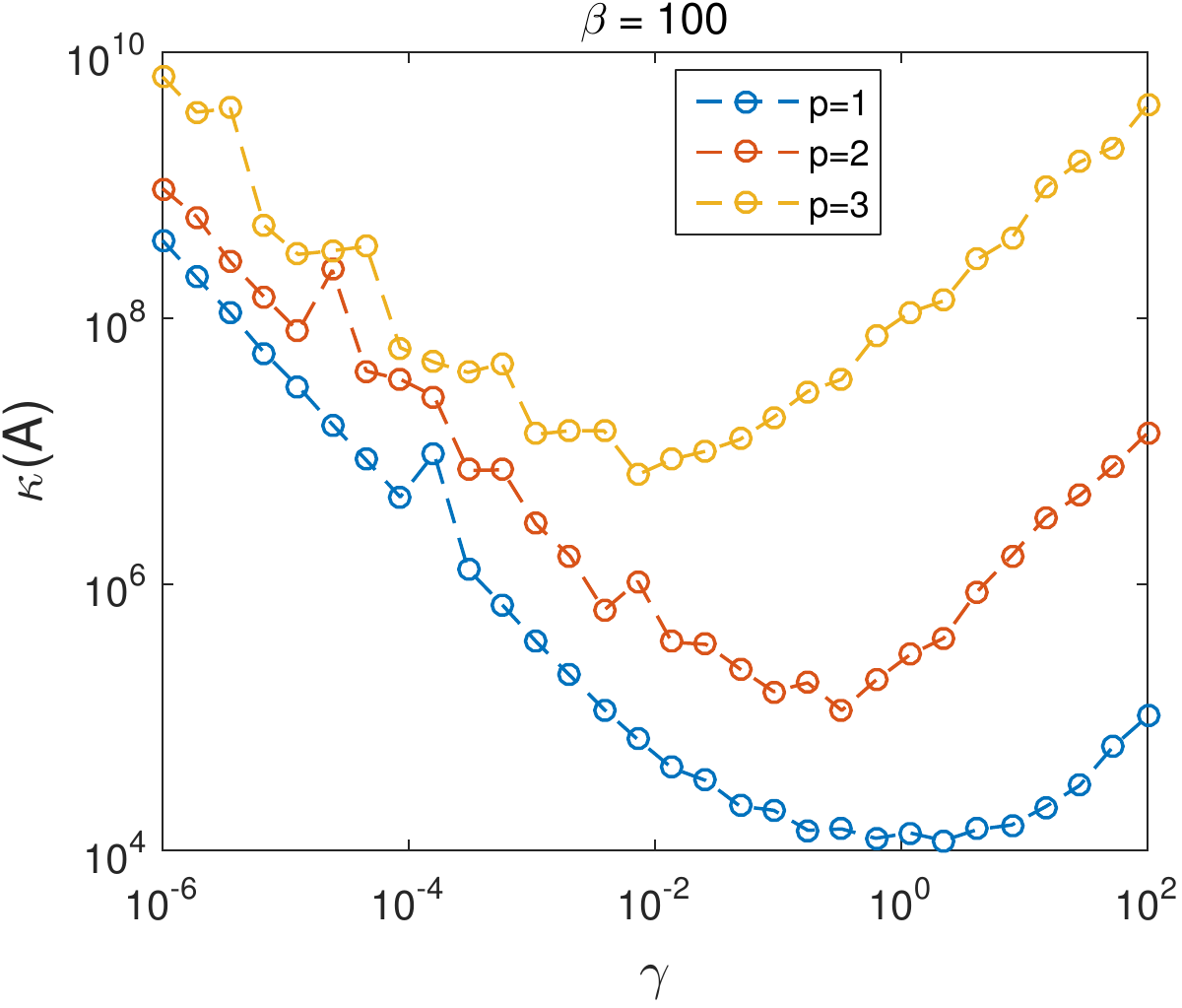}
\caption{}
\label{fig:kappaStab}
\end{subfigure}
\caption{Numerical studies of the stiffness matrix condition number in the unit sphere model problem.
(a) The condition number $\kappa(\mathbf{A})$ against the mesh size $h$ for 50 random positions of the reference patches in the background grids ($p=1$). The reference line is $\mathcal{O}(h^{-2})$.
(b) The condition number $\kappa(\mathbf{A})$ as a function of the stability parameter $\stab$.
}
\label{fig:condest}
\end{figure}

\paragraph{Choice of Stability Parameter $\boldsymbol\gamma$.}

For a fixed mesh size $h$ we investigate how the size of the stabilization parameters $\gamma_k$, $k=1,\dots,p$, affect numerical stability, i.e. the condition number $\kappa$, and the size of the error in the solution. Assuming all stabilization parameters take on the same value, i.e. $\gamma_k=\gamma$, we present a numerical study of this in Figure~\ref{fig:kappaStab} respectively in Figure~\ref{fig:stabErr}.
For small values of $\gamma$ we increasing condition numbers resulting in numerical instabilities, see Figure~\ref{fig:kappaStab}, and for large values of $\gamma$ we note that the stabilization term $j_{h,i}$ will start to impact the solution leading to larger errors, see Figure~\ref{fig:stabErr}. A good middle ground seems to be $\gamma=10^{-2}$.

\begin{figure}
\centering
\begin{subfigure}[t]{0.48\textwidth}\centering
\includegraphics[width=0.95\textwidth]{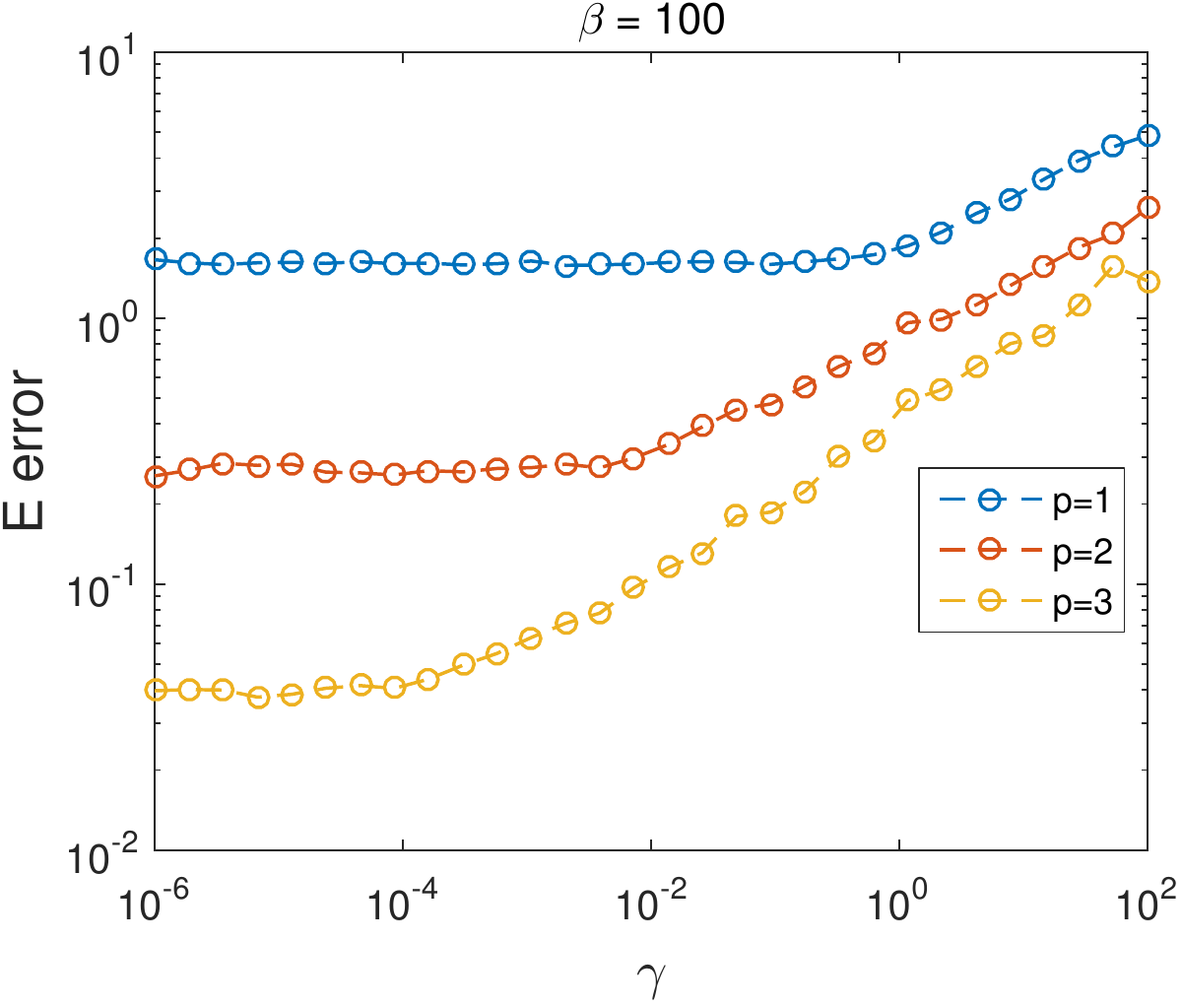}
\end{subfigure}
\begin{subfigure}[t]{0.48\textwidth}\centering
\includegraphics[width=0.95\textwidth]{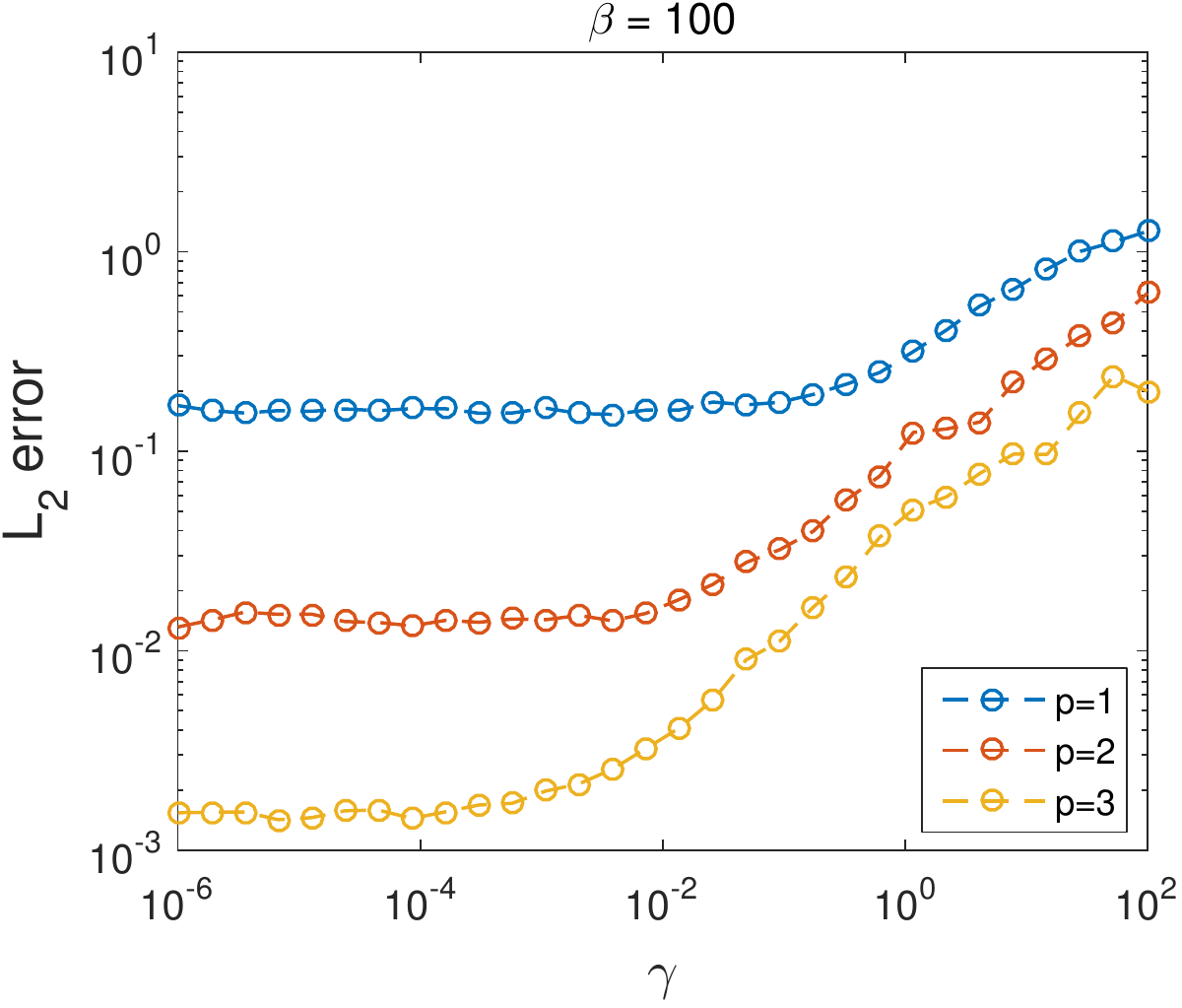}
\end{subfigure}
\caption{Error in the energy norm and in $L_2$ norm as a function of the stability parameter $\stab$. }
\label{fig:stabErr}
\end{figure}

\subsection{Numerical Examples}
\paragraph{Surface with Boundary.}
With the simple adaption of the method to boundary conditions described in Section~\ref{sec:method} we give results of a problem with boundary in Figure~\ref{fig:torusBoundary}, where we have both non-homogeneous Dirichlet conditions and Neumann conditions.

\paragraph{Klein Bottle.}
The Klein bottle is a closed non-orientable surface for which it exists no embedding in $\IR^3$. Let $\Omega$ in Cartesian coordinates be described by the parametrization
\begin{align}
    x(\theta,\phi) &= -\frac{2}{15} \cos \theta (3 \cos{\phi}-30 \sin{\theta}+90 \cos^4{\theta} \sin{\theta} 
\\ 
\nonumber &\quad -60 \cos^6{\theta} \sin{\theta}+5 \cos{\theta} \cos{\phi} \sin{\theta}) 
\\
y(\theta,\phi) &= -\frac{1}{15} \sin \theta (3 \cos{\phi}-3 \cos^2{\theta} \cos{\phi}-48 \cos^4{\theta} \cos{\phi}+ 48 \cos^6{\theta} \cos{\phi}
\\
\nonumber &\quad -60 \sin{\theta}+5 \cos{\theta} \cos{\phi} \sin{\theta}-5 \cos^3{\theta} \cos{\phi} \sin{\theta}-80 \cos^5{\theta} \cos{\phi} \sin{\theta}
\\
\nonumber  &\quad +80 \cos^7{\theta} \cos{\phi} \sin{\theta}) 
\\
    z(\theta,\phi) &= \frac{2}{15} (3+5 \cos{\theta} \sin{\theta}) \sin{\phi}
\end{align}
for $0 \leq \theta < \pi$ and $0 \leq \phi < 2 \pi$. We manufacture a problem with the analytical solution $u = 3\cos^2 \theta \sin \phi - \sin^3 \phi$ and the resulting
finite element solution is presented in Figure~\ref{fig:klein}.

\begin{figure}
\centering
\begin{subfigure}[b]{0.43\textwidth}\centering
\includegraphics[width=0.9\textwidth]{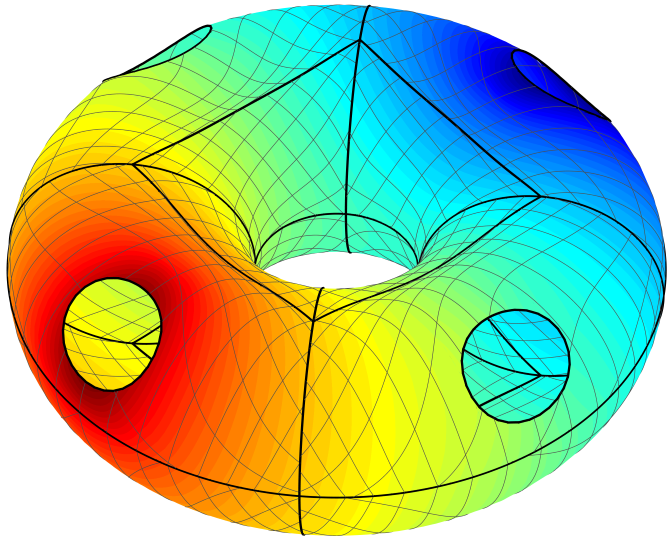}
\caption{Surface with boundary}  \label{fig:torusBoundary}
\end{subfigure}
\begin{subfigure}[b]{0.55\textwidth}\centering
\includegraphics[width=0.9\textwidth]{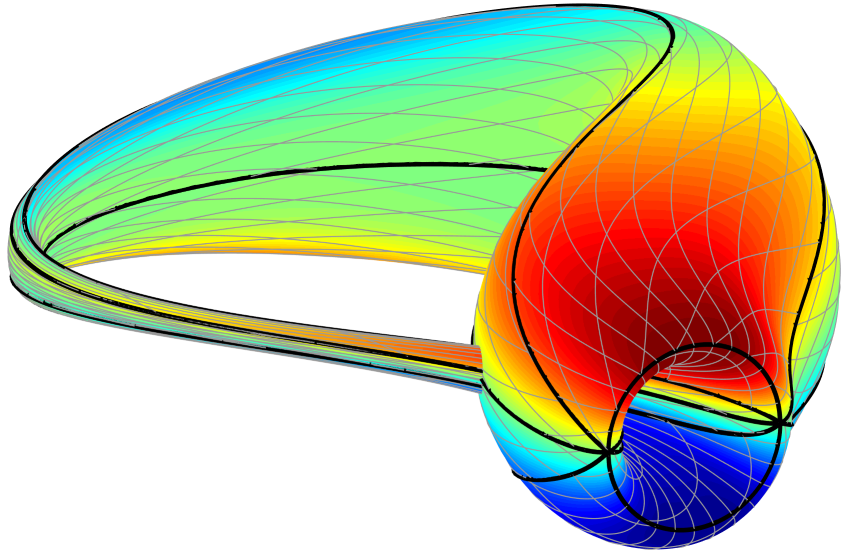}
\caption{Klein bottle} \label{fig:klein}
\end{subfigure}
\caption{(a) Solution to a problem with Dirichlet boundary conditions on the top right, $u=0$, and bottom left, $u=10$, holes and homogeneous Neumann boundary conditions on the top left and bottom right holes.
(b) Solution to a problem posed on a Klein bottle; a non-orientable surface for which there exist no embedding in $\IR^3$.}
\end{figure}

\paragraph{Surface with Sharp Interfaces.}
Let $\Omega$ be the closed surface to the half solid torus defined via \eqref{eq:torusPar} and $\theta \in [0,2 \pi],\,\phi \in [\frac{\pi}{2},\frac{3 \pi}{2}]$ and $r \leq 0.6$. The resulting geometry consists of half a torus and two circular discs. We manufacture a problem by choosing the same analytical solution as for the torus model problem on the torus part and on the discs we make the ansatz of a single cubic Hermite polynomial in the radial direction with zero solution and radial derivative in the disc center. The analytical solution on the discs are then derived from the interface conditions. In Figure~\ref{fig:sharp-interface} the solution and gradient magnitude of the finite element solution are displayed, and both flow nicely over the interfaces.

\begin{figure}
\centering
\begin{subfigure}[b]{0.47\textwidth}\centering
\includegraphics[width=0.9\textwidth]{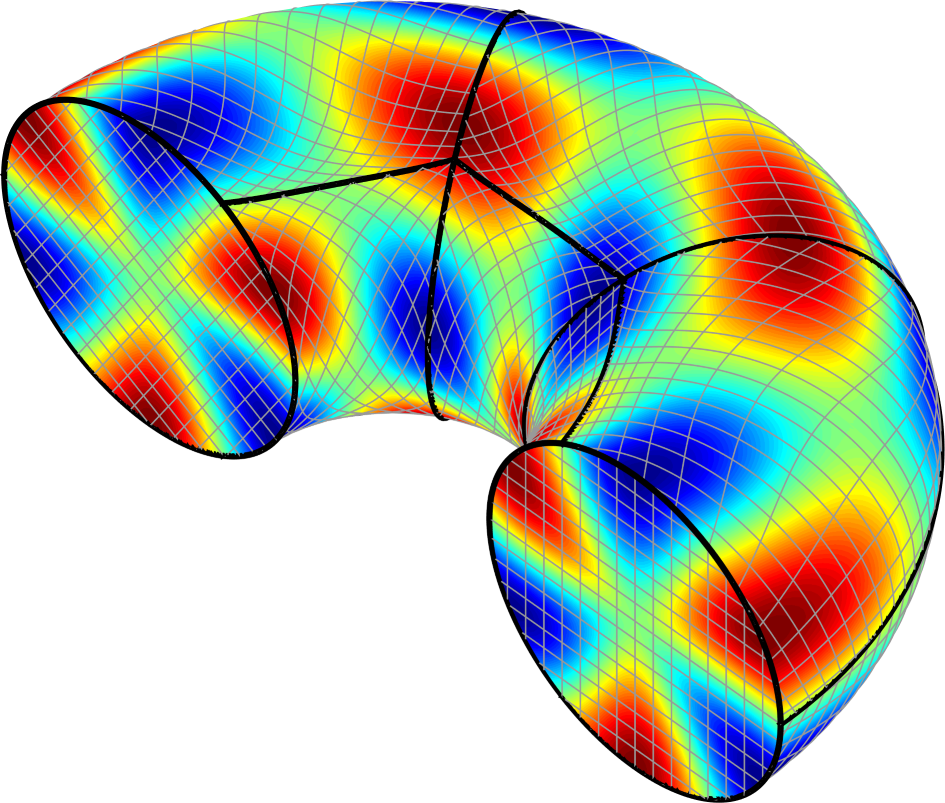}
\caption{Solution}  \label{edgeSol}
\end{subfigure}
\begin{subfigure}[b]{0.47\textwidth}\centering
\includegraphics[width=0.9\textwidth]{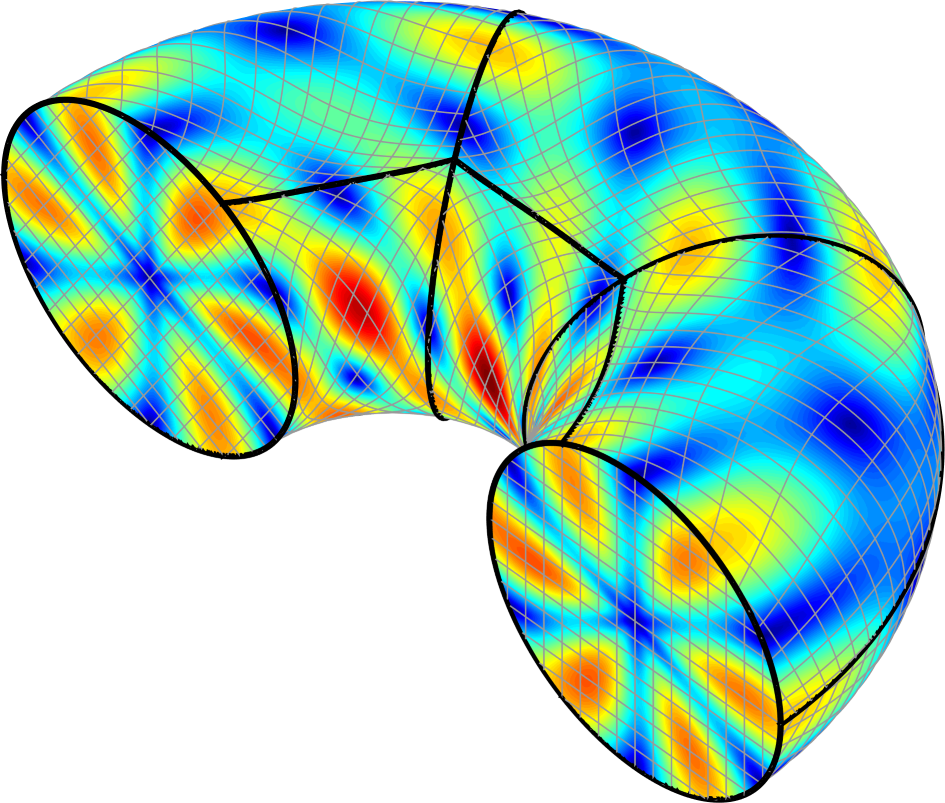}
\caption{Magnitude of gradient} \label{edgeGrad}
\end{subfigure}
\caption{Finite element solution to a problem with a sharp interface. Note that both the solution (a) and the gradient magnitude (b) flows nicely over the interface.}
\label{fig:sharp-interface}
\end{figure} 

\section{Summary and Future Work} \label{sec:conclusions}

We have presented and analysed a higher order cut finite element method for elliptic problems on multipatch surfaces. The method has the following fundamental features:
\begin{itemize}
\item Patches are described by mappings from a reference domain and trim curves.
\item On each patch a mesh is constructed using structured grids in the reference domain.
\item The discrete solution is coupled between the patchwise meshes by enforcing  interface conditions using Nitsche's method.
\item On each patch we handle elements cut by trim curves by adding certain stabilization terms.
\item The stability and error analysis is independent of how the trim curves cut the mesh.
\end{itemize}

\paragraph{Real Applications.}
While we in this work consider the Laplace--Beltrami operator as a model problem, there are many real problems posed on surfaces to which the same framework for dealing with multipatch surfaces effectively could be applied.
For example, there is a great interest in structural mechanics for modeling membranes, plates and shells, and modeling of thin films and lubrication also occur on surfaces.

\paragraph{Extended Analysis.}
In the analysis we assume that, at the interface, the trim curves on both patches map exactly onto the same interface curve. However, this is typically not the case when working with geometries extracted from CAD due to the discrete representation of the trim curves. Therefore a useful extension of the analysis would be to consider gaps in the geometry.
Another useful extension would be higher order PDE which are common for problems on surfaces and in this setting we can easily construct a tensor product basis with higher order continuity properties, i.e. where the restriction of the finite element space to each patch is a subspace to the proper Hilbert space.

\paragraph{Isogeometric Analysis.}
As the presented multipatch method is based on an exact description of the geometry by parametric mappings and features higher order elements it fits perfectly into the framework of isogeometric analysis \cite{IGA,IGABook}. The CutFEM approach also allows for convenient construction of structured meshes equipped with tensor product spline basis functions.

\appendix
\section{Differential Operators on Surfaces} \label{appendix:operators}
We provide details for the local forms of the divergence and Laplace--Beltrami 
operator as well as a derivation of Green's formula on a surface using only calculus in the reference coordinates and some basic linear algebra.
\paragraph{Divergence.} Starting from the definition 
\begin{equation}
-(\divv v, w)_{\omega} = (v,\nabla w)_{\omega}
\end{equation}
for $v,w \in C^\infty_0(\omega)$, for $\omega \subset \Omega_\ia$, 
we have the identities
\begin{align}
-(\widehat{\divv v}, \hatw)_{\hatomega} &= (\hatv,\widehat{\nabla w})_{\hatomega} 
\\
&= \int_{\hatomega} g(\hatv,\widehat{ \nabla w}) |\hatG|^{1/2} d\hatx 
\\
&= \int_{\hatomega} \hatv \cdot \hatnabla \hatw |\hatG|^{1/2} d\hatx 
\\
&= - \int_{\hatomega}  \hatnabla \cdot ( |\hatG|^{1/2} \hatv)  \hatw d\hatx
\\
&=- \int_{\hatomega}  |\hatG|^{-1/2}(\hatnabla \cdot |\hatG|^{1/2} \hatv)  
\hatw |\hatG|^{1/2} d\hatx
\\
&= ( |\hatG|^{-1/2}(\hatnabla \cdot |\hatG|^{1/2} \hatv) ,  \hatw)_{\hatomega}
\end{align}
where we integrated using standard Green's formula in local coordinates. 
Thus we conclude that 
\begin{equation}
\widehat{\divv v} = |\hatG|^{-1/2}(\hatnabla \cdot |\hatG|^{1/2} \hatv) 
\end{equation}

\paragraph{The Laplace--Beltrami Operator.} Using the definition 
of the Laplace--Beltrami operator (\ref{def:Laplace-Beltrami}) we 
conclude that in local coordinates
\begin{equation}\label{def:divergence-local}
\widehat{\Delta v} 
= \widehat{\divv ( \nabla v)}
= |\hatG|^{-1/2} \hatnabla \cdot (|\hatG|^{1/2} \widehat {\nabla v} )
= |\hatG|^{-1/2} \hatnabla \cdot (|\hatG|^{1/2} \hatG^{-1} \hatnabla \hatv )
\end{equation}
where we used (\ref{eq:gradient-local}).
\paragraph{Green's Formula.} We have the identities
\begin{align}
-(\Delta v, w)_\omega 
&=
-\int_{\hatomega} \widehat{\Delta v}\hatw |\hatG|^{1/2} d \hatx
\\
&= 
-\int_{\hatomega} |\hatG|^{-1/2} \hatnabla \cdot(|\hatG|^{1/2} \hatG^{-1} 
\hatnabla \hatv) \hatw |\hatG|^{1/2} d \hatx
\\
&= 
-\int_{\hatomega}  \hatnabla \cdot(|\hatG|^{1/2} \widehat{\nabla v}) \hatw  d \hatx
\\
&= 
\int_{\hatomega}  |\hatG|^{1/2} \widehat{\nabla v} \cdot \hatnabla\hatw  d \hatx
- \int_{\partial \hatomega}  |\hatG|^{1/2} \hatnu \cdot \widehat{\nabla v} \hatw  
d \hatgamma
\\ \label{eq:appendix-b}
&= 
\underbrace{\int_{\hatomega}   g(\widehat{\nabla v} ,\widehat{\nabla w})  |\hatG|^{1/2} d \hatx}_{\int_\omega \nabla v \cdot \nabla w \, dx}
- \underbrace{\int_{\partial \hatomega}  g(\hatG^{-1}\hatnu, \widehat{\nabla v}) \hatw  
|\hatG|^{1/2} d \hatgamma }_{\bigstar=\int_{\partial \omega}  n\cdot \nabla v  \, w \,   d \gamma}
\end{align}
where we will now verify the identity  $\bigstar =  \int_{\partial \omega}  n\cdot \nabla v  \, w \,   d \gamma$. We note that $\hatnu$ is the unit exterior normal to $\partial \omega$, with respect to the 
usual $\IR^2$ inner product in $T(\hatOmega_\ia)$, and that 
\begin{equation}
\hatn = \frac{\hatG^{-1} \hatnu}{\| \hatG^{-1} \hatnu \|_g} 
\end{equation}
see (\ref{eq:normal-coordinates}).
Thus the term $\bigstar$ on the right hand side in (\ref{eq:appendix-b}) may 
be written in the form
\begin{align}
\bigstar = \int_{\partial \hatomega}  g(\hatG^{-1}\hatnu, \widehat{\nabla v}) \hatw  
|\hatG|^{1/2} d \hatgamma
&=
\int_{\partial \hatomega}  g(\hatn, \widehat{\nabla v}) \hatw  
\| \hatG^{-1} \hatnu \|_g |\hatG|^{1/2} d \hatgamma
\end{align}
and we will next study the measure $\| \hatG^{-1} \hatnu \|_g |\hatG|^{1/2} d \hatgamma$ in more detail.

\paragraph{The Two-Dimensional Case.} Using the identity 
\begin{equation}
\hatG^{-1} = |\hatG|^{-1} S^T \hatG S 
\end{equation}
where 
\begin{equation}
S =\left( 
\begin{matrix}
0 & -1
\\
1 & 0
\end{matrix}
\right)
\end{equation}
we have $\hattau = S \hatnu$, where $\hattau$ is the unit with respect to the Euclidean inner product tangent vector to the curve $\partial \hatomega$,  and
\begin{align}
|\hatG| \| \hatG^{-1} \hatnu \|_g^2 &= |\hatG| \hatnu \cdot \hatG^{-1} \cdot \hatnu
\\ 
&= \hatnu \cdot (S^T \hatG S) \cdot \hatnu
\\
&= (S\hatnu) \cdot \hatG \cdot (S \hatnu)
\\
&= \hattau \cdot \hatG \cdot \hattau
\\
&=\| \hattau \|^2_g
\end{align}
and therefore we recover the standard curve measure. Thus we conclude 
that 
\begin{align}
\int_{\partial \hatomega}  g(\hatG^{-1}\hatnu, \widehat{\nabla v}) \hatw  
|\hatG|^{1/2} d \hatgamma
&=
\int_{\partial \hatomega}  g(\hatn, \widehat{\nabla v}) \hatw  
\| \hatG^{-1} \hatnu \|_g |\hatG|^{1/2} d \hatgamma
\\
&= \int_{\partial \hatomega}  g(\hatn, \widehat{\nabla v}) \hatw  
\|\hattau \|_g d \hatgamma
\\
&= \int_{\partial \omega}  n\cdot \nabla v  \, w \,   d \gamma
\end{align}
which together with (\ref{eq:appendix-b}) concludes the derivation of Green's formula on the surface $\omega$.

\paragraph{The General Case.} A more general approach which also holds 
in higher dimension is to use the identity 
\begin{equation}
|A + a \otimes b| = |A| + |A| a\cdot A^{-1} \cdot b
\end{equation}
where $A$ is a square $n \times n$ matrix and $a$ and $b$ are $n$ vectors. 
We then obtain
\begin{align}
|\hatG| \| \hatG^{-1} \hatnu \|_g^2 &= |\hatG| \hatnu \cdot \hatG^{-1} \cdot \hatnu
\\
&=|\hatG + \hatnu \otimes \hatnu | - |\hatG|
\end{align}
Now we may chose an orthonormal basis in $\IR^n$ consisting of 
$\hatnu$ and $n-1$ tangent vectors $\{t_i\}_{i=2}^{n}$. Let $P=I-\hatnu \otimes \hatnu$  be the projection onto the tangent plane and then we 
have the identity
\begin{align}
|\hatG + \hatnu \otimes \hatnu |
&=|\hatG| +  |P \hatG P|
\end{align}
where $|P \hatG P|$ is the $n-1$ determinant of the tangent part 
$P\hatG P$ of $\hatG$.
This follows directly from the fact that in normal-tangent 
coordinates $\hatG + \hatnu \otimes \hatnu$ takes the form
\begin{equation}
\underbrace{
\left|
\begin{matrix}
\tildeg_{11} + 1 & \tildeg_{12} &\dots &\tildeg_{1n}
\\
\tildeg_{21} & \tildeg_{22} & \dots & \tildeg_{2n}
\\
\vdots          & \vdots        & \vdots & \vdots
\\
\tildeg_{n1} & \tildeg_{n2} & \dots & \tildeg_{nn}
\end{matrix}
\right|}_{|G + \hatnu \otimes \hatnu|}
=
\underbrace{\left|
\begin{matrix}
\tildeg_{11}  & \tildeg_{12} &\dots &\tildeg_{1n}
\\
\tildeg_{21} & \tildeg_{22} & \dots & \tildeg_{2n}
\\
\vdots          & \vdots        & \vdots & \vdots
\\
\tildeg_{n1} & \tildeg_{n2} & \dots & \tildeg_{nn}
\end{matrix}
\right|}_{|G|}
+
\underbrace{
\left|
\begin{matrix}
 \tildeg_{22} & \dots & \tildeg_{2n}
\\
\vdots        & \vdots & \vdots
\\
 \tildeg_{n2} & \dots & \tildeg_{nn}
\end{matrix}
\right|
}_{|PGP|}
\end{equation}
with $\widetilde{G}$ the matrix representation of $\hatG$ in a tangent-normal coordinate system. We may thus conclude that 
\begin{align}
|\hatG|^{1/2} \| \hatG^{-1} \hatnu \|_g &=|P \hatG P|^{1/2}
\end{align}
which is the appropriate measure on $\partial \omega$. Note also that in the two dimensional case $P \hatG P $ has rank one and the determinant equals the absolute value of the scalar $\hattau \cdot \hatG \cdot \hattau$ and thus 
\begin{equation}
| P \hatG P | = |\hattau \cdot \hatG \cdot \hattau | = \| \hattau \|^2_g
\end{equation} 
which is consistent with the definition of the measure based on arclength measure.

\bibliographystyle{abbrv}
\bibliography{referencesCutFEM}

\end{document}